\DeclareMathAlphabet{\mathpzc}{OT1}{pzc}{m}{it}
\DeclareMathAlphabet{\mathbf}{OT1}{cmr}{bx}{it}
\newcommand{\R}{\mathbb{R}}
\renewcommand{\P}{\mathbb{P}}
\newcommand{\E}{\mathbb{E}}
\newcommand{\bA}{\mathbf{A}}
\let\oldexample\example
\renewcommand{\example}{\oldexample\normalfont}
\newtheorem{remarksimple}[theorem]{Remark}
\let\oldremarksimple\remarksimple
\renewcommand{\remarksimple}{\oldremarksimple\normalfont}
\newtheorem{experiment}[theorem]{Example}
\let\oldexperiment\experiment
\renewcommand{\experiment}{\oldexperiment\normalfont}
\newcommand{\REV}[1]{\textcolor{black}{#1}}
\title{Row-aware Randomized SVD with applications}
\author{Davide Palitta\thanks{Dipartimento di Matematica, (AM)$^2$, Alma Mater Studiorum - Università di Bologna, 40126 Bologna, Italy, \texttt{\{davide.palitta,sascha.portaro\}@unibo.it}} \and Sascha Portaro\footnotemark[1]}
\date{\today}
\begin{document}

\renewcommand{\thefootnote}{\fnsymbol{footnote}}
\maketitle \pagestyle{myheadings} \thispagestyle{plain}
\markboth{D.\ PALITTA, S.\ PORTARO}{ROW-AWARE RSVD WITH APPLICATIONS}

\begin{abstract}
The randomized singular value decomposition proposed in~\cite{Roketal2010} has certainly become one of the most well-established randomization-based algorithms in numerical linear algebra. The key ingredient of the entire procedure is the computation of a subspace which is close to the column space of the target matrix $\bA$ up to a certain probabilistic confidence. In this paper we employ a modification to the standard randomized SVD procedure which leads, in general, to better approximations to $\text{Range}(\bA)$ at the same computational cost. To this end, we explicitly construct information from the row space of $\bA$ enhancing the quality of the approximation. \REV{We derive novel error bounds which improve over existing results for $\bA$ having important gaps in its singular values.} We also observe that very few pieces of information from $\text{Range}(\bA^T)$ may be necessary. We thus design a variant of this algorithm equipped with a subsampling step which largely increases the efficiency of the procedure while often attaining competitive accuracy records.
Our findings are supported by both theoretical analysis and numerical results. 
\end{abstract}

\begin{keywords}
Randomized SVD, sketching, subspace embedding, dimension reduction.
\end{keywords}

\begin{AMS}
 	65F55, 68W20.
\end{AMS}

\maketitle

\section{Introduction}
For the last decade numerical linear algebra has been experiencing a drastic change due to the establishment of sound randomization-based algorithms. Nowadays, randomized schemes are often preferred over their deterministic counterparts thanks to the low computational cost of the former ones and their ability in achieving accurate results up to a certain probabilistic confidence.

One of the most well-established randomized procedures is certainly the Randomized Singluar Value Decomposition (RSVD) proposed in~\cite{Roketal2010} and popularized by Halko and co-authors in~\cite{Halko2010}. In spite of its simple appearance (see~\cite[Section $1.6$]{Halko2010} and Algorithm~\ref{range_finder}), this algorithm is extremely powerful in providing cheap, yet accurate, low-rank approximations to a given matrix $\mathbf{A}\in\mathbb{R}^{m\times n}$.
The success of this procedure relies on its capability in capturing the main directions of $\text{Range}(\mathbf{A})$ by simply applying $\mathbf{A}$ to a so-called \emph{sketching} matrix $\mathbf{\Omega}$. Given a target rank $k$ and an oversampling parameter $\ell$,  $\mathbf{\Omega}$ is defined as a $n\times (k+\ell)$ matrix whose randomized nature is key for the performance of the overall procedure. Different options for selecting $\mathbf{\Omega}$ can be found in the literature but Gaussian matrices~\cite{Halko2010,MARTINSSON201147,Troppetal2017} and subsampled trigonometric transformations~\cite{Ailon2006,Ailon2009,Boutsidis2013,cohen_et_al:LIPIcs.ICALP.2016.11,Halko2010,Tropp2011,Troppetal2017,Woodruff2014} are probably the most common alternatives. 
In general, the use of Gaussian matrices leads to stronger theoretical guarantees at a larger computational cost. On the other hand, subsampled trigonometric transformations are able to drastically reduce the cost of performing $\mathbf{A\Omega}$ but only weaker accuracy results can often be obtained. 

The goal of this paper is to improve over the classic RSVD algorithm by designing novel procedures which explicitly construct information also from the \emph{row} space of $\mathbf{A}$ and not only from its column space as it is usually done. Our first contribution is \REV{the study of} the Row-aware Randomized Singular Value Decomposition (R-RSVD) which is able to provide better approximations to $\text{Range}(\mathbf{A})$ while maintaining the same computational cost of standard RSVD implementations. 
\REV{While this algorithm can be seen as a special case of existing procedures (see~\cite[Algorithm 1]{Bjarkason2019} applied to $\bA^T$ for $\nu=2$), we derive novel error bounds that can be tighter than the results available in the literature in certain scenarios.}
R-RSVD inspires the design of the Subsampled Row-aware Randomized Singular Value Decomposition (Rsub-RSVD) that, in addition to remarkably decrease the computational cost, is able to attain accuracy levels similar to the ones achieved by the RSVD in many cases. 

To illustrate the competitiveness of our schemes we consider two diverse application settings which share the computation of (approximate) singular vectors as intermediate step. In particular, we explore the computation of CUR decompositions by the CUR-DEIM method~\cite{DEIM_CUR} and the construction of reduced order models in the L\"owner framework, a popular data-driven model order reduction technique; see, e.g.,~\cite[Chapter 8]{Loewner}.

The need of computing meaningful approximate singular vectors in this kind of applications explains our interest in SVD-like approximations. However, different randomized algorithms for low-rank approximations with non orthogonal factors have been proposed as well~\cite{nystrom,genNystrom,HARBRECHT2012428,CortinovisColumnSelect}; see also the recent survey~\cite{TroppSurvey} and the references therein. The (generalized) Nystr\"{o}m method~\cite{nystrom,genNystrom} is one of the most celebrated representatives in this class of schemes. In general, the generalized Nystr\"{o}m method achieves slightly less accurate results than the ones given by RSVD. On the other hand, thanks to the lack of any orthogonalization step, generalized Nystr\"{o}m is extremely fast in practice; see, e.g.,~\cite[Section 7]{genNystrom}.

Here is a synopsis of the paper. In section~\ref{Background material and related works} we recall the classic RSVD, some of its properties, and recent related works. Sections~\ref{Row-aware randomized SVD} and~\ref{Subsampled row-aware randomized SVD} see the main contributions of this paper. In particular, the R-RSVD and Rsub-RSVD algorithms and the corresponding analysis are presented in these sections. The application settings we are interested in are introduced in section~\ref{Applications}. In particular, the use of R-RSVD and Rsub-RSVD within the CUR-DEIM algorithm and the L\"{o}wner framework is discussed in section~\ref{DEIM induced CUR}
and~\ref{The Loewner framework}, respectively. The paper ends with section~\ref{Conclusions} where some conclusions are drawn, and Appendix A and B where we report some technical lemmas used to derive the theoretical results presented in the previous sections.

Throughout the paper we adopt the following notation. $\|\bA\|_F$ and $\|\bA\|_2$ represent the Frobenius and spectral norm of the matrix $\bA$, respectively. The subscript is omitted whenever the nature of the adopted norm is not relevant.
Given a random variable $\mathbf{X}$, $\mathbb{E}[\mathbf{X}]$ denotes its expected value.

\section{Background material and related works}~\label{Background material and related works}
In~\cite{Roketal2010,Halko2010} an efficient algorithm to compute the RSVD of a matrix $\mathbf{A} \in \R^{m \times n}$, with $m \ge n$, has been developed. Given a target rank $k \ll \min\{m,n\}$ and an oversampling parameter $\ell$, this algorithm first computes an orthogonal basis of a $(k+\ell)$-dimensional space, $\text{Range}(\mathbf{Q})$, which well approximates $\text{Range}(\mathbf{A})$. Then, the (standard) SVD of the projection of $\mathbf{A}$ onto $\text{Range}(\mathbf{Q})$, namely $\mathbf{W\Sigma V}^T=\mathbf{Q}^T\mathbf{A}\in\mathbb{R}^{(k+\ell)\times n}$, is computed. The RSVD of $\mathbf{A}$ is thus given by $(\mathbf{QW})\mathbf{\Sigma V}^T\approx \mathbf{A}$.
The overall algorithm is given in Algorithm~\ref{range_finder} where we also report the main cost of each line in terms of number of floating point operations (flops), assuming $\mathbf{\Omega}$ to be a Gaussian matrix. In this case, performing $\mathbf{A\Omega}$ in line~\ref{alg1_line1} of Algorithm~\ref{range_finder} costs $\mathcal{O}(\text{nnz}(\mathbf{A})(k+\ell))$ flops with additional $\mathcal{O}(m(k+\ell)^2)$ flops coming from its skinny QR factorization if this is carried out by, e.g., the Gram-Schmidt procedure. In line~\ref{alg1_line2} we need to first compute $\mathbf{Q}^T\mathbf{A}$ ($\mathcal{O}(\text{nnz}(\mathbf{A})(k+\ell))$ flops) and then its SVD. The latter is usually performed in a two-step fashion. First, the skinny QR factorization of the $(k+\ell)\times n$ matrix 
$\mathbf{Q}^T\mathbf{A}$ is computed ($\mathcal{O}(n(k+\ell)^2)$ flops) followed by the SVD of the resulting $(k+\ell)\times (k+\ell)$ triangular factor ($\mathcal{O}((k+\ell)^3)$ flops). The right singular vectors collected in $\mathbf{V}$ are then retrieved by performing other $\mathcal{O}(n(k+\ell)^2)$ flops. In a similar manner we construct 
$\mathbf{U}$ in line~\ref{alg1_line3}.

\begin{algorithm}[t!]
    \caption{Randomized Singular Value Decomposition~\cite{Halko2010}}\label{range_finder}
    \begin{algorithmic}[1]
        \Require $\mathbf{A}\in\mathbb{R}^{m\times n}$, $k,\ell>0$.
        \Ensure Orthogonal matrices $\mathbf U\in\mathbb{R}^{m\times (k+\ell)}$, $\mathbf V\in\mathbb{R}^{ (k+\ell)\times n}$, and a diagonal matrix $\mathbf{\Sigma}\in\mathbb{R}^{(k+\ell)\times (k+\ell)}$ s.t. $\mathbf{U\Sigma V}^T\approx\mathbf{A}$.
        \State Generate a random sketch matrix $\mathbf{\Omega} \in \R^{n \times (k+\ell)}$
        \State Compute skinny QR: $\mathbf{QR} = \mathbf{A} \mathbf{\Omega}$ \label{alg1_line1}\Comment{$\mathcal{O}(\text{nnz}(\mathbf{A})(k+\ell)+m(k+\ell)^2)$}
        \State Compute SVD: $\mathbf{W\Sigma V}^T=\mathbf{Q}^T\mathbf{A}$\label{alg1_line2} \Comment{$\mathcal{O}(\text{nnz}(\mathbf{A})(k+\ell)+2n(k+\ell)^2+(k+\ell)^3)$}
        \State Set $\mathbf{U}=\mathbf{QW}$ \label{alg1_line3}\Comment{$\mathcal{O}(m(k+\ell)^2)$}

    \end{algorithmic}
\end{algorithm}

In~\cite{Halko2010}, the authors provided estimates for the expected approximation error along with probabilistic bounds for the error obtained by Algorithm~\ref{range_finder}, both in the spectral and Frobenius norm. By adopting the partition below
\begin{equation}\label{partition}
    \begin{array}{cccc}
     & \displaystyle k \quad n-k& n   \\
     \bA = \mathbf{U}&   {\begin{bmatrix} \mathbf{\Sigma}_{1} \\ & \mathbf{\Sigma}_{2} \end{bmatrix}} & {\begin{bmatrix} \mathbf{V}_{1}^T \\ \mathbf{V}_{2}^T \end{bmatrix}}   & {\begin{matrix} k  \\ n-k \end{matrix}}
\end{array}
\end{equation}
with $\mathbf{\Sigma}_{1}\in\mathbb{R}^{k\times k}$ and $ \mathbf{\Sigma}_{2}\in\mathbb{R}^{(n-k)\times (n-k)}$, the following results have been derived.

\begin{theorem}[Average error~{\cite[Theorems $10.5$ and $10.6$]{Halko2010}}]\label{Th1_RSVD}
    Suppose that $\bA$ is a real $m \times n$ matrix, $m\geq n$, with singular values $\sigma_1 \ge \dots \ge \sigma_n\ge 0$ partitioned as in~\eqref{partition}. Given a target rank $k \ge 2$, an oversampling parameter $\ell \ge 2$ satisfying $k + \ell \leq n$, and a $n \times (k + \ell)$ standard Gaussian matrix $\mathbf{\Omega}$, the orthogonal matrix $\mathbf{Q}\in\mathbb{R}^{m\times (k+\ell)}$ computed by Algorithm~\ref{range_finder} is such that 
    \begin{align*}
        &\E \left [ \| \bA- \mathbf{QQ}^T\bA \|_F \right ] \leq \left ( 1 + \frac{k}{\ell-1} \right )^{\frac{1}{2}} \|\mathbf{\Sigma}_2\|_F, \\ &\E \left [ \| \bA- \mathbf{QQ}^T \bA \|_2 \right ] \leq  \left ( 1 + \sqrt{\frac{k}{\ell-1}} \right ) \|\mathbf{\Sigma}_2\|_2 + \frac{\mathtt e \sqrt{k+\ell}}{\ell} \|\mathbf{\Sigma}_2\|_F.
    \end{align*}
    where $\mathtt e$ denotes the Napier's constant.
\end{theorem}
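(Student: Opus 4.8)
The plan is to follow the classical two-stage argument behind bounds of this kind: first extract a purely \emph{deterministic} estimate for the projection error in terms of how the sketch $\mathbf{\Omega}$ interacts with the leading and trailing right singular subspaces of $\bA$, and then take expectations, exploiting the rotational invariance of a Gaussian sketch.

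First I would fix the SVD $\bA=\mathbf{U}\mathbf{\Sigma}\mathbf{V}^T$ with the block partition in~\eqref{partition}, set $\mathbf{\Omega}_1:=\mathbf{V}_1^T\mathbf{\Omega}\in\mathbb{R}^{k\times(k+\ell)}$ and $\mathbf{\Omega}_2:=\mathbf{V}_2^T\mathbf{\Omega}\in\mathbb{R}^{(n-k)\times(k+\ell)}$, and use that $\text{Range}(\mathbf{Q})=\text{Range}(\bA\mathbf{\Omega})$. The structural ingredient is the deterministic bound
\[
\|(\mathbf{I}-\mathbf{QQ}^T)\bA\|^2 \;\le\; \|\mathbf{\Sigma}_2\|^2 + \|\mathbf{\Sigma}_2\mathbf{\Omega}_2\mathbf{\Omega}_1^\dagger\|^2,
\]
valid for every unitarily invariant norm whenever $\mathbf{\Omega}_1$ has full row rank. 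I would prove it by exhibiting an explicit matrix lying in $\text{Range}(\bA\mathbf{\Omega})$ that realizes the right-hand side and invoking the fact that $\mathbf{QQ}^T\bA$ is the best approximation to $\bA$ with column space contained in that subspace. I expect this to be the delicate step: a cruder choice of comparison matrix still gives \emph{a} bound but degrades the final constants.

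Second, since $\mathbf{V}$ is orthogonal and $\mathbf{\Omega}$ is standard Gaussian, the blocks $\mathbf{\Omega}_1,\mathbf{\Omega}_2$ are independent standard Gaussian matrices and $\mathbf{\Omega}_1$ has full row rank almost surely. Conditioning on $\mathbf{\Omega}_1$, I would apply the elementary Gaussian moment facts that for fixed $\mathbf{S},\mathbf{T}$ and a standard Gaussian $\mathbf{G}$ one has $\mathbb{E}\|\mathbf{S}\mathbf{G}\mathbf{T}\|_F^2=\|\mathbf{S}\|_F^2\|\mathbf{T}\|_F^2$ and $\mathbb{E}\|\mathbf{S}\mathbf{G}\mathbf{T}\|_2\le\|\mathbf{S}\|_2\|\mathbf{T}\|_F+\|\mathbf{S}\|_F\|\mathbf{T}\|_2$, taken with $\mathbf{S}=\mathbf{\Sigma}_2$, $\mathbf{G}=\mathbf{\Omega}_2$, $\mathbf{T}=\mathbf{\Omega}_1^\dagger$. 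This reduces the whole problem to controlling moments of the pseudoinverse of the $k\times(k+\ell)$ Gaussian $\mathbf{\Omega}_1$, namely $\mathbb{E}\|\mathbf{\Omega}_1^\dagger\|_F^2=k/(\ell-1)$ (the trace of an inverse Wishart matrix, which is where $\ell\ge2$ enters) and $\mathbb{E}\|\mathbf{\Omega}_1^\dagger\|_2\le\mathtt e\sqrt{k+\ell}/\ell$; both are among the technical lemmas in the Appendix.

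To close, for the Frobenius estimate I would take expectations in the deterministic bound, insert $\mathbb{E}\|\mathbf{\Omega}_1^\dagger\|_F^2$, and apply Jensen's inequality $\mathbb{E}\|X\|\le(\mathbb{E}\|X\|^2)^{1/2}$. For the spectral estimate I would first use $\sqrt{a^2+b^2}\le a+b$ to pass from the deterministic bound to $\|(\mathbf{I}-\mathbf{QQ}^T)\bA\|_2\le\|\mathbf{\Sigma}_2\|_2+\|\mathbf{\Sigma}_2\mathbf{\Omega}_2\mathbf{\Omega}_1^\dagger\|_2$, then take expectations, combine the conditional Gaussian bound with $\mathbb{E}\|\mathbf{\Omega}_1^\dagger\|_F\le(\mathbb{E}\|\mathbf{\Omega}_1^\dagger\|_F^2)^{1/2}$ for the first term and the spectral pseudoinverse moment for the second. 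Beyond the deterministic bound, the only genuinely non-elementary input is $\mathbb{E}\|\mathbf{\Omega}_1^\dagger\|_2\le\mathtt e\sqrt{k+\ell}/\ell$, which rests on tail bounds for the least singular value of a Gaussian matrix; I would quote it from the Appendix rather than reprove it here.
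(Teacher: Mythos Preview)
Your proposal is correct and follows exactly the classical Halko--Martinsson--Tropp argument. Note that the paper does not actually prove Theorem~\ref{Th1_RSVD}: it is quoted as background from~\cite{Halko2010}, and the technical ingredients you invoke (the deterministic bound, the scaled-Gaussian moments, and the pseudoinverse moments) are precisely the Appendix lemmas the paper imports from that reference. Your outline also matches, step for step, the template the paper uses in its own proof of Theorem~\ref{teo1} for the R-RSVD variant, so there is nothing to add.
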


\begin{theorem}[Deviation bounds~{\cite[Theorems $10.7$ and $10.8$]{Halko2010}}]\label{them_RSVD}
    Under the same hypotheses of Theorem~\ref{Th1_RSVD}, given $u,t\ge 1$, it holds that
    \begin{align*}
    \begin{split}
        \| \bA- \mathbf{QQ}^T \bA \|_F \leq \left ( 1 + t \sqrt{\frac{3 k}{\ell+1}} \right ) \|\mathbf{\Sigma}_2\|_F + ut \frac{\mathtt e \sqrt{k+\ell}}{\ell+1} \|\mathbf{\Sigma}_2\|_2,
    \end{split}
    \\
    \begin{split}
      \| \bA- \mathbf{QQ}^T  \bA \|_2 \leq  \left ( 1 + t \sqrt{\frac{3 k}{\ell+1}} +ut \frac{\mathtt e \sqrt{k+\ell}}{\ell+1} \right ) \|\mathbf{\Sigma}_2\|_2 + t \frac{\mathtt e \sqrt{k+\ell}}{\ell+1} \|\mathbf{\Sigma}_2\|_F, \\ 
    \end{split}
    \end{align*}
    with failure probability at most $2t^{-\ell} + \mathtt e^{-\frac{u^2}{2}}$.
\end{theorem}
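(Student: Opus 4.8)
The plan is to follow the two-stage strategy (a deterministic structural estimate, then a probabilistic analysis) that underlies the RSVD error analysis in~\cite{Halko2010}. Writing $\bA=\mathbf{U}\mathbf{\Sigma}\mathbf{V}^T$ as in~\eqref{partition}, I would introduce $\mathbf{\Omega}_1=\mathbf{V}_1^T\mathbf{\Omega}\in\R^{k\times(k+\ell)}$ and $\mathbf{\Omega}_2=\mathbf{V}_2^T\mathbf{\Omega}$. Since $\mathbf{\Omega}$ is standard Gaussian and $[\mathbf{V}_1\ \mathbf{V}_2]$ is orthogonal, $\mathbf{\Omega}_1$ and $\mathbf{\Omega}_2$ are independent standard Gaussian matrices, and $\mathbf{\Omega}_1$ has full row rank almost surely. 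The starting point is the deterministic structural bound of~\cite{Halko2010}, namely
\begin{equation*}
\norm{\bA-\mathbf{Q}\mathbf{Q}^T\bA}\le\norm{\mathbf{\Sigma}_2}+\norm{\mathbf{\Sigma}_2\mathbf{\Omega}_2\mathbf{\Omega}_1^{\dagger}},
\end{equation*}
valid in both the Frobenius and the spectral norm. Everything then reduces to producing a high-probability tail bound for the single random quantity $E:=\norm{\mathbf{\Sigma}_2\mathbf{\Omega}_2\mathbf{\Omega}_1^{\dagger}}$, since the leading $\norm{\mathbf{\Sigma}_2}$ already accounts for the "$1\cdot\norm{\mathbf{\Sigma}_2}$" term in each displayed estimate.

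Next I would condition on $\mathbf{\Omega}_1$ and exploit the Gaussianity of $\mathbf{\Omega}_2$. As a function of $\mathbf{\Omega}_2$, the map $\mathbf{G}\mapsto\norm{\mathbf{\Sigma}_2\mathbf{G}\mathbf{\Omega}_1^{\dagger}}$ is Lipschitz with constant $\norm{\mathbf{\Sigma}_2}_2\norm{\mathbf{\Omega}_1^{\dagger}}_2$, so concentration of measure for Lipschitz functions of a Gaussian matrix gives
\begin{equation*}
E\le\E_{\mathbf{\Omega}_2}[E]+u\,\norm{\mathbf{\Sigma}_2}_2\norm{\mathbf{\Omega}_1^{\dagger}}_2
\end{equation*}
with conditional failure probability at most $\mathtt e^{-u^2/2}$, while the conditional mean is controlled by $\E_{\mathbf{\Omega}_2}[E]\le\norm{\mathbf{\Sigma}_2}_F\norm{\mathbf{\Omega}_1^{\dagger}}_F$ in the Frobenius case and by $\E_{\mathbf{\Omega}_2}[E]\le\norm{\mathbf{\Sigma}_2}_2\norm{\mathbf{\Omega}_1^{\dagger}}_F+\norm{\mathbf{\Sigma}_2}_F\norm{\mathbf{\Omega}_1^{\dagger}}_2$ in the spectral case. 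At this point $E$ is bounded in terms of just the two scalar random variables $\norm{\mathbf{\Omega}_1^{\dagger}}_F$ and $\norm{\mathbf{\Omega}_1^{\dagger}}_2$.

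The technical heart of the argument — and the step I expect to be the main obstacle — is controlling the pseudoinverse of the flat $k\times(k+\ell)$ Gaussian matrix $\mathbf{\Omega}_1$, for which I would establish tail estimates of the form $\P\{\norm{\mathbf{\Omega}_1^{\dagger}}_F\ge t\sqrt{3k/(\ell+1)}\}\le t^{-\ell}$ and $\P\{\norm{\mathbf{\Omega}_1^{\dagger}}_2\ge t\,\mathtt e\sqrt{k+\ell}/(\ell+1)\}\le t^{-(\ell+1)}$, valid for $t\ge 1$ and $\ell\ge 2$. The Frobenius bound follows because $\norm{\mathbf{\Omega}_1^{\dagger}}_F^2$ is distributed as the trace of an inverse Wishart matrix — a sum of reciprocal chi-square variables — whose moments yield the tail via Markov's inequality. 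The spectral bound is equivalent to a lower tail estimate for $\sigma_{\min}(\mathbf{\Omega}_1)$, the smallest singular value of a rectangular Gaussian matrix; this is the genuinely delicate ingredient and the part that most repays care.

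To conclude, I would combine the pieces by a union bound over three bad events: $E$ exceeding its conditional concentration bound (probability at most $\mathtt e^{-u^2/2}$), $\norm{\mathbf{\Omega}_1^{\dagger}}_F$ being large (probability at most $t^{-\ell}$), and $\norm{\mathbf{\Omega}_1^{\dagger}}_2$ being large (probability at most $t^{-(\ell+1)}\le t^{-\ell}$, since $t\ge1$), so that outside a set of probability at most $2t^{-\ell}+\mathtt e^{-u^2/2}$ all three estimates hold simultaneously. On the complementary event, substituting the worst-case scalar values into the bound for $E$, adding back the $\norm{\mathbf{\Sigma}_2}$ term from the structural inequality, and collecting the resulting coefficients of $\norm{\mathbf{\Sigma}_2}_F$ and $\norm{\mathbf{\Sigma}_2}_2$ reproduces exactly the two displayed inequalities; the Frobenius and spectral statements differ only through the two expressions for $\E_{\mathbf{\Omega}_2}[E]$ recorded in the second step.
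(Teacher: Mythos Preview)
The paper does not actually prove Theorem~\ref{them_RSVD}; it is quoted directly from~\cite[Theorems~10.7--10.8]{Halko2010} as background. That said, your outline is correct and is precisely the argument used in~\cite{Halko2010}, and it also coincides step-for-step with the paper's own proof of the analogous deviation bound for the R-RSVD, Theorem~\ref{teo_Frobenius_error}: deterministic structural bound (Theorem~\ref{teo_det_err}), condition on $\mathbf{\Omega}_1$, apply Gaussian concentration (Proposition~\ref{concentration_functions}) with the Lipschitz constant $\|\mathbf{\Sigma}_2\|_2\|\mathbf{\Omega}_1^\dagger\|_2$, bound the conditional mean via Proposition~\ref{Expected_norm_Gaussian_matrix}, invoke the pseudoinverse tail bounds of Proposition~\ref{bound_pseudoinverse_Gaussian_matrix}, and remove the conditioning by a union bound. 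The only cosmetic difference is that the paper packages the two pseudoinverse tail events into a single event $E_t$ before conditioning, whereas you union-bound over three events at the end; the arithmetic is identical.

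One small technical slip: the pseudoinverse tail bounds (Proposition~\ref{bound_pseudoinverse_Gaussian_matrix}) require $\ell\ge 4$, not $\ell\ge 2$ as you wrote, and indeed the paper's Theorem~\ref{teo_Frobenius_error} carries this extra hypothesis explicitly. This is a hypothesis mismatch rather than a flaw in your strategy.
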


In the next section we employ a different scheme for computing a randomized SVD of a given matrix $\mathbf{A}$. We will show that this procedure is able to compute better approximations to $\text{Range}(\mathbf{A})$ while maintaining the same exact computational cost of Algorithm~\ref{range_finder}. The accuracy of RSVD can be improved by combining the algorithm with a power iteration. In particular, at the cost of multiplying by $(\bA \bA^T)^q$, the leading constant in the error bound decreases exponentially fast as the power $q$ increases.

We conclude this section by recalling some recent work related to the RSVD.
In particular, in~\cite{Troppetal2017} it has been proposed to modify the standard RSVD algorithm by introducing two sketching matrices $\mathbf{\Omega}\in\mathbb{R}^{n\times(k+\ell)}$ and $\mathbf{\Psi}\in\mathbb{R}^{t\times m}$, $t\geq k+\ell$. Once the sketchings $\mathbf{Y}=\mathbf{A\Omega}$, $\mathbf{W}=\mathbf{\Psi A}$ are computed, one constructs the skinny QR factorization of $\mathbf{Y}$, namely $\mathbf{QR}=\mathbf{Y}$, and computes the SVD of the $t\times n$ matrix $(\mathbf{\Psi Q})^\dagger\mathbf{W}$, i.e., $(\mathbf{\Psi Q})^\dagger\mathbf{W}=\mathbf{Z\Sigma V}^T$. Here $\dagger$ denotes the Moore–Penrose pseudoinverse of a matrix. The algorithm concludes by defining the approximate RSVD of $\mathbf{A}$ as $(\mathbf{QZ})\mathbf{\Sigma V}^T$; see~\cite[Algorithm 7]{Troppetal2017} for more details. The main advantage of this scheme over Algorithm~\ref{range_finder} is the streaming nature of the former, namely it is able to handle data models where $\mathbf{A}$ is given as a set of linear updates which are available one at a time and that cannot be revisited after they have been processed; see, e.g.,~\cite{DataStreams} for more details on data streams.

Similarly to our procedures, also~\cite[Algorithm 7]{Troppetal2017} explicitly constructs information from the row space of $\mathbf{A}$ through the sketching $\mathbf{W}=\mathbf{\Psi A}$. However, as we will illustrate in the next sections, our schemes do not see the computation of any possibly ill-conditioned pseudoinverse and a single sketching matrix needs to be drawn.

\section{Row-aware randomized SVD}~\label{Row-aware randomized SVD}
In this section we introduce the Row-aware randomized SVD. As mentioned before, the main goal is to construct information coming from the row space of $\bA$ without increasing the computational cost of the standard RSVD.

Given a target rank $k$ and an oversampling parameter $\ell>0$, we start our algorithm by drawing a sketching matrix $\mathbf{\Omega}\in\mathbb{R}^{m\times (k+\ell)}$.
Notice that, in contrast to the standard RSVD where 
$\mathbf{\Omega}\in\mathbb{R}^{n\times (k+\ell)}$, our sketching matrix has $m$ rows. Indeed, we are going to use $\mathbf{\Omega}$ to sketch $\bA^T$ in place of $\bA$.

Let $\mathbf{P} \mathbf{T} = \bA^T \mathbf{\Omega}$ be the thin QR factorization of $\bA^T \mathbf{\Omega}$. Then, following the reasoning of the standard RSVD, $\text{Range}(\mathbf{P})$ is a good approximation to the row space of $\bA$ and we are going to use this matrix to ``sketch'' $\bA$. Therefore, we compute a second skinny QR factorization, $\mathbf{QR}=\bA\mathbf{P}$, and compute the SVD of the small dimensional matrix $\mathbf{R}\in\mathbb{R}^{(k+\ell)\times(k+\ell)}$, namely $\mathbf{R}=\mathbf{W\Sigma X}^T$. The final approximation we compute is thus given by $(\mathbf{QW})\mathbf{\Sigma}(\mathbf{PX})^T\approx\bA$. The overall algorithm, along with the computational cost per step, is illustrated in Algorithm~\ref{alg1}.

\begin{algorithm}[t!]
    \caption{Row-aware Randomized SVD (R-RSVD)}\label{alg1}
    \begin{algorithmic}[1]
           \Require $\mathbf{A}\in\mathbb{R}^{m\times n}$, $k,\ell>0$.
        \Ensure Orthogonal matrices $\mathbf U\in\mathbb{R}^{m\times (k+\ell)}$, $\mathbf V\in\mathbb{R}^{ (k+\ell)\times n}$, and a diagonal matrix $\mathbf{\Sigma}\in\mathbb{R}^{(k+\ell)\times (k+\ell)}$ s.t. $\mathbf{U\Sigma V}^T\approx\mathbf{A}$.
        \State Generate a random sketch matrix $\mathbf{\Omega} \in \R^{m \times (k+\ell)}$
        \State Compute skinny QR: $\mathbf{PT} = \mathbf{A}^T \mathbf{\Omega}$ \label{alg1_1_line1}\Comment{$\mathcal{O}(\text{nnz}(\mathbf{A})(k+\ell)+n(k+\ell)^2)$}
        \State Compute skinny QR: $\mathbf{QR} = \mathbf{A} \mathbf{P}$ \label{alg1_1_line1bis}\Comment{$\mathcal{O}(\text{nnz}(\mathbf{A})(k+\ell)+m(k+\ell)^2)$}
        \State Compute SVD: $\mathbf{W\Sigma X}^T=\mathbf{R}$\label{alg1_1_line2} \Comment{$\mathcal{O}((k+\ell)^3)$}
        \State Set $\mathbf{U}=\mathbf{QW}$, $\mathbf{V}=\mathbf{PX}$ \label{alg1_1_line3}\Comment{$\mathcal{O}((m+n)(k+\ell)^2)$}
    \end{algorithmic}
\end{algorithm}

The first thing to notice by looking at Algorithm~\ref{alg1} is that our new R-RSVD has the same asymptotic cost of Algorithm~\ref{range_finder}. 
Indeed, we basically perform the same exact operations, but in a different order. \REV{In particular, we can look at 
Algorithm~\ref{alg1} as Algorithm~\ref{range_finder} applied to $\mathbf{A}^T$ in place of $\mathbf{A}$.
This equivalency implies that the two schemes achieve similar SVD-errors. Indeed, in Algorithm~\ref{alg1} this error is mainly driven by the quality of $\mathbf{P}$ in approximating the row space of $\bA$, unless a further multiplication with $\bA$ of the form $\mathbf{Q}^T\bA$ is allowed.
The SVD-error fulfilled by the approximation provided by Algorithm~\ref{alg1} has the same nature of the one attained by Algorithm~\ref{range_finder}. Indeed, the former is of the form}
\REV{$$\|\bA-\mathbf{QRP}^T\|=\|\bA-\mathbf{APP}^T\|,$$}
\REV{to which the bounds in Theorem~\ref{them_RSVD} can be applied.}
\REV{Nevertheless, whenever $\mathbf{Q}$ is computed by Algorithm~\ref{alg1}, $\text{Range}(\mathbf{Q})$ is in general a better approximation to $\text{Range}(\bA)$ than the approximate space provided by the standard RSVD. This feature should be seen as a by-product of our procedure, which can be particularly appealing in applications where certain quantities need to be explicitly projected onto $\text{Range}(\bA)$ or an approximation thereof.}
\REV{This was somehow already observed in \cite{Bjarkason2019}.
Indeed, Algorithm~\ref{alg1} can be seen as a special case of~\cite[Algorithm~1]{Bjarkason2019} applied to $\bA^T$, in place of $\bA$, by setting $\nu=2$. However, our analysis leads to novel bounds on the quality of the computed $\mathbf{Q}$ that can be sharper than the ones provided in~\cite[Section 4.2.2]{Bjarkason2019}; see Example~\ref{Ex1}.}

{To state our theoretical bounds, we first need to define some preliminary quantities. By setting $\mathbf{G} := \mathbf{RT}$, we can easily show that $\mathbf{QG}$ is the QR decomposition of $\bA \bA^T \mathbf{\Omega}$. We can then decompose the matrix $\mathbf{H} := \bA^T \mathbf{\Omega} = \mathbf{V \Sigma} \mathbf{U}^T \mathbf{\Omega}$ in the coordinate system determined by the right unitary factor $\mathbf{V}$ of $\bA$. In particular, by following the partition in~\eqref{partition}, we have

\begin{equation}\label{setup_1}
    \mathbf{V}^T\mathbf{H}= \begin{bmatrix}
    \mathbf{V}_1^T\mathbf{H}\\
    \mathbf{V}_2^T\mathbf{H}\\
    \end{bmatrix}=\begin{bmatrix}
    \mathbf{V}_1^T\bA^T \mathbf{\Omega}\\
    \mathbf{V}_2^T\bA^T \mathbf{\Omega}\\
    \end{bmatrix}=
      \begin{bmatrix}
          \mathbf{\Sigma}_{1} \mathbf{U}_1^T \mathbf{\Omega}\\
          \mathbf{\Sigma}_{2} \mathbf{U}_2^T \mathbf{\Omega}\\
      \end{bmatrix}=: \begin{bmatrix}
  \mathbf{H}_1\\
  \mathbf{H}_2
      \end{bmatrix} .
\end{equation}}

We are now ready to prove the following results.

\begin{theorem}[Average Frobenius and spectral errors]\label{teo1}
    Suppose that $\bA$ is a real $m \times n$ matrix, $m \geq n$, with singular values $\sigma_1 \ge  \dots \geq \sigma_n\ge 0$. Choose a target rank $k \ge 2$ and an oversampling parameter $\ell \ge 2$, where $k + \ell \leq n$. {Assume $\text{rank}(\bA)\geq k$.} Draw an $m \times (k + \ell)$ standard Gaussian matrix $\mathbf{\Omega}$, and let $\mathbf{Q}$ be computed by Algorithm~\ref{alg1}. Then it holds
    \begin{equation}\label{Thm1:bound1}
        \E \left [ \| \bA-\mathbf{QQ}^T\bA \|_F \right ] \leq \left ( 1 + \frac{\sigma_{k+1}^2}{\sigma_{k}^2} \cdot\frac{k}{\ell -1} \right )^{\frac{1}{2}} \| \mathbf{\Sigma}_2 \|_F,
    \end{equation}
    and
    \begin{equation}\label{Thm1:bound2}
        \E \left [ \| \bA - \mathbf{QQ}^T \bA \|_2 \right ] \leq \left ( 1 + \frac{\sigma_{k+1}}{\sigma_{k}} \cdot \sqrt{\frac{k}{\ell-1}} \right ) \| \mathbf{\Sigma}_2 \|_2 +\frac{\sigma_{k+1}}{\sigma_{k}} \cdot \frac{\mathtt e \sqrt{k+\ell}}{\ell} \| \mathbf{\Sigma}_2 \|_F,
    \end{equation}
    where $\mathbf{\Sigma}_1$ and $\mathbf{\Sigma}_2$ come from the partition~\eqref{partition} of $\mathbf{\Sigma}$.
\end{theorem}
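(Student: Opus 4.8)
The plan is to reduce everything to the Halko--Martinsson--Tropp (HMT) analysis \cite{Halko2010} after a single structural observation: the space $\text{Range}(\mathbf{Q})$ produced by Algorithm~\ref{alg1} coincides with the range finder one would get by running the \emph{standard} RSVD on $\bA$ with the (non-Gaussian) test matrix $\bA^T\mathbf{\Omega}$. Indeed, assuming $\mathbf{T}$ nonsingular and $\bA\mathbf{P}$ of full column rank --- which holds almost surely whenever $\sigma_{k+\ell}>0$, the only nontrivial regime since $\sigma_k$ sits in the denominators of~\eqref{Thm1:bound1}--\eqref{Thm1:bound2} --- the identity $\mathbf{P}=\bA^T\mathbf{\Omega}\mathbf{T}^{-1}$ gives $\text{Range}(\mathbf{P})=\text{Range}(\bA^T\mathbf{\Omega})$, hence $\text{Range}(\mathbf{Q})=\text{Range}(\bA\mathbf{P})=\text{Range}(\bA\bA^T\mathbf{\Omega})$. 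Setting $\mathbf{\Omega}':=\bA^T\mathbf{\Omega}$ and invoking the deterministic bound \cite[Theorem~9.1]{Halko2010} with this test matrix --- whose hypothesis is that $\mathbf{V}_1^T\mathbf{\Omega}'$ have full row rank --- yields, for every unitarily invariant norm,
\[
\|\bA-\mathbf{QQ}^T\bA\|^2\ \le\ \|\mathbf{\Sigma}_2\|^2+\bigl\|\mathbf{\Sigma}_2\,(\mathbf{V}_2^T\mathbf{\Omega}')(\mathbf{V}_1^T\mathbf{\Omega}')^{\dagger}\bigr\|^2 .
\]
By the intermediate equality displayed in~\eqref{setup_1} we have $\mathbf{V}_i^T\mathbf{\Omega}'=\mathbf{V}_i^T\bA^T\mathbf{\Omega}=\mathbf{\Sigma}_i\mathbf{U}_i^T\mathbf{\Omega}$; writing $\mathbf{G}_i:=\mathbf{U}_i^T\mathbf{\Omega}$, the full-rank hypothesis becomes ``$\mathbf{G}_1$ has full row rank'', which holds a.s.\ because $\mathbf{G}_1\in\R^{k\times(k+\ell)}$ is a standard Gaussian matrix.

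Next I would simplify the error term. Since $\mathbf{\Sigma}_1$ is square and invertible and $\mathbf{G}_1$ has full row rank, $(\mathbf{\Sigma}_1\mathbf{G}_1)^{\dagger}=\mathbf{G}_1^{\dagger}\mathbf{\Sigma}_1^{-1}$ (reverse-order law, or directly from $(\mathbf{\Sigma}_1\mathbf{G}_1)^{\dagger}=(\mathbf{\Sigma}_1\mathbf{G}_1)^T((\mathbf{\Sigma}_1\mathbf{G}_1)(\mathbf{\Sigma}_1\mathbf{G}_1)^T)^{-1}$), so the error term equals $\|\mathbf{\Sigma}_2^{2}\mathbf{G}_2\mathbf{G}_1^{\dagger}\mathbf{\Sigma}_1^{-1}\|$. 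Peeling off $\mathbf{\Sigma}_2$ on the left and $\mathbf{\Sigma}_1^{-1}$ on the right in the spectral norm, via $\|\mathbf{X}\mathbf{Y}\mathbf{Z}\|\le\|\mathbf{X}\|_2\|\mathbf{Y}\|\|\mathbf{Z}\|_2$ and $\|\mathbf{\Sigma}_2\|_2=\sigma_{k+1}$, $\|\mathbf{\Sigma}_1^{-1}\|_2=\sigma_k^{-1}$, gives
\[
\bigl\|\mathbf{\Sigma}_2^{2}\mathbf{G}_2\mathbf{G}_1^{\dagger}\mathbf{\Sigma}_1^{-1}\bigr\|\ \le\ \frac{\sigma_{k+1}}{\sigma_k}\,\bigl\|\mathbf{\Sigma}_2\mathbf{G}_2\mathbf{G}_1^{\dagger}\bigr\| ,
\]
so $\|\bA-\mathbf{QQ}^T\bA\|^2\le\|\mathbf{\Sigma}_2\|^2+(\sigma_{k+1}^2/\sigma_k^2)\,\|\mathbf{\Sigma}_2\mathbf{G}_2\mathbf{G}_1^{\dagger}\|^2$. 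Up to the factor $\sigma_{k+1}/\sigma_k$, the right-hand side is exactly the deterministic quantity that governs standard RSVD of $\bA$ --- there one would have $\mathbf{G}_i=\mathbf{V}_i^T\mathbf{\Omega}$, without any leading $\mathbf{\Sigma}$ contraction --- and this extra factor is precisely what sharpens Theorem~\ref{Th1_RSVD} into~\eqref{Thm1:bound1}--\eqref{Thm1:bound2}.

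Finally I would take expectations. Because $\mathbf{U}=[\mathbf{U}_1\ \mathbf{U}_2\ \cdots]$ is orthogonal and $\mathbf{\Omega}$ is standard Gaussian, $\mathbf{G}_1$ and $\mathbf{G}_2$ are \emph{independent} standard Gaussian matrices, so the Gaussian expectation estimates from the proof of \cite[Theorems~10.5 and~10.6]{Halko2010} apply verbatim: $\E\bigl[\|\mathbf{\Sigma}_2\mathbf{G}_2\mathbf{G}_1^{\dagger}\|_F^2\bigr]\le\frac{k}{\ell-1}\|\mathbf{\Sigma}_2\|_F^2$ and $\E\bigl[\|\mathbf{\Sigma}_2\mathbf{G}_2\mathbf{G}_1^{\dagger}\|_2\bigr]\le\sqrt{\tfrac{k}{\ell-1}}\,\|\mathbf{\Sigma}_2\|_2+\tfrac{\mathtt e\sqrt{k+\ell}}{\ell}\,\|\mathbf{\Sigma}_2\|_F$. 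For~\eqref{Thm1:bound1} I would combine the first estimate with the deterministic inequality, take expectations, and apply Jensen ($\E\|\cdot\|_F\le(\E\|\cdot\|_F^2)^{1/2}$); for~\eqref{Thm1:bound2} I would first weaken the deterministic inequality with $\sqrt{a^2+b^2}\le a+b$, then take expectations and insert the second estimate. The main obstacle is really the opening move --- recognizing $\text{Range}(\mathbf{Q})=\text{Range}(\bA\bA^T\mathbf{\Omega})$ and, thanks to~\eqref{setup_1}, that the governing test-matrix blocks are $\mathbf{U}_i^T\mathbf{\Omega}$ rather than $\mathbf{V}_i^T\mathbf{\Omega}$ --- after which everything is careful bookkeeping, the only delicate issues being the genericity caveats ($\mathbf{T}$ nonsingular, $\bA\mathbf{P}$ full column rank, $\mathbf{G}_1$ full row rank, $\sigma_k>0$), all harmless in the regime where the bounds are finite.
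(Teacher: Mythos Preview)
Your proposal is correct and follows essentially the same route as the paper: both apply the deterministic bound \cite[Theorem~9.1]{Halko2010} with a test matrix whose $\mathbf{V}$-blocks reduce to $\mathbf{\Sigma}_i\mathbf{U}_i^T\mathbf{\Omega}$, peel off $\|\mathbf{\Sigma}_2\|_2\|\mathbf{\Sigma}_1^{-1}\|_2=\sigma_{k+1}/\sigma_k$, and then invoke the HMT Gaussian estimates on the independent blocks $\mathbf{G}_i=\mathbf{U}_i^T\mathbf{\Omega}$. The only cosmetic difference is that the paper uses $\mathbf{P}=\bA^T\mathbf{\Omega}\mathbf{T}^{-1}$ as the test matrix (so $\mathbf{T}$ appears and cancels), whereas you equivalently use $\mathbf{\Omega}'=\bA^T\mathbf{\Omega}$ directly after noting $\text{Range}(\mathbf{Q})=\text{Range}(\bA\bA^T\mathbf{\Omega})$.
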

\begin{proof}
    We first observe that the $k \times (k+\ell)$ matrix $\mathbf{H}_{1}$ in~\eqref{setup_1} has full row rank with probability one. Indeed, the $k \times (k + \ell)$ Gaussian matrix $\mathbf{U}_1^T \mathbf{\Omega}$ has full row rank with probability one {and $\mathbf{\Sigma}_1$ has full rank}. Thus, the H\"older inequality, along with Theorem~\ref{teo_det_err}, implies that
    \begin{equation}\label{Thm1:eq1}
        \E \left [ \| \bA- \mathbf{QQ}^T \bA \|_F \right ] \leq \left ( \E \left [ \| \bA- \mathbf{QQ}^T \bA \|_F^2 \right ] \right )^{\frac{1}{2}} \leq \left ( \| \mathbf{\Sigma}_2 \|_F^2 + \E \left [ \| \mathbf{\Sigma}_2 \mathbf{H}_{2} \mathbf{H}_{1}^\dagger \|_F^2 \right ] \right )^{\frac{1}{2}}.
    \end{equation}
    Now, since {$\mathbf{H}_{1}^\dagger = (\mathbf{\Sigma}_{1} \mathbf{U}_1^T \mathbf{\Omega})^\dagger = (\mathbf{U}_1^T \mathbf{\Omega})^\dagger \mathbf{\Sigma}_1^{-1}$}, by plugging the expression of $\mathbf{H}_{2}$ into~\eqref{Thm1:eq1}, we get
    \begin{align*}
        \E \left [ \| \mathbf{\Sigma}_2 \mathbf{H}_{2} \mathbf{H}_{1}^\dagger \|_F^2 \right ]  &= \E \left [ \| \mathbf{\Sigma}_2 \mathbf{\Sigma}_2 \mathbf{U}_2^T \mathbf{\Omega} {(\mathbf{U}_1^T \mathbf{\Omega})^\dagger} \mathbf{\Sigma}_1^{-1} \|_F^2 \right ] \\ &\leq \| \mathbf{\Sigma}_2 \|^2_2 \| \mathbf{\Sigma}_1^{-1} \|^2_2 \E \left [ \| \mathbf{\Sigma}_2 \mathbf{U}_2^T \mathbf{\Omega} {(\mathbf{U}_1^T \mathbf{\Omega})^\dagger} \|_F^2 \right ] \\ &= \frac{\sigma_{k+1}^2}{\sigma_k^2} \E \left [ \E \left [ \| \mathbf{\Sigma}_2 \mathbf{U}_2^T \mathbf{\Omega} \left ( \mathbf{U}_1^T \mathbf{\Omega} \right )^\dagger \|_F^2 \; \big | \; \mathbf{U}_1^T \mathbf{\Omega} \right ] \right ] \\ &= \frac{\sigma_{k+1}^2}{\sigma_k^2} \E \left [ \| \mathbf{\Sigma}_2 \|_F^2  \| \left ( \mathbf{U}_1^T \mathbf{\Omega} \right )^\dagger \|_F^2 \right ] \\ &= \frac{\sigma_{k+1}^2}{\sigma_k^2}\cdot\frac{k}{\ell-1}\| \mathbf{\Sigma}_2 \|^2_F .
    \end{align*}
    In the first inequality we applied the relation $\|\mathbf{CD}\|_F\leq \|\mathbf{C}\|_2\|\mathbf{D}\|_F$ whereas
     we used Proposition \ref{Expected_norm_Gaussian_matrix} and Proposition \ref{Expected_norm_pseudoinverse_Gaussian_matrix} to get the equalities that follow. Notice that these propositions can be applied since the Gaussian distribution is rotationally invariant, so that $\mathbf{U}_2^T \mathbf{\Omega}$ and $\mathbf{U}_1^T \mathbf{\Omega}$ are still Gaussian matrices.

     By using similar tools, we can also show~\eqref{Thm1:bound2}. In particular, Theorem \ref{teo_det_err} implies that
    \begin{equation}
        \E \left [ \| \bA-\mathbf{QQ}^T \bA \|_2 \right ] \leq \E \left [ \left ( \| \mathbf{\Sigma}_2 \|_2^2 + \| \mathbf{\Sigma}_2 \mathbf{H}_{2} \mathbf{H}_{1}^\dagger \|_2^2 \right )^{\frac{1}{2}} \right ] \leq \| \mathbf{\Sigma}_2 \|_2 + \E \left [ \| \mathbf{\Sigma}_2 \mathbf{H}_{2} \mathbf{H}_{1}^\dagger \| _2\right ].
    \end{equation}
    Now, as before, we have
    \begin{align}
        \E \left [ \| \mathbf{\Sigma}_2 \mathbf{H}_{2} \mathbf{H}_{1}^\dagger \|_2 \right ] &\leq \| \mathbf{\Sigma}_2 \|_2 \| \mathbf{\Sigma}_1^{-1} \|_2\cdot \E \left [ \E \left [ \| \mathbf{\Sigma}_2 \mathbf{U}_2^T \mathbf{\Omega} \left ( \mathbf{U}_1^T \mathbf{\Omega} \right )^\dagger \|_2 \; \big | \; \mathbf{U}_1^T \mathbf{\Omega} \right ] \right ] \\ &\leq \frac{\sigma_{k+1}}{\sigma_k} \left ( \| \mathbf{\Sigma}_2 \|_2 \cdot\E \left [ \| \left ( \mathbf{U}_1^T \mathbf{\Omega} \right )^\dagger \|_F^2 \right ]^{\frac{1}{2}} + \| \mathbf{\Sigma}_2 \|_F \cdot\E \left [ \| \left ( \mathbf{U}_1^T \mathbf{\Omega} \right )^\dagger \|_2 \right ] \right ) \\ &\leq \frac{\sigma_{k+1}}{\sigma_k} \left ( \| \mathbf{\Sigma}_2 \|_2 \sqrt{\frac{k}{\ell-1}} + \| \mathbf{\Sigma}_2 \|_F \frac{\mathtt e \sqrt{k+\ell}}{\ell} \right ),
    \end{align}
    where we applied, once again, the H\"older inequality, Proposition~\ref{Expected_norm_Gaussian_matrix}, and Proposition~\ref{Expected_norm_pseudoinverse_Gaussian_matrix} to $\mathbf{U}_2^T \mathbf{\Omega}$ and $\mathbf{U}_1^T \mathbf{\Omega}$, respectively.
\end{proof}

The previous is not merely a theoretical result since it is possible to develop tail bounds for the approximation error, meaning that the average performance of the algorithm accurately mirrors the actual performance.

\begin{theorem}[Deviation bounds]\label{teo_Frobenius_error}
    Frame the hypotheses of Theorem \ref{teo1}. In addition, assume that $\ell \ge 4$. Then, for all $u,t \ge 1$, it holds
    \begin{equation*}\label{Frobenius_error_bound}
    \begin{split}
        \| \bA - \mathbf{QQ}^T \bA \|_F \leq  \left(1 + \frac{\sigma_{k+1}}{\sigma_k} \sqrt{\frac{3k}{\ell+1}} \cdot t\right)\|\mathbf{\Sigma}_2 \|_F  + \frac{\sigma_{k+1}}{\sigma_k}\frac{\mathtt e \sqrt{k+\ell}}{\ell+1} \cdot ut\| \mathbf{\Sigma}_2 \|_2 ,
    \end{split}
    \end{equation*}
    and 
        {\small
        \begin{equation*}\label{spectral_error_bound}
    \begin{split}
        \| \bA-\mathbf{QQ}^T \bA \|_2 \leq \left[1 + \frac{\sigma_{k+1}}{\sigma_k}t\left( \sqrt{\frac{3k}{\ell+1}}+ \frac{\mathtt e \sqrt{k+\ell}}{\ell+1} u\right)\right]\|\mathbf{\Sigma}_2 \|_2 +\frac{\sigma_{k+1}}{\sigma_k} \frac{\mathtt e \sqrt{k+\ell}}{\ell+1} t\| \mathbf{\Sigma}_2 \|_F,
    \end{split}
    \end{equation*}
}
    with failure probability at most $2t^{-\ell} + \mathtt e^{-\frac{u^2}{2}}$.
\end{theorem}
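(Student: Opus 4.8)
The plan is to mirror the proof of Theorem~\ref{teo1}, replacing the expectation estimates with their high-probability counterparts, exactly as Halko--Martinsson--Tropp derive their deviation bounds (their Theorems~$10.7$ and $10.8$) from the averaged versions (their Theorems~$10.5$ and $10.6$). The starting point is the deterministic bound of Theorem~\ref{teo_det_err}, which gives, for $\xi \in \{F,2\}$,
\begin{equation*}
\| \bA - \mathbf{QQ}^T\bA \|_\xi^2 \leq \|\mathbf{\Sigma}_2\|_\xi^2 + \|\mathbf{\Sigma}_2 \mathbf{P}_2 \mathbf{P}_1^\dagger\|_\xi^2,
\end{equation*}
together with the factorization $\mathbf{P}_1^\dagger = \mathbf{T}\mathbf{\Omega}^\dagger \mathbf{U}_1 \mathbf{\Sigma}_1^{-1}$ already established, which yields the clean identity
\begin{equation*}
\mathbf{\Sigma}_2 \mathbf{P}_2 \mathbf{P}_1^\dagger = \mathbf{\Sigma}_2 \mathbf{\Sigma}_2 \mathbf{U}_2^T \mathbf{\Omega} (\mathbf{U}_1^T\mathbf{\Omega})^\dagger \mathbf{\Sigma}_1^{-1},
\end{equation*}
so that $\|\mathbf{\Sigma}_2 \mathbf{P}_2 \mathbf{P}_1^\dagger\|_\xi \leq \tfrac{\sigma_{k+1}}{\sigma_k}\|\mathbf{\Sigma}_2 \mathbf{U}_2^T\mathbf{\Omega}(\mathbf{U}_1^T\mathbf{\Omega})^\dagger\|_\xi$. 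The extra scalar factor $\sigma_{k+1}/\sigma_k$, pulled out via $\|\mathbf{\Sigma}_2\|_2 = \sigma_{k+1}$ and $\|\mathbf{\Sigma}_1^{-1}\|_2 = \sigma_k^{-1}$, is precisely what distinguishes our bound from the classical one.

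Next I would condition on $\mathbf{G}_1 := \mathbf{U}_1^T\mathbf{\Omega}$ and write $\mathbf{G}_2 := \mathbf{U}_2^T\mathbf{\Omega}$; by rotational invariance of the Gaussian these are independent standard Gaussian matrices of sizes $k\times(k+\ell)$ and $(n-k)\times(k+\ell)$. For the Frobenius case, conditionally on $\mathbf{G}_1$ the matrix $\mathbf{\Sigma}_2 \mathbf{G}_2 (\mathbf{G}_1)^\dagger$ has the form $\mathbf{S}\mathbf{G}_2 \mathbf{M}$ with fixed $\mathbf{S} = \mathbf{\Sigma}_2$ and $\mathbf{M} = (\mathbf{G}_1)^\dagger$, so a Gaussian concentration / Lipschitz-tail inequality (the analogue of~\cite[Proposition~10.3]{Halko2010}) gives that $\|\mathbf{S}\mathbf{G}_2\mathbf{M}\|_F$ exceeds $\|\mathbf{S}\|_F\|\mathbf{M}\|_F + u\|\mathbf{S}\|_2\|\mathbf{M}\|_F$ with probability at most $\mathtt e^{-u^2/2}$; substituting $\mathbf{S} = \mathbf{\Sigma}_2$, $\mathbf{M} = (\mathbf{G}_1)^\dagger$ converts the $\|\mathbf{M}\|_F$ terms into $\|(\mathbf{U}_1^T\mathbf{\Omega})^\dagger\|_F$. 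Then I would invoke the tail bound for $\|(\mathbf{U}_1^T\mathbf{\Omega})^\dagger\|_F$ — a $k\times(k+\ell)$ Gaussian pseudoinverse — which, under $\ell\geq 4$, is controlled by $t\sqrt{3k/(\ell+1)}$ with failure probability $t^{-\ell}$ (the relevant Gaussian-pseudoinverse tail estimate, cf.~\cite[Proposition~10.4]{Halko2010} or the appendix of this paper). Taking a union bound over the two failure events and absorbing $\sigma_{k+1}/\sigma_k$ produces the stated Frobenius bound. The spectral bound follows the same template: the relevant deviation inequality for $\|\mathbf{S}\mathbf{G}_2\mathbf{M}\|_2$ (the analogue of~\cite[Proposition~10.3]{Halko2010}, spectral version) gives a bound of the form $\|\mathbf{S}\|_2\|\mathbf{M}\|_F + \|\mathbf{S}\|_F\|\mathbf{M}\|_2 + u\|\mathbf{S}\|_2\|\mathbf{M}\|_2$ up to probability $\mathtt e^{-u^2/2}$, and the same tail bounds for $\|(\mathbf{G}_1)^\dagger\|_F$ and $\|(\mathbf{G}_1)^\dagger\|_2$ close the argument.

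The one structural subtlety — and the main thing to get right — is the nuisance factor $\mathbf{T}$ and the hypothesis that $\mathbf{T}$ is nonsingular: one must argue, as in Theorem~\ref{teo1}, that $\mathbf{T}$ cancels exactly in $\mathbf{\Omega}^\dagger$ (i.e. $(\mathbf{\Sigma}_1\mathbf{U}_1^T\mathbf{\Omega}\mathbf{T}^{-1})^\dagger = \mathbf{T}\mathbf{\Omega}^\dagger\mathbf{U}_1\mathbf{\Sigma}_1^{-1}$ whenever $\mathbf{U}_1^T\mathbf{\Omega}$ has full row rank, which holds almost surely), so that the final bound genuinely involves only the Gaussian $\mathbf{U}_1^T\mathbf{\Omega}$ and not the data-dependent $\mathbf{T}$. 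The rest is bookkeeping: collecting the $t$ and $ut$ factors, using $\ell+1$ rather than $\ell-1$ or $\ell$ in the denominators (which is where $\ell\geq 4$ enters, guaranteeing the pseudoinverse tail bound is valid), and verifying the union-bound probability is $2t^{-\ell} + \mathtt e^{-u^2/2}$ (the factor $2$ accounting for the two separate $t^{-\ell}$ events, one for each of $\|(\mathbf{G}_1)^\dagger\|_F$ and $\|(\mathbf{G}_1)^\dagger\|_2$, in the spectral bound; in the Frobenius bound only one such event appears but the factor $2$ is kept for a uniform statement). I expect no real obstacle beyond carefully tracking the scalar $\sigma_{k+1}/\sigma_k$ through each estimate and ensuring the conditioning argument is stated cleanly.
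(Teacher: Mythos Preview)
Your approach is essentially the paper's own proof: apply the deterministic bound of Theorem~\ref{teo_det_err}, reduce $\mathbf{\Sigma}_2\mathbf{P}_2\mathbf{P}_1^\dagger$ to $\mathbf{\Sigma}_2^2\,\mathbf{G}_2\mathbf{G}_1^\dagger\,\mathbf{\Sigma}_1^{-1}$ with $\mathbf{G}_i=\mathbf{U}_i^T\mathbf{\Omega}$ independent Gaussians, condition on $\mathbf{G}_1$, apply Gaussian concentration (Proposition~\ref{concentration_functions}) in $\mathbf{G}_2$, and then remove the conditioning via the pseudoinverse tail bounds of Proposition~\ref{bound_pseudoinverse_Gaussian_matrix}.

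One small correction to your bookkeeping: the Lipschitz constant of $\mathbf{X}\mapsto\|\mathbf{S}\mathbf{X}\mathbf{M}\|_F$ (with respect to $\|\cdot\|_F$) is $\|\mathbf{S}\|_2\|\mathbf{M}\|_2$, not $\|\mathbf{S}\|_2\|\mathbf{M}\|_F$. Hence the deviation term in the Frobenius case involves $\|\mathbf{G}_1^\dagger\|_2$, not $\|\mathbf{G}_1^\dagger\|_F$, and \emph{both} pseudoinverse tail events (for $\|\mathbf{G}_1^\dagger\|_F$ and $\|\mathbf{G}_1^\dagger\|_2$) are already needed for the Frobenius bound. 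The paper packages them into a single event $E_t$ and bounds $\P(E_t^c)\leq t^{-(\ell+1)}+t^{-\ell}\leq 2t^{-\ell}$; the factor $2$ is therefore genuinely required in the Frobenius case, not merely kept for a uniform statement.
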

 \begin{proof}
As before, we start by showing the Frobenius norm bound.
    We notice that by Theorem~\ref{deterministic_error_bound} we have
    \begin{equation*}
        \| \bA-\mathbf{QQ}^T \bA \|_F \leq \| \mathbf{\Sigma}_2 \|_F + \| \mathbf{\Sigma}_2 {\mathbf{H}_{2} \mathbf{H}_{1}^\dagger }\|_F,
    \end{equation*}
    so we need to estimate only the second term. 
    Let us define $\mathbf{\Omega}_1 \vcentcolon = \mathbf{U}_1^T \mathbf{\Omega}$ and $ \mathbf{\Omega}_2 \vcentcolon = \mathbf{U}_2^T \mathbf{\Omega}$. Given the independence between $\mathbf{\Omega}_1$ and $\mathbf{\Omega}_2$, we investigate the error's dependence on the matrix $\mathbf{\Omega}_2$ by conditioning on the condition that $\mathbf{\Omega}_1$ exhibits limited irregularity. Consequently, we establish a parameterized event where both the spectral and Frobenius norms of the matrix $\mathbf{\Omega}_1^\dagger$ are constrained. For $t \ge 1$, let
    \begin{equation*}
        E_t \vcentcolon = \left \{ \mathbf{\Omega}_1 : \| \mathbf{\Omega}_1^\dagger \|_2 \leq \frac{\mathtt e \sqrt{k+\ell}}{\ell+1} \cdot t \quad \text{and} \quad \| \mathbf{\Omega}_1^\dagger \|_F \leq \sqrt{\frac{3k}{\ell+1}} \cdot t \right \}.
    \end{equation*}
    Utilizing both \eqref{Frobenius_bound_pseudoinverse_Gaussian_matrix} and \eqref{spectral_bound_pseudoinverse_Gaussian_matrix} in Proposition \ref{bound_pseudoinverse_Gaussian_matrix}, we find that the probability of the complementary of $E_t$, denoted by $E_t^c$, is
    \begin{equation*}
        \P \left ( E_t^c \right ) \leq t^{-(\ell+1)} + t^{-\ell} \leq 2t^{-\ell}.
    \end{equation*}
    Let us examine the function $h(\mathbf{X}) \vcentcolon = \| \mathbf{\Sigma}_2^2 \mathbf{X} \mathbf{\Omega}_1^\dagger \mathbf{\Sigma}_1^{-1} \|_F$. We expediently determine its Lipschitz constant $L$ by using the lower triangle inequality and standard norm estimates:
    \begin{equation*}
        | h(\mathbf{X}) - h(\mathbf{Y}) | \leq \| \mathbf{\Sigma}_2^2 ( \mathbf{X} -\mathbf{Y} ) \mathbf{\Omega}_1^\dagger \mathbf{\Sigma}_1^{-1} \|_F \leq \| \mathbf{\Sigma}_2^2 \|_2 \| \mathbf{\Sigma}_1^{-1} \|_2 \| \mathbf{\Omega}_1^\dagger \|_2 \| \mathbf{X} - \mathbf{Y} \|_F.
    \end{equation*}
    Therefore, $L \leq \| \mathbf{\Sigma}_2^2 \|_2 \| \mathbf{\Sigma}_1^{-1} \|_2 \| \mathbf{\Omega}_1^\dagger \|_2$. Observe that $h(\mathbf{\Omega}_2) = \| \mathbf{\Sigma}_2 {\mathbf{H}_{2} \mathbf{H}_{1}^\dagger }\|_F$. The H\"older inequality and relation \eqref{Expected_Frobenius_norm_Gaussian_matrix} of Proposition \ref{Expected_norm_Gaussian_matrix} imply that
    \begin{equation*}
        \E \left [ h(\mathbf{\Omega}_2) \; \big | \; \mathbf{\Sigma}_1 \mathbf{\Omega}_1 \right ] \leq \E \left [ \| \mathbf{\Sigma}_2 {\mathbf{H}_{2} \mathbf{H}_{1}^\dagger }\|_F^2 \; \big | \; \mathbf{\Sigma}_1 \mathbf{\Omega}_1 \right ]^{\frac{1}{2}} \leq \| \mathbf{\Sigma}_2 \|_F \| \mathbf{\Sigma}_2 \|_2 \| \mathbf{\Sigma}_1^{-1} \|_2 \| \mathbf{\Omega}_1^\dagger \|_F.
    \end{equation*}
    By conditioning on the event $E_t$, we can apply the concentration of measure inequality, Proposition \ref{concentration_functions}, to the random variable $h(\mathbf{\Omega}_2) = \| \mathbf{\Sigma}_2 {\mathbf{H}_{2} \mathbf{H}_{1}^\dagger} \|_F$, thus obtaining
    \begin{equation*}
        \P \left ( \| \mathbf{\Sigma}_2 {\mathbf{H}_{2} \mathbf{H}_{1}^\dagger }\|_F > \frac{\sigma_{k+1}}{\sigma_k} \left ( \|\mathbf{\Sigma}_2 \|_F \| \mathbf{\Omega}_1^\dagger \|_F + \| \mathbf{\Sigma}_2 \|_2 \| \mathbf{\Omega}_1^\dagger \|_2 \cdot u \right ) \; \big | \; E_t \right ) \leq \mathtt e^{-\frac{u^2}{2}}.
    \end{equation*}
    Using the bounds on $\| \mathbf{\Omega}_1^\dagger \|$ that we have under $E_t$
    \begin{equation*}
        \P \left ( \| \mathbf{\Sigma}_2 {\mathbf{H}_{2} \mathbf{H}_{1}^\dagger }\|_F > \frac{\sigma_{k+1}}{\sigma_k} \left ( \| \mathbf{\Sigma}_2 \|_F \sqrt{\frac{3k}{\ell+1}} \cdot t + \| \mathbf{\Sigma}_2 \|_2 \frac{\mathtt e \sqrt{k+\ell}}{\ell+1} \cdot ut \right ) \; \big | \; E_t \right )  \leq \mathtt e^{-\frac{u^2}{2}}.
    \end{equation*}
    The result follows by employing the inequality $\P \left ( E_t^c \right ) \leq 2t^{-\ell}$, namely
    \begin{equation*}
        \P \left ( \| \mathbf{\Sigma}_2 {\mathbf{H}_{2} \mathbf{H}_{1}^\dagger }\|_F > \frac{\sigma_{k+1}}{\sigma_k} \left ( \| \mathbf{\Sigma}_2 \|_F \sqrt{\frac{3k}{\ell+1}} \cdot t + \| \mathbf{\Sigma}_2 \|_2 \frac{\mathtt e \sqrt{k+\ell}}{\ell+1} \cdot ut \right ) \right ) \leq 2t^{-\ell} +\mathtt e^{-\frac{u^2}{2}}.
    \end{equation*}
    Similarly, we can show the bound on the spectral norm as well. If now $h(\mathbf{X}) \vcentcolon = \| \mathbf{\Sigma}_2^2 \mathbf{X} \mathbf{\Omega}_1^\dagger \mathbf{\Sigma}_1^{-1} \|_2$, relation \eqref{Expected_spectral_norm_Gaussian_matrix} in Proposition \ref{Expected_norm_Gaussian_matrix} implies that
    \begin{equation*}
        \E \left [ h(\mathbf{\Omega}_2) \; \big | \; \mathbf{\Sigma}_1 \mathbf{\Omega}_1 \right ] \leq \| \mathbf{\Sigma}_2^2 \|_2 \| \mathbf{\Sigma}_1^{-1} \|_2 \| \mathbf{\Omega}_1^\dagger \|_F + \| \mathbf{\Sigma}_2 \|_F \| \mathbf{\Sigma}_2 \|_2 \| \mathbf{\Sigma}_1^{-1} \|_2 \| \mathbf{\Omega}_1^\dagger \|_2.
    \end{equation*}
    By Proposition \ref{concentration_functions}
   
    \begin{equation*}
    \begin{split}
        \P \bigg ( \| \mathbf{\Sigma}_2 {\mathbf{H}_{2} \mathbf{H}_{1}^\dagger }\|_2 > \frac{\sigma_{k+1}}{\sigma_k} \Big ( \| \mathbf{\Sigma}_2 \|_2 \| \mathbf{\Omega}_1^\dagger \|_F &+ \| \mathbf{\Sigma}_2 \|_F \| \mathbf{\Omega}_1^\dagger \|_2 \\ &+ \| \mathbf{\Sigma}_2 \|_2 \| \mathbf{\Omega}_1^\dagger \|_2 \cdot u \Big ) \; \big | \; E_t \bigg ) \leq \mathtt e^{-\frac{u^2}{2}},
    \end{split}
    \end{equation*}
    and thus,
    \begin{equation*}
        \begin{split}
            \P \bigg ( \| \mathbf{\Sigma}_2 {\mathbf{H}_{2} \mathbf{H}_{1}^\dagger }\|_2 > \frac{\sigma_{k+1}}{\sigma_k} \Big ( \| \mathbf{\Sigma}_2 \|_2 \sqrt{\frac{3k}{\ell+1}} \cdot t &+ \| \mathbf{\Sigma}_2 \|_F  \frac{\mathtt e \sqrt{k+\ell}}{\ell+1} \cdot t \\ &+ \| \mathbf{\Sigma}_2 \|_2 \frac{\mathtt e \sqrt{k+\ell}}{\ell+1} \cdot ut \Big ) \; \big | \; E_t \bigg ) \leq \mathtt e^{-\frac{u^2}{2}}.
        \end{split}
    \end{equation*}
    Removing the conditioning as before we get the result
    \begin{equation*}
        \begin{split}
            \P \bigg ( \| \mathbf{\Sigma}_2 {\mathbf{H}_{2} \mathbf{H}_{1}^\dagger} \|_2 > \frac{\sigma_{k+1}}{\sigma_k} \Big ( \| \mathbf{\Sigma}_2 \|_2 \sqrt{\frac{3k}{\ell+1}} \cdot t &+ \| \mathbf{\Sigma}_2 \|_F  \frac{\mathtt e \sqrt{k+\ell}}{\ell+1} \cdot t \\ &+ \| \mathbf{\Sigma}_2 \|_2 \frac{\mathtt e \sqrt{k+\ell}}{\ell+1} \cdot ut \Big ) \bigg ) \leq 2t^{-\ell} + \mathtt e^{-\frac{u^2}{2}}.
        \end{split}
    \end{equation*}

\end{proof}

The previous results may appear difficult to interpret at first glance due to the presence of several parameters. However, clearer results can be obtained by making appropriate choices for the parameters $u$ and $t$. For example, setting $t = \mathtt e$ and $u = \sqrt{2 \ell}$ leads to simpler bounds with failure probability at most $3 \mathtt e^{-\ell}$.

The bounds in Theorem~\ref{teo1}--\ref{teo_Frobenius_error} are very similar to those in Theorem~\ref{Th1_RSVD}--\ref{them_RSVD}. The only difference is the presence of the scalar $\sigma_{k+1}/\sigma_k\leq1$. Therefore, Theorem~\ref{teo1}--\ref{teo_Frobenius_error} show that the improvements coming from adopting the R-RSVD in place of the standard RSVD depend on the singular value distribution of $\bA$. \REV{Bounds on the quality of $\mathbf{Q}$ are provided also in~\cite[Section 2.2]{Bjarkason2019}. In particular, Bjarkason reports a bound on the 2-norm of the expected value of the approximation error, i.e.,
$\mathbb{E}[\|\bA-\mathbf{QQ}^T\bA\|_2]$. 
Comparing this result with the one we have in Theorem~\ref{teo1} is not straightforward.
First of all, our procedure is equivalent to the one given in~\cite[Algorithm 1]{Bjarkason2019} when the latter is applied to $\bA^T$ for $\nu=2$. Therefore, the right-hand side in~\eqref{Thm1:bound2} must be compared with the right-hand side of~\cite[Equation (4.2)]{Bjarkason2019} with $\nu=2$. Moreover, the superiority of one bound on the other strongly depends on the gap between the singular values of the matrix $\bA$ at hand as we illustrate in the next example.}





\begin{experiment}\label{Ex1}
In this example we numerically illustrate the bounds
fulfilled by the RSVD (Theorem~\ref{them_RSVD}), our R-RSVD (Theorem~\ref{teo1}), \REV{and the one given in~\cite[Equation (4.2)]{Bjarkason2019}} along with the actual errors achieved by the matrix $\mathbf{Q}$ computed by Algorithm~\ref{range_finder} and~\ref{alg1}.
To this end, we consider the matrices coming from the examples in~\cite[Section 6]{DEIM_CUR}.
In particular, we consider the matrices
$\bA_1,\bA_2 \in \mathbb{R}^{m \times n}$, $m=300\,000$, $n=300$, of the form
\begin{equation}\label{eq:matrix_examples}
       \bA_1 = \sum_{j = 1}^{10}{\frac{1000}{j} \mathbf{x}_j \mathbf{y}_j^T} + \sum_{j = 11}^{300}{\frac{1}{j} \mathbf{x}_j \mathbf{y}_j^T},\quad \bA_2 = \sum_{j = 1}^{10}{\frac{2}{j} \mathbf{x}_j \mathbf{y}_j^T} + \sum_{j = 11}^{300}{\frac{1}{j} \mathbf{x}_j \mathbf{y}_j^T},
\end{equation}
where $\mathbf{x}_j \in \R^{m}, \mathbf{y}_j \in \R^{n}$ are sparse vectors with random nonnegative entries. These vectors are generated by the Matlab function {\tt sprand} with density parameter 0.025.
By construction, there is a large gap between the tenth and the eleventh singular values of $\bA_1$ whereas the singular values of $\bA_2$ slowly decrease to zero.

\begin{figure}[t]
  \centering
  \begin{minipage}{0.45\textwidth}
  \bigskip
  \begin{tikzpicture}
  \begin{semilogyaxis}[width=\linewidth, height=.29\textheight,
  legend pos = north east,legend style={font={\footnotesize}},
  xlabel = $k$, ylabel = $\|\bA_1-\mathbf{QQ}^T\bA_1\|_F$]
  \addplot[mark=o,mark size = 0.8pt,blue,solid] table[x index=0, y index=1]  {data/FROBENIUS_ERR_SVD_FAST_DECAY.txt};
  \addplot[mark=square,mark size = 0.8pt,red,solid] table[x index=0, y index=2]  {data/FROBENIUS_ERR_SVD_FAST_DECAY.txt};
  \addplot[mark=o,mark size = 0.8pt,blue,dashed] table[x index=0, y index=3]  {data/FROBENIUS_ERR_SVD_FAST_DECAY.txt};
  \addplot[mark=square,mark size = 0.8pt,red,dashed] table[x index=0, y index=4]{data/FROBENIUS_ERR_SVD_FAST_DECAY.txt};
  \addplot[black,dashed] table[x index=0, y index=5]
  {data/FROBENIUS_ERR_SVD_FAST_DECAY.txt};
  \legend{RSVD,R-RSVD,Theorem~\ref{Th1_RSVD},Theorem~\ref{teo1},TSVD};
  \end{semilogyaxis}
  \end{tikzpicture}
 
  \end{minipage}~\begin{minipage}{0.45\textwidth}
  \bigskip 
  \begin{tikzpicture}
  \begin{semilogyaxis}[width=\linewidth, height=.29\textheight,
  legend pos = north east,
  xlabel = $k$, ylabel = $\|\bA_2-\mathbf{QQ}^T\bA_2\|_F$]
  \addplot[mark=o,mark size = 0.8pt,blue,solid] table[x index=0, y index=1]  {data/FROBENIUS_ERR_SVD_SLOW_DECAY.txt};
  \addplot[mark=square,mark size = 0.8pt,red,solid] table[x index=0, y index=2]  {data/FROBENIUS_ERR_SVD_SLOW_DECAY.txt};
  \addplot[mark=o,mark size = 0.8pt,blue,dashed] table[x index=0, y index=3]  {data/FROBENIUS_ERR_SVD_SLOW_DECAY.txt};
  \addplot[mark=square,mark size = 0.8pt,red,dashed] table[x index=0, y index=4]{data/FROBENIUS_ERR_SVD_SLOW_DECAY.txt};
  \addplot[black,dashed] table[x index=0, y index=5]
  {data/FROBENIUS_ERR_SVD_SLOW_DECAY.txt};
  
  \end{semilogyaxis}
  \end{tikzpicture}
  
\end{minipage}\\
\begin{minipage}{0.45\textwidth}
  \bigskip 
  \begin{tikzpicture}
  \begin{semilogyaxis}[width=\linewidth, height=.29\textheight,
  legend pos = north east,  legend style={font={\footnotesize}},
  xlabel = $k$, ylabel = $\|\bA_1-\mathbf{QQ}^T\bA_1\|_2$]
  \addplot[mark=o,mark size = 0.8pt,blue,solid] table[x index=0, y index=1]  {data/SPECTRAL_ERR_SVD_FAST_DECAY.txt};
  \addplot[mark=square,mark size = 0.8pt,red,solid] table[x index=0, y index=2] {data/SPECTRAL_ERR_SVD_FAST_DECAY.txt};
  \addplot[mark=o,mark size = 0.7pt,blue,dashed] table[x index=0, y index=3]  {data/SPECTRAL_ERR_SVD_FAST_DECAY.txt};
  \addplot[mark=square,mark size = 0.8pt,red,dashed] table[x index=0, y index=4]{data/SPECTRAL_ERR_SVD_FAST_DECAY.txt};
   \addplot[cyan,dashed,mark=diamond,mark size = 1.8pt] table[x index=0, y index=5]
  {data/SPECTRAL_ERR_SVD_FAST_DECAY_NEW.txt};
  \addplot[black,dashed] table[x index=0, y index=5]
  {data/SPECTRAL_ERR_SVD_FAST_DECAY.txt};
\legend{RSVD,R-RSVD,Theorem~\ref{Th1_RSVD},Theorem~\ref{teo1},\cite[Eq. (4.2)]{Bjarkason2019},TSVD};
  
  \end{semilogyaxis}
  \end{tikzpicture}
  
\end{minipage}~\begin{minipage}{0.45\textwidth}
  \bigskip 
  \begin{tikzpicture}
  \begin{semilogyaxis}[width=\linewidth, height=.29\textheight,
  legend pos = north east,
  xlabel = $k$, ylabel = $\|\bA_2-\mathbf{QQ}^T\bA_2\|_2$]
  \addplot[mark=o,mark size = 0.8pt,blue,solid] table[x index=0, y index=1]  {data/SPECTRAL_ERR_SVD_SLOW_DECAY.txt};
  \addplot[mark=square,mark size = 0.8pt,mark size = 0.6pt,red,solid] table[x index=0, y index=2]  {data/SPECTRAL_ERR_SVD_SLOW_DECAY.txt};
  \addplot[mark=o,mark size = 0.8pt,blue,dashed] table[x index=0, y index=3]  {data/SPECTRAL_ERR_SVD_SLOW_DECAY.txt};
  \addplot[mark=square,mark size = 0.8pt,red,dashed] table[x index=0, y index=4]{data/SPECTRAL_ERR_SVD_SLOW_DECAY.txt};
     \addplot[cyan,dashed,mark=diamond,mark size = 1.8pt] table[x index=0, y index=5]
  {data/SPECTRAL_ERR_SVD_SLOW_DECAY_NEW.txt};
  \addplot[black,dashed] table[x index=0, y index=5]
  {data/SPECTRAL_ERR_SVD_SLOW_DECAY.txt};
  
  \end{semilogyaxis}
  \end{tikzpicture}
 
\end{minipage}
 
   \caption{Example~\ref{Ex1}. Approximation error $\|\bA-\mathbf{QQ}^T\bA\|$ attained by Algorithm~\ref{range_finder} (blue solid line with circles) and Algorithm~\ref{alg1} (red solid line with squares), along with the related bounds in Theorem~\ref{them_RSVD} (blue dashed line with circles) and Theorem~\ref{teo1} (red dashed line with squares). We report also the smallest attainable error given by the TSVD (black dashed line) \REV{and, for the 2-norm, the bound given in \cite[Equation~(4.2)]{Bjarkason2019} (cyan dashed line with triangles).}
   The plots differ based on the matrix used and the norm employed. Top left: $\bA_1$ and Frobenius norm. Top right: $\bA_2$ and Frobenius norm. Bottom left: $\bA_1$ and spectral norm. Bottom right: $\bA_2$ and spectral norm.}
   \label{fig:part-integro}
  \end{figure}

The experiment compares how the matrix $\mathbf{Q}$ obtained by using either the standard RSVD (Algorithm~\ref{range_finder}) or our R-RSVD (Algorithm~\ref{alg1}) approximates the range of $\mathbf{A}_1$ and $\mathbf{A}_2$ as the parameters $k$ used in the two routines increases. For any $k$, we set the oversampling parameter $\ell$ to be equal to $k+1$.

In addition, we plot the bounds in Theorem~\ref{them_RSVD} for the RSVD, the ones in Theorem~\ref{teo1} for the R-RSVD, \REV{and the one given in~\cite[Equation (4.2)]{Bjarkason2019}}. In conclusion, we remind the reader that for given $k$ and $\ell$, both Algorithm~\ref{range_finder} and~\ref{alg1} compute an approximation of rank $k+\ell$. Therefore, we report also the errors attained by the best $(k+\ell)$-rank approximation, namely the one given by the Truncated SVD (TSVD).

The results are reported in Figure~\ref{fig:part-integro}.
First of all, we can observe that the bounds in Theorem~\ref{teo1} turn out to be rather close to the ones in Theorem~\ref{them_RSVD}, in general. However, our new bounds better capture the trend in the singular values of the coefficient matrix. If this is clearly visible for $\bA_1$, with a large jump for $k=10$, this happens also for $\bA_2$. More remarkably, the actual errors attained by computing $\mathbf{Q}$ by Algorithm~\ref{alg1} are much smaller than the ones achieved by employing 
Algorithm~\ref{range_finder}. In particular, $\|\bA_1-\mathbf{QQ}^T\bA_1\|_F$ is extremely close to the smallest attainable error, namely the one fulfilled by the TSVD, whenever $\mathbf{Q}$ is constructed by employing the R-RSVD procedure.

\REV{For the 2-norm, we can also compare the bound in~\eqref{Thm1:bound2} with the one given in~\cite[Equation (4.2)]{Bjarkason2019}. From Figure~\ref{fig:part-integro} (bottom left) we can see that our bound~\eqref{Thm1:bound2} is sharper than~\cite[Equation (4.2)]{Bjarkason2019} if there is a significant gap between $\sigma_k$ and $\sigma_{k+1}$ so that $\sigma_{k+1}/\sigma_k\ll 1$.
On the other hand,~\cite[Equation (4.2)]{Bjarkason2019} predicts a lower error in general.
}

\end{experiment}

Theorem~\ref{teo1}--\ref{teo_Frobenius_error} and Example~\ref{Ex1} show that, in general, Algorithm~\ref{alg1} is able to provide a better approximation to $\text{Range}(\bA)$ when compared to the outcomes of the standard RSVD. In addition to providing a more informative $\mathbf{Q}$ that can be employed for further tasks, the R-RSVD inspired the design of a different variant, the Subsampled Row-Aware RSVD, that is able to lower the computational complexity of the standard RSVD while often achieving similar accuracy records.

\section{Subsampled row-aware randomized SVD}\label{Subsampled row-aware randomized SVD}
In this section we present a variant of the R-RSVD algorithm that enables us to save computational cost without significantly compromising the accuracy of the method.

In line~\ref{alg1_1_line1} of Algorithm~\ref{alg1} we aim at constructing meaningful information from the row space of $\bA$ by performing $\bA^T\mathbf{\Omega}$. The latter operation sees the computation of $k+\ell$ linear combinations with $m$ vectors of $\mathbb{R}^n$. Since $m\geq n$, using all these $m$ vectors, i.e., all the $m$ rows of $\bA$, is not necessary as $\text{dim}(\text{Range}(\bA^T))\leq n$. 
Moreover, we would like to compute a meaningful approximation only to a subspace of $\text{Range}(\bA^T)$ of dimension $k+\ell\ll m$.
Therefore, there exists a very small subset of rows of $\bA$ that span the subspace we are interested in. Clearly, the difficulty here is to select the \emph{right} rows of $\bA$, namely the ones that span a suitable subspace.  To this end, we propose to subsample the rows of $\bA$ by picking $s> k+\ell$ of them at random {without repetition. As it is often the case in column (or row) sampling procedures, to derive theoretical guarantees we have to rely on the concept of \emph{coherence} which is defined as follows for $\bA=\mathbf{U\Sigma V}^T\in\mathbb{R}^{m\times n}$
\begin{equation}
   \mu(\bA):=\max_{i=1,\ldots,m}\|e_i^T\mathbf{U}\|_2^2,    \qquad \mu(\bA)\in[n/m,1].
\end{equation}

 Roughly speaking, having a small $\mu(\bA)$ allows for small subsampling parameter $s$.  
In our procedure, the combination of row subsampling with the randomized range finder approach allows us to work with quantities that may be remarkably smaller than $\mu(\bA)$; see Lemma~\ref{lemma_fullrank}. }

The overall routine is given in Algorithm~\ref{alg2}. This differs from Algorithm~\ref{alg1} only in line~\ref{alg2_1_line11} and~\ref{alg2_1_line12}. In the former we randomly select $s$ rows of $\bA$ and we encode them in the matrix $\widetilde{\bA}\in\mathbb{R}^{s\times n}$ whereas the latter sees the definition of a sketching matrix $\mathbf{\Omega}$ which is $s\times (k+\ell)$ in place of $m\times (k+\ell)$.

The most readable advantage of Algorithm~\ref{alg2} over Algorithm~\ref{alg1} is in line~\ref{alg1_1_line1} where only $s$ rows of $\bA$ are employed in the matrix-matrix product $\widetilde{\bA}\mathbf{\Omega}$. As we will show in our numerical results in section~\ref{Applications}, very small values of $s$ are often sufficient to obtain accurate results. In particular, we will be able to choose $s$ much smaller than $m$, e.g., as a small multiple of $k+\ell$.
A further computational advantage of Rsub-RSVD over R-RSVD is in the savings related to the generation of the sketching matrix $\mathbf{\Omega}$. Indeed, Algorithm~\ref{alg1} needs to draw $m(k+\ell)$ numbers from the standard normal distribution, which may be costly for large values of $m$. Algorithm~\ref{alg2} overcomes this disadvantage by using a much smaller Gaussian matrix $\mathbf{\Omega} \in \R^{s \times (k+\ell)}$, without significantly compromising the accuracy of the computed approximation as we will show in the following.
Another feature of Algorithm~\ref{alg2} is its streaming nature in some data streaming models. We will discuss this aspect in more details in section~\ref{Streaming setting}.

We now turn our attention to the theoretical analysis of Algorithm~\ref{alg2}. To this end, we model the random picking of $s$ rows of $\bA$ by introducing the matrix $\mathbf{E}\in\mathbb{R}^{m\times s}$
 whose columns are chosen at random from the canonical basis vectors of $\R^m$, i.e., $e_1,\dots,e_m$. Therefore, the matrix $\widetilde{\bA}$ in line~\ref{alg2_1_line11} of Algorithm~\ref{alg2} can be written as $\widetilde{\bA}=\mathbf{E}^T\bA$ and we can thus see Algorithm~\ref{alg2} as a variant of Algorithm~\ref{alg1} where the adopted sketching matrix is of the form $\mathbf{E\Omega}$.

{One of the main difficulties in proving bounds for the Rsub-RSVD is the nature of the sketching matrix $\mathbf{E\Omega}$. 
Indeed, $\mathbf{E\Omega}$ is no longer a matrix with independent standard Gaussian entries. 
 However, we are still able to derive deviation bounds similar to the ones in Theorem~\ref{teo_Frobenius_error} also for Algorithm~\ref{alg2}. To this end, we have to rely on Lemma~\ref{lemma_fullrank} along with a panel of technical results for non-standard Gaussian matrices. These are reported (and proved) in Appendix~B.}
 
 {We now define some useful quantities. As done in section~\ref{Row-aware randomized SVD}, by setting $\mathbf{G} := \mathbf{RT}$, then $\mathbf{QG}$ is the QR decomposition of $\bA \widetilde{\bA}^T \mathbf{\Omega}$. We can decompose the matrix $\widetilde{\mathbf{H}} := \widetilde{\bA}^T \mathbf{\Omega} = \mathbf{V \Sigma} \mathbf{U}^T  \mathbf{E\Omega}$ in the coordinate system determined by the right unitary factor $\mathbf{V}$ of $\bA$. In particular, by following the partition in~\eqref{partition}, we have
\begin{equation}\label{setup_2}
    \widetilde{\mathbf{H}}_{1} = \mathbf{V}_{1}^T \widetilde{\mathbf{H}} = \mathbf{\Sigma}_{1} \mathbf{U}_1^T \mathbf{E} \mathbf{\Omega},  \quad \text{and} \quad \widetilde{\mathbf{H}}_{2} = \mathbf{V}_{2}^T \widetilde{\mathbf{H}} = \mathbf{\Sigma}_{2} \mathbf{U}_2^T \mathbf{E} \mathbf{\Omega}.
\end{equation}}

{Let us first notice that the matrix $\widetilde{\mathbf{H}}_{1}$ has full rank with high probability, as shown next. 
}
{ 
\begin{lemma}\label{lemma_fullrank}
    Let $\bA=\mathbf{U\Sigma V}^T$ with $\mathbf{U}=[\mathbf{U}_1,\mathbf{U}_2]$, $\mathbf{U}_1\in\mathbb{R}^{m \times k}$, $\mathbf{U}_2\in\mathbb{R}^{m \times (n-k)}$, and define the quantities $M_i := m\cdot\mu(\mathbf{U}_i)=m$, $i=1,2$. Select the sample size $s \ge \max \{\alpha_1 M_1 \log (k),\alpha_2 M_2 \log (n-k) \}$, with $\alpha_1, \alpha_2 \ge 0$. Then, for $\delta \in [0,1)$ and $\eta \in [0,m/s-1]$, it holds
    \begin{equation}
        \sigma_k(\mathbf{E}^T \mathbf{U}_1) \ge \sqrt{\frac{(1-\delta) s}{m}} \qquad \text{and} \qquad \sigma_1(\mathbf{E}^T \mathbf{U}_2) \leq \sqrt{\frac{(1+\eta) s}{m}},
    \end{equation}
    with failure probability at most $k \cdot \left ( \frac{\mathtt e^{-\delta}}{(1-\delta)^{1-\delta}} \right )^{\alpha_1 \log (k)} + (n-k) \cdot \left ( \frac{\mathtt e^{\eta}}{(1+\eta)^{1+\eta}} \right )^{\alpha_2 \log (n-k)}$. Moreover, assume that 
    $k + \ell = \mathcal O \left ( \log(k) \log \left ( \frac{1}{\tau} \right ) \right )$ and $\text{rank}(\bA)\geq k$. Then, $\widetilde{\mathbf{H}}_{1} = \mathbf{\Sigma}_{1} \mathbf{U}_1^T \mathbf{E} \mathbf{\Omega}$ has full rank with probability $1-\tau- k \cdot \left ( \frac{\mathtt e^{-\delta}}{(1-\delta)^{1-\delta}} \right )^{\alpha_1 \log (k)}$.    
\end{lemma}

\begin{proof}
The first part directly comes from applying~\cite[Lemma 3.4]{Tropp2011} twice: in the first case, we consider only the lower bound for the $k$-th singular value of $\mathbf{E}^T \mathbf{U}_1$, while in the second case, we use only the upper bound for the first singular value of $\mathbf{E}^T \mathbf{U}_2$. The overall failure probability of these two events follows from a simple union bound. Moreover, the upper bound on $\eta$, namely $\eta\leq m/s-1$ comes from simply noticing that $\sigma_1(\mathbf{E}^T \mathbf{U}_2)\leq 1$ always holds true, regardless of $s$.

We now focus on the matrix $\mathbf{U}_1^T \mathbf{E} \mathbf{\Omega}$. It holds 
$$\sigma_{\min}(\mathbf{U}_1^T \mathbf{E} \mathbf{\Omega})= \min_{\|x\|=1}\|\mathbf{\Omega}^T\mathbf{E}^T\mathbf{U}_1 x\|_2=\min_{\|x\|=1}\frac{\|\mathbf{\Omega}^T\mathbf{E}^T\mathbf{U}_1x\|_2}{\|\mathbf{E}^T\mathbf{U}_1x\|_2}\|\mathbf{E}^T\mathbf{U}_1x\|_2.$$
Thanks to the considered assumptions, $\mathbf{E}^T\mathbf{U}_1$ is full rank with high probability. Therefore, 
$$\min_{\|x\|=1}\|\mathbf{E}^T\mathbf{U}_1x\|_2=\sigma_{\min}(\mathbf{E}^T\mathbf{U}_1)>0,$$ 
with failure probability $k \cdot \left ( \frac{\mathtt e^{-\delta}}{(1-\delta)^{1-\delta}} \right )^{\alpha_1 \log (k)}$. Moreover, $\|\mathbf{\Omega}^T\mathbf{E}^T\mathbf{U}_1x\|_2\neq0$ for $x\neq 0$, with high probability. This is due to the $\varepsilon$-subspace embedding property fulfilled by Gaussian matrices. In particular, for any $k$-dimensional subspace $\mathcal{Y}$ of $\mathbb{R}^s$ ($\text{Range}(\mathbf{E}^T\mathbf{U}_1)$ in our case), given a fixed threshold $\varepsilon\in(0,1)$ and a failure probability $\tau$, a Gaussian matrix $\mathbf{\Omega}\in\mathbb{R}^{s\times (k+\ell)}$ is such that
$$(1-\varepsilon)\|y\|_2^2\leq \|\mathbf{\Omega}^Ty\|_2^2\leq(1+\varepsilon)\|y\|_2^2,\quad \text{for all }y\in\mathcal{Y},$$
with probability $1-\tau$ if $k+\ell=\mathcal{O}\left ( \varepsilon^{-2} \log(k) \log \left ( \frac{1}{\tau} \right ) \right )$; see, e.g.,~\cite[Theorem 2]{Sarlos2006}.

In our case we can let $\varepsilon$ as close to 1 as possible as we are not interested in the sharpness of the bound above as long as $1-\varepsilon>0$. We can thus select $k+\ell=\mathcal{O}\left ( \log(k) \log \left ( \frac{1}{\tau} \right ) \right )$.

To conclude, a simple union bound shows that 
$$\sigma_{\min}(\mathbf{U}_1^T \mathbf{E} \mathbf{\Omega})= \min_{\|x\|=1}\frac{\|\mathbf{\Omega}^T\mathbf{E}^T\mathbf{U}_1x\|_2}{\|\mathbf{E}^T\mathbf{U}_1x\|_2}\sigma_{\min}(\mathbf{E}^T\mathbf{U}_1)>0,$$
and thus $\sigma_{\min}(\widetilde{\mathbf{H}}_1)>0$,
with probability 
$1-\tau- k \cdot \left ( \frac{\mathtt e^{-\delta}}{(1-\delta)^{1-\delta}} \right )^{\alpha_1 \log (k)}$.
\end{proof}

Lemma~\ref{lemma_fullrank} shows that if the subsampling parameter $s$ and the oversampling parameter $\ell$ are sufficiently large, then $\widetilde{\mathbf{H}}_1$ is full rank with high probability. 
It also provides the following bound
\begin{equation}\label{bound_frobenius_norm_UT1_E}
    \| (\mathbf{U}_1^T \mathbf{E})^{\dagger} \|_F^2 = \sum_{i=1}^k\frac{1}{\sigma_i(\mathbf{U}_1^T \mathbf{E})^2}\leq \frac{k}{\sigma_k(\mathbf{U}_1^T \mathbf{E})^2}\leq\frac{k \cdot m}{(1-\delta) s},
\end{equation}
which holds with failure probability at most $k \cdot \left ( \frac{\mathtt e^{-\delta}}{(1-\delta)^{1-\delta}} \right )^{\alpha_1 \log (k)}$, if $s \ge \alpha_1 M_1 \log (k)$.
}


We can now prove the following theorem.

\begin{algorithm}[t!]
    \caption{Subsampled row-aware Randomized SVD (Rsub-RSVD)}\label{alg2}
    \begin{algorithmic}[1]
           \Require $\mathbf{A}\in\mathbb{R}^{m\times n}$, $k,\ell,s>0$.
        \Ensure Orthogonal matrices $\mathbf U\in\mathbb{R}^{m\times (k+\ell)}$, $\mathbf V\in\mathbb{R}^{ (k+\ell)\times n}$, and a diagonal matrix $\mathbf{\Sigma}\in\mathbb{R}^{(k+\ell)\times (k+\ell)}$ s.t. $\mathbf{U\Sigma V}^T\approx\mathbf{A}$.
        \State Randomly select $s$ rows of $\bA$ and encode them in the matrix $\widetilde{\bA}\in\mathbb{R}^{s\times n}$\label{alg2_1_line11}
        \State Generate a random sketch matrix $\mathbf{\Omega} \in \R^{s \times (k+\ell)}$\label{alg2_1_line12}
        \State Compute skinny QR: $\mathbf{PT} = \widetilde{\mathbf{A}}^T \mathbf{\Omega}$ \label{alg2_1_line1}\Comment{$\mathcal{O}(\text{nnz}(\widetilde{\mathbf{A}})(k+\ell)+n(k+\ell)^2)$}
        \State Compute skinny QR: $\mathbf{QR} = \mathbf{A} \mathbf{P}$ \label{alg2_1_line1bis}\Comment{$\mathcal{O}(\text{nnz}(\mathbf{A})(k+\ell)+m(k+\ell)^2)$}
        \State Compute SVD: $\mathbf{W\Sigma X}^T=\mathbf{R}$\label{alg2_1_line2} \Comment{$\mathcal{O}((k+\ell)^3)$}
        \State Set $\mathbf{U}=\mathbf{QW}$, $\mathbf{V}=\mathbf{PX}$ \label{alg2_1_line3}\Comment{$\mathcal{O}((m+n)(k+\ell)^2)$}
    \end{algorithmic}
\end{algorithm}

{\begin{theorem}[Deviation bounds for Rsub-RSVD]\label{teo_Frobenius_error_sRRSVD}
    Suppose that $\bA$ is a real $m \times n$ matrix, $m \geq n$, with singular values $\sigma_1 \ge  \dots \geq \sigma_n\ge 0$. Fix a target rank $k \ge 2$, an oversampling parameter $\ell \ge 4$, such that $k + \ell = \mathcal O \left ( \log(k) \log \left ( \frac{1}{\tau} \right ) \right )  \leq n$, and a subsampling parameter $s$ as in Lemma \ref{lemma_fullrank}. Assume $\text{rank}(\bA)\geq k$. Then, if $\mathbf{Q}$ is computed by Algorithm~\ref{alg2}, for all $u,t \ge 1$, it holds
    \begin{equation*}\label{Frobenius_error_bound_sRRSVD}
        \| \bA - \mathbf{QQ}^T \bA \|_F \leq  \|\mathbf{\Sigma}_2 \|_F + \frac{\sigma_{k+1}}{\sigma_k} \sqrt{\frac{3 k(1+\eta)}{(1-\delta) (\ell+1)}} \cdot t \left ( \sqrt{k}\cdot \| \mathbf{\Sigma}_2 \|_F + \| \mathbf{\Sigma}_2 \|_2 \cdot u \right ),
    \end{equation*}
    with failure probability at most $\tau + k \cdot \left ( \frac{\mathtt e^{-\delta}}{(1-\delta)^{1-\delta}} \right )^{\alpha_1 \log (k)} + (n-k) \cdot \left ( \frac{\mathtt e^{\eta}}{(1+\eta)^{1+\eta}} \right )^{\alpha_2 \log (n-k)} + t^{-\ell} + \mathtt e^{-\frac{u^2}{2}}$, with $\alpha_1, \alpha_2, \delta$ and $\eta$ as in Lemma \ref{lemma_fullrank}.
\end{theorem}
}
\begin{proof}
    By Lemma \ref{lemma_fullrank} we have that $\widetilde{\mathbf{H}}_{1}=\mathbf{\Sigma}_{1} \mathbf{U}_1^T \mathbf{E} \mathbf{\Omega}$ has full rank with probability $1 - \tau - k \cdot \left ( \frac{\mathtt e^{-\delta}}{(1-\delta)^{1-\delta}} \right )^{\alpha_1 \log (k)}$. Thus, by Theorem~\ref{deterministic_error_bound} it holds
    \begin{equation*}
        \| \bA-\mathbf{QQ}^T \bA \|_F \leq \| \mathbf{\Sigma}_2 \|_F + \| \mathbf{\Sigma}_2 \widetilde{\mathbf{H}}_{2} \widetilde{\mathbf{H}}_{1}^\dagger \|_F \leq \| \mathbf{\Sigma}_2 \|_F + \sigma_{k+1} \| \mathbf{\Sigma}_2 \mathbf{\Omega}_2 \mathbf{\Sigma}_1^{-1} \|_F \| \mathbf{\Omega}_1^\dagger \|_F,
    \end{equation*}
    with failure probability at most $\tau + k \cdot \left ( \frac{\mathtt e^{-\delta}}{(1-\delta)^{1-\delta}} \right )^{\alpha_1 \log (k)}$, where $\mathbf{\Omega}_1 \vcentcolon = \mathbf{U}_1^T \mathbf{E} \mathbf{\Omega}$ and $ \mathbf{\Omega}_2 \vcentcolon = \mathbf{U}_2^T \mathbf{E} \mathbf{\Omega}$. Since $\mathbf{\Omega}_1$ and $\mathbf{\Omega}_2$ are not independent, we bound their norms separately.
    
    By Proposition \ref{bound_pseudoinverse_non_standard_Gaussian_matrix}, we know that, for $t \ge 1$,
    \begin{equation*}
        \| \mathbf{\Omega}_1^\dagger \|_F \leq \sqrt{\frac{3 \cdot \| (\mathbf{U}_1^T \mathbf{E})^{\dagger} \|_F^2}{\ell+1}} \cdot t,
    \end{equation*}
    with probability at least $1 - t^{-\ell}$.  Therefore, thanks to~\eqref{bound_frobenius_norm_UT1_E}, we have 
    \begin{equation*}
        \| \mathbf{\Omega}_1^\dagger \|_F \leq \sqrt{\frac{3 \cdot km}{s(1-\delta)(\ell+1)}} \cdot t,
    \end{equation*}
    holding true with failure probability $t^{-\ell}+k \cdot \left ( \frac{\mathtt e^{-\delta}}{(1-\delta)^{1-\delta}} \right )^{\alpha_1 \log (k)}$.    
    
    Now, if we define the function $h(\mathbf{X}) \vcentcolon = \| \mathbf{\Sigma}_2 \mathbf{U}_2^T \mathbf{E} \mathbf{X} \mathbf{\Sigma}_1^{-1} \|_F$, we can determine its Lipschitz constant $L$ by using the lower triangle inequality and standard norm estimates:
    \begin{align*}
        | h(\mathbf{X}) - h(\mathbf{Y}) | &= \left | \| \mathbf{\Sigma}_2 \mathbf{U}_2^T \mathbf{E} \mathbf{X} \mathbf{\Sigma}_1^{-1} \|_F - \| \mathbf{\Sigma}_2 \mathbf{U}_2^T \mathbf{E} \mathbf{Y} \mathbf{\Sigma}_1^{-1} \|_F \right | \\
        &\leq \| \mathbf{\Sigma}_2 \mathbf{U}_2^T \mathbf{E} (\mathbf{X} - \mathbf{Y}) \mathbf{\Sigma}_1^{-1} \|_F \leq \frac{1}{\sigma_k} \| \mathbf{U}_2^T \mathbf{E} \|_2 \| \mathbf{\Sigma}_2 \|_2 \| \mathbf{X} - \mathbf{Y} \|_F.
    \end{align*}
    Hence, $L \leq \frac{1}{\sigma_k} \| \mathbf{U}_2^T \mathbf{E} \|_2 \| \mathbf{\Sigma}_2 \|_2$. 
    
    Observe that $h(\mathbf{\Omega}) = \| \mathbf{\Sigma}_2 \mathbf{\Omega}_{2} \mathbf{\Sigma}_1^{-1}\|_F$. The H\"older inequality and relation \eqref{Expected_Frobenius_norm_Gaussian_matrix} of Proposition \ref{Expected_norm_Gaussian_matrix} imply that
    \begin{align*}
        \E \left [ h(\mathbf{\Omega}) \right ] &\leq \E \left [ \| \mathbf{\Sigma}_2 \mathbf{U}_2^T \mathbf{E} \mathbf{\Omega} \mathbf{\Sigma}_1^{-1} \|_F^2 \right ]^{\frac{1}{2}} \\
        &\leq \| \mathbf{U}_2^T \mathbf{E} \|_2 \| \mathbf{\Sigma}_2 \|_F \| \mathbf{\Sigma}_1^{-1} \|_F \leq  \frac{\sqrt{k}}{\sigma_k} \| \mathbf{U}_2^T \mathbf{E} \|_2 \| \mathbf{\Sigma}_2 \|_F.
    \end{align*}
    Now, we can apply the concentration of measure inequality, Proposition \ref{concentration_functions}, to the random variable $h(\mathbf{\Omega}) = \| \mathbf{\Sigma}_2 \mathbf{\Omega}_{2} \mathbf{\Sigma}_1^{-1} \|_F$, thus obtaining
    \begin{equation*}
        \| \mathbf{\Sigma}_2 \mathbf{\Omega}_{2} \mathbf{\Sigma}_1^{-1} \|_F \leq \frac{1}{\sigma_k} \| \mathbf{U}_2^T \mathbf{E} \|_2 \left ( \sqrt{k} \cdot \| \mathbf{\Sigma}_2 \|_F + \| \mathbf{\Sigma}_2 \|_2 \cdot u \right ),
    \end{equation*}
    with probability at least $1-e^{-\frac{u^2}{2}}$, so that, thanks to Lemma~\ref{lemma_fullrank}, 
 \begin{equation*}
        \| \mathbf{\Sigma}_2 \mathbf{\Omega}_{2} \mathbf{\Sigma}_1^{-1} \|_F \leq \frac{1}{\sigma_k} \sqrt{\frac{(1+\eta)s}{m}} \left ( \sqrt{k} \cdot \| \mathbf{\Sigma}_2 \|_F + \| \mathbf{\Sigma}_2 \|_2 \cdot u \right ),
    \end{equation*}
    with failure probability $(n-k) \cdot \left ( \frac{\mathtt e^{\eta}}{(1+\eta)^{1+\eta}} \right )^{\alpha_2 \log (n-k)} + \mathtt e^{-\frac{u^2}{2}}$.
    
    By putting all the pieces together, the result follows.
\end{proof}

The result in Theorem~\ref{teo_Frobenius_error_sRRSVD} may be far from being sharp and the Rsub-RSVD can work much better than predicted in practice; see, e.g., the examples in section~\ref{Applications}.
The main goal of Theorem~\ref{teo_Frobenius_error_sRRSVD} is to highlight the right set of assumptions under which the Rsub-RSVD error is guaranteed to remain under control. To this end, we stress once again the importance of combining subsampling with range finder in order to have quantities, like, e.g.,~\eqref{bound_frobenius_norm_UT1_E}, which depend on $k$ and not on $n$.
 Bounds for an approach similar to Rsub-RSVD can be found in~\cite{SketchyCoreSVD}. Notice, however, that the impact of the subsambling step on our error bound is encoded in $\delta$ and $\eta$, values that are strictly related to the behavior of $\mathbf{U}_1^T\mathbf{E}$ and $\mathbf{U}_2^T\mathbf{E}$, and not only on the subsampling parameter $s$. In particular, in favorable scenarios where $\mu(\bA)\geq\max\{\mu(\mathbf{U}_1),\mu(\mathbf{U}_2)\}$ is small, we would like to choose a small $s$. In this case, the bounds in~\cite{SketchyCoreSVD} would increase whereas Theorem~\ref{teo_Frobenius_error_sRRSVD} would not, as long as the selected $s$ fulfills the assumptions in Lemma~\ref{lemma_fullrank}.

Given the presence of numerous parameters and constants, the bound in Theorem~\ref{teo_Frobenius_error_sRRSVD} may look tricky to interpret. However, by making appropriate choices for the parameters, one can obtain clearer results.
For instance, let us consider the matrices $\bA_1$ and $\bA_2$ of Example \ref{eq:matrix_examples}. Thus $m = 300\,000, n = 300$, and set our parameters: $k = 30$, $\ell = 10$, $t = 1.7$, $u = \sqrt{3\ell}$, $\alpha_1 = 7$, $\alpha_2 = 1.65$, $\delta = 0.75$, $\eta = 2$. We compute $\mu(\mathbf{U}_1^{(\bA_1)}) \approx 1.3456 \times 10^{-3}$, $\mu(\mathbf{U}_2^{(\bA_1)}) \approx 3.5653 \times 10^{-3}$ and $\mu(\mathbf{U}_1^{(\bA_2)}) \approx 1.2905 \times 10^{-3}$, $\mu(\mathbf{U}_2^{(\bA_2)}) \approx 4.1194 \times 10^{-3}$. Now, if we select $s^{(\bA_1)} = 10\,000$ and $s^{(\bA_2)} = 12\,000$ respectively, they fulfill the hypothesis of Theorem \ref{teo_Frobenius_error_sRRSVD}. Therefore, we have that the failure probability of getting
\begin{equation*} 
    \|\mathbf{A}_i-\mathbf{QQ}^T\mathbf{A}_i\|_F \leq \| \mathbf{\Sigma}_2^{(\bA_i)} \|_F + 93 \cdot \frac{\sigma_{31}^{(\bA_i)}}{\sigma_{30}^{(\bA_i)}} \left ( \| \mathbf{\Sigma}_2^{(\bA_i)} \|_F + \|\mathbf{\Sigma}_2^{(\bA_i)} \|_2 \right ), \qquad i=1,2,
\end{equation*}
is less than $1 \%$. Although the employed values of $s^{(\bA_1)}$ and $s^{(\bA_2)}$ may seem rather large, we remind the reader that they constitute a selection of only up to 4\% of the rows of the corresponding matrix. Moreover, they have been chosen for illustrative purposes only. Smaller values of $s$ can still work in practice; see section~\ref{Applications}.

 In practical situations where we cannot compute $\mu(\bA)$, the employment of a too small $s$ may lead to failure. On the other hand, by having prior knowledge about the application setting of interest, one may expect $\bA$ to have low coherence as the latter has a proper physical meaning in certain contexts. For instance, when clustering objects from partial observations, $\mu(\bA)$ is a function of the minimum cluster size~\cite{Coherence1}. Similarly, in recovering spectrally sparse signals, low coherence means that the supporting frequencies are spread out; a natural assumption in this framework~\cite{Coherence2}.

We conclude this section by mentioning that our subsampling approach may be reminiscent of the so-called \emph{multisketching} paradigm used in, e.g., \cite{Sobczyk2021} for leverage score estimation and~\cite{SzyldMultisketch} for randomized QR computations. In particular, in these papers the authors employ 
a sketching matrix $\mathbf{\Omega}\in\R^{m \times s_2}$ of the form $\mathbf{\Omega}=\mathbf{CG}$
 where $\mathbf{G} \in \R^{s_1\times s_2}$ is Gaussian and $\mathbf{C} \in \R^{m \times s_1}$ is a CountSketch, i.e., it consists of $s$ randomly-picked columns of the identity with a possible change of sign. Even though, at a first glance, our approach and the one proposed in~\cite{Sobczyk2021,SzyldMultisketch} look very similar, there is a key difference. Indeed, the sketchings used in~\cite{Sobczyk2021,SzyldMultisketch} must amount to \emph{oblivious subspace embeddings}, namely they must distort the length of vectors in a controlled manner with high probability; see, e.g.,~\cite[Section 8]{Martinsson_Tropp_2020}. To be able to fulfill such important property with high probability, rather strict conditions on the skecthing dimension must be considered. In particular, the column dimensions $s_1$ and $s_2$  must be such that $s_1=\mathcal{O}(n^2)$ and $s_2=\mathcal{O}(n)$; see~\cite[Section 3.2]{SzyldMultisketch}. In our notation, $s_2=k+\ell$ and $s_1=s$. Selecting $k+\ell\geq n$ is not an option in our framework as this would lead to the computation of a full SVD approximation. Moreover, we choose $s$ in order to compute a meaningful approximation to $\text{Range}(\mathbf{A}^T)$ and small values of $s$, $s\,\propto\, (k+l)\leq n$, are often more than sufficient.

\subsection{Single pass and streaming setting}~\label{Streaming setting}
In many real-world application settings, the matrix $\mathbf{A}$ one deals with is extremely big, to the point that it does not necessarily fit in core memory. In this scenario, transferring data from slow memory to the computing nodes often dominates the overall cost of the procedure. To mitigate this time-consuming issue, so-called \emph{single-pass} schemes for SVD approximations have been developed; see, e.g.,~\cite{Troppetal2019,Troppetal2017}. The main point of these algorithms is to visit $\mathbf{A}$ only once, by possibly computing several matrix-matrix products with $\mathbf{A}$ in parallel. 

In case we are able to select what rows of $\mathbf{A}$ to be sent to core memory, our Rsub-RSVD does fit in the single-pass framework. Indeed, the matrix $\widetilde{\mathbf{A}}\in\mathbb{R}^{s\times n}$ in Algorithm~\ref{alg2} can be allocated in the core memory as this matrix inherits the possible sparsity of $\mathbf{A}$. Moreover, an underlying assumption in schemes for computing rank-$k$ SVD-like approximations is the possibility to store the dense matrices containing the approximate singular vectors, namely the matrices $\mathbf{U}\in\mathbb{R}^{m\times (k+\ell)}$ and $\mathbf{V}\in\mathbb{R}^{n\times (k+\ell)}$ in our case. Since we always assume $s\ll m$, storing $\widetilde{\mathbf{A}}\in\mathbb{R}^{s\times n}$ does not remarkably increase the memory demand of the whole procedure. 

Once line~\ref{alg2_1_line1} in Algorithm~\ref{alg2} has been performed, the $s$ rows stored in $\widetilde{\mathbf{A}}\in\mathbb{R}^{s\times n}$ can be used in line~\ref{alg2_1_line1bis} so that only the remaining $m-s$ rows of $\mathbf{A}$ need to be transferred to fast memory to compute $\mathbf{AP}$. This shows that every single row of $\mathbf{A}$ has to be copied from slow to fast memory only once making the Rsub-RSVD algorithm rather appealing in application settings and computing environments where transferring data is an issue.

Our Rsub-RSVD algorithm is suitable also for streaming data models where the matrix $\mathbf{A}$ comes as a sum of ordered updates of the form 
$$\mathbf{A}=\mathbf{H}_1+\mathbf{H}_2+\mathbf{H}_3+\cdots$$
where the $\mathbf{H}_i$'s are, e.g., rank-1 matrices coming one at the time and that need to be discarded once processed; see, e.g.,~\cite{DataStreams}.

To show that this is indeed the case, assume that Algorithm~\ref{alg2} has been applied to $\mathbf{A}$, with all the quantities computed by the algorithm being available, and we acquire the update $\mathbf{H}=xy^T$, $x\in\mathbb{R}^m$, $y\in\mathbb{R}^n$, so that an approximation to the SVD of $\mathbf{A}_1:=\mathbf{A}+\mathbf{H}$ is sought. 

First of all, we need to compute the QR factorization
\begin{equation}\label{eq:updateQR_rank1}
\mathbf{P}_1\mathbf{T}_1=\widetilde{\mathbf{A}}_1^T\mathbf{\Omega}=(\mathbf{A}+xy^T)^T\mathbf{E}\mathbf{\Omega}=\mathbf{PT}+y(x^T\mathbf{E}\mathbf{\Omega}),
\end{equation}
in line~\ref{alg2_1_line1} of Algorithm~\ref{alg2}. As shown above, this corresponds to updating the QR factorization $\mathbf{PT}$ of $\mathbf{A}$. Following~\cite[Section 12.5.1]{Golub2013}, this task can be cheaply carried out. In particular, let $\mathbf{J}\in\mathbb{R}^{n\times n}$ be the composition of $n-1$ Givens rotations such that $\mathbf{J}^Ty=\pm\|y\|_2e_1$. 
Moreover, assume for the moment that a full QR factorization of $\mathbf{A}^T\mathbf{E\Omega}$ was performed in place of a skinny one, namely we can write 
$$\mathbf{A}^T\mathbf{E\Omega}=[\mathbf{P},\mathbf{K}]\begin{bmatrix}
\mathbf{T}\\
0\\
\end{bmatrix},
$$
where the orthogonal columns of $\mathbf{K}=[\kappa_1,\ldots,\kappa_{n-(k+\ell)}]$ span the kernel of $\mathbf{A}^T\mathbf{E\Omega}$. As we will see in the following, we will need one of the columns of $\mathbf{K}$. This column, that we generically name $\kappa$, can be obtained by orthonormalizing a random vector with respect to $\mathbf{P}$, namely, given $v\in\mathbb{R}^n$, we can compute $\kappa=v-\mathbf{PP}^Tv$ and then normalize it, thus avoiding the computation of a full QR decomposition of $\mathbf{A}^T\mathbf{E\Omega}$. Once $\kappa$ is computed, we also need to allocate the vector $z=\mathbf{A}\kappa$. Both $\kappa$ and $z$ can be constructed during the first run of Rsub-RSVD, the one involving $\mathbf{A}$ only.

Then, we denote
\begin{align*}
\mathbf{J}^T\left(\begin{bmatrix}
\mathbf{T}\\
0\\
\end{bmatrix}
+y (x^T\mathbf{E}\mathbf{\Omega})   \right)=&
\begin{bmatrix}
\widehat{\mathbf{J}}^T\begin{bmatrix}
\mathbf{T}\\
0\\
\end{bmatrix}\pm\|y\|_2e_1(x^T\mathbf{E}\mathbf{\Omega})\\
0
\end{bmatrix}
=:\begin{bmatrix}
\mathbf{S}\\
0\\
\end{bmatrix},
\end{align*}
where $\mathbf{S}\in\mathbb{R}^{(k+\ell+1)\times(k+\ell)}$ and $\widehat{\mathbf{J}}\in\mathbb{R}^{(k+\ell+1)\times(k+\ell+1)}$ is the principal submatrix of $\mathbf{J}$. Also notice the abuse of notation: $0$ in the left-hand side of the equation above denotes a $n-(k+\ell)\times (k+\ell)$ zero matrix whereas in the term in the middle is both a row vector of zeros of length $k+\ell$ (top term) and a $n-(k+\ell+1)\times (k+\ell)$ zero matrix (bottom term). The latter holds also for the right-hand side.

It is easy to show that $\mathbf{S}$ is upper Hessenberg. Indeed, $\mathbf{T}$ is upper triangular, so that $\widehat{\mathbf{J}}^T\begin{bmatrix}
\mathbf{T}\\
0\\
\end{bmatrix}$ is upper Hessenberg and this structure is maintained in $\mathbf{S}$ as well. We can thus compute other $k+\ell$ Givens rotations to make $\mathbf{S}$ upper triangular. In particular, we can write $\mathbf{G}^T\mathbf{S}=\mathbf{T}_1$ where $\mathbf{G}\in\mathbb{R}^{(k+\ell+1)\times (k+\ell)}$ is orthogonal and $\mathbf{T}_1\in\mathbb{R}^{(k+\ell)\times(k+\ell)}$ is our updated upper triangular factor in~\eqref{eq:updateQR_rank1}. The orthogonal factor $\mathbf{P}_1$ is instead given by  $\mathbf{P}_1=[\mathbf{P},\kappa]\widehat{\mathbf{J}}\mathbf{G}$ where $\kappa$ is one of the columns of $\mathbf{K}$ above. 

We then proceed with line~\ref{alg2_1_line1bis} in Algorithm~\ref{alg2} and compute the product 
\begin{align*}
\mathbf{A}_1\mathbf{P}_1=\,&(\mathbf{A}+xy^T)[\mathbf{P},\kappa]\widehat{\mathbf{J}}\mathbf{G}=(\mathbf{A}[\mathbf{P},\kappa])\widehat{\mathbf{J}}\mathbf{G}+(xy^T[\mathbf{P},\kappa])\widehat{\mathbf{J}}\mathbf{G}\\
=&\,[\mathbf{QR},z]\widehat{\mathbf{J}}\mathbf{G}+(xy^T[\mathbf{P},\kappa])\widehat{\mathbf{J}}\mathbf{G}.
\end{align*}

The matrices $\mathbf{Q}$ and $\mathbf{R}$ and the vector $z$ are available from the previous run of the Rsub-RSVD method whereas $xy^T[\mathbf{P},\kappa]$ can be computed as soon as $x$ and $y$ are available. Any following operations in the computation of the QR of $\mathbf{A}_1\mathbf{P}_1$ does not involve either $x$ or $y$, and these vectors can thus be discarded. 

To conclude, once the update $\mathbf{H}=xy^T$ comes into play, one needs to compute the transformation $\mathbf{J}$ such that $\mathbf{J}^Ty=\pm\|y\|_2e_1$, the vector $x^T\mathbf{E\Omega}$, and the matrix $xy^T[\mathbf{P},\kappa]$. All these computations can be performed in parallel, as a preprocess to Algorithm~\ref{alg2}. After that, the vectors $x$ and $y$ can be discarded without jeopardizing the procedure illustrated above for updating the Rsub-RSVD approximation to $\mathbf{A}+xy^T$. 


\section{Applications}\label{Applications}
In the following sections we will explore two practical application settings where our novel Rsub-RSVD turns out to be very competitive with respect to the state-of-the-art RSVD.

\subsection{DEIM induced CUR}~\label{DEIM induced CUR}
The first natural setting where SVD-like routines find application is the construction of low-rank matrix approximations. In particular, we are interested in computing rank-$k$ CUR approximations to a given matrix $\mathbf{A}\in\mathbb{R}^{m\times n}$, namely $\mathbf{A}\approx \mathbf{CUR}$. In this setting, the matrices $\mathbf{C}\in\mathbb{R}^{m\times k}$ and $\mathbf{R}\in\mathbb{R}^{k\times n}$ consist of $k$ columns and rows of $\mathbf{A}$, respectively, whereas $\mathbf{U}\in\mathbb{R}^{k\times k}$ is chosen to minimize the approximation error. 
One of the main advantages of CUR approximations is the preservation of the possible sparsity of the original $\mathbf{A}$ also in the computed low-rank approximation. This is not the case if, e.g., we consider truncated SVD approximations. Moreover, in certain settings as, e.g., data science applications, where the columns and rows of $\mathbf{A}$ do have a particular meaning, the CUR factorization provides an approximation whose interpretability echos that of the original dataset.

One of the main tasks in computing CUR approximations is selecting the \emph{right} columns and rows. Many different strategies have been proposed in the literature. A partial list of procedures includes schemes based on pivoted, truncated QR decompositions~\cite{Stewart99}, volume maximization~\cite{Cortinovisetal2020,Thurauetal}, and leverage scores~\cite{Bou2017,Drineas2008,Wang2013}.
Here we focus on the DEIM induced CUR factorization~\cite{DEIM_CUR}, namely a CUR scheme where the column and row indexes to construct $\mathbf{C}$ and $\mathbf{R}$ are selected by means of the Discrete Empirical Interpolation Method (DEIM)~\cite{DEIM}. We draft the DEIM-CUR scheme in Algorithm~\ref{alg_DEIM_CUR}.

\begin{algorithm}[t]
    \caption{DEIM induced CUR Decomposition~\cite{DEIM_CUR}}\label{alg_DEIM_CUR}
    \begin{algorithmic}[1]
        \Require $\mathbf{A}\in\mathbb{R}^{m\times n}$, $k>0$.
        \Ensure Rank-$k$ CUR approximation to $\bA$, i.e. $\mathbf{CUR}\approx\mathbf{A}$.
        \State Compute a rank-$k$ SVD-like approximation $\mathbf{W\Sigma V}^T\approx\mathbf{A}$\label{DEIM_CUR_line1}
        \State Compute the row index set $p\in\mathbb{N}^k$ by applying DEIM to $\mathbf{W}$
        \State Set $\mathbf{R}= \bA(p,:)$
        \State Compute the column index set $q\in\mathbb{N}^k$ by applying DEIM to $\mathbf{V}$ 
        \State Set $\mathbf{C}= \bA(:,q)$
        \State Compute $\mathbf{U} = \mathbf{C}^\dagger \bA \mathbf{R}^\dagger$
    \end{algorithmic}
\end{algorithm}

The most time-consuming step of Algorithm~\ref{alg_DEIM_CUR} is the computation of the SVD-like approximation $\mathbf{W\Sigma V}^T$ to $\bA$ in line~\ref{DEIM_CUR_line1}. Clearly, this task can be performed by applying any suitable scheme. In the following example we compare the performance achieved by using our novel strategies, i.e., R-RSVD, Rsub-RSVD, and the standard RSVD in line~\ref{DEIM_CUR_line1}
of Algorithm~\ref{alg_DEIM_CUR}. This comparison will be 
in terms of accuracy, in both the computed SVD and CUR approximations, and the computational cost of the overall procedure.

\begin{experiment}\label{Ex2}
We consider the same matrices $\mathbf{A}_1$ and $\mathbf{A}_2$ used in Example~\ref{Ex1}.
We fix $k=30$, $\ell=5$ and compute their randomized SVD approximation in three different ways: by the standard RSVD scheme (Algorithm~\ref{range_finder}), the row-aware RSVD procedure (Algorithm~\ref{alg1}), and the subsampled row-aware routine (Algorithm~\ref{alg2}) with a subsampling parameter $s$ of the form
$s=\alpha(k+\ell)$, $\alpha\in\{3,\ldots,14\}$. We then feed Algorithm~\ref{alg_DEIM_CUR} with the approximate singular vectors computed by the aforementioned routines to get a CUR approximation to $\mathbf{A}_1$ and $\mathbf{A}_2$.

\begin{figure}[t]
  \centering
      \begin{tikzpicture}
  \begin{semilogyaxis}[width=.8\linewidth, height=.35\textheight,
  legend pos = north east,legend style={font={\small\arraycolsep=1pt}},
  xlabel = $s$, ylabel = Relative Error]
  \addplot[mark=o,blue,solid,mark options={fill=blue}] table[x index=0, y index=1] {data/SPECTRAL_ERR_SVD_and_CUR_FAST_DECAY.txt};
  \addplot[mark=o,blue,dashed,mark options=solid] table[x index=0, y index=2]  {data/SPECTRAL_ERR_SVD_and_CUR_FAST_DECAY.txt};
  \addplot[mark=square,red,solid,mark options=solid] table[x index=0, y index=3]  {data/SPECTRAL_ERR_SVD_and_CUR_FAST_DECAY.txt};
  \addplot[mark=square,red,dashed,mark options=solid] table[x index=0, y index=4]{data/SPECTRAL_ERR_SVD_and_CUR_FAST_DECAY.txt};
  \addplot[black, mark=diamond,mark size = 3pt,mark options=solid] table[x index=0, y index=5]
{data/SPECTRAL_ERR_SVD_and_CUR_FAST_DECAY.txt};
  \addplot[black,mark=diamond, dashed,mark size = 3pt,mark options=solid] table[x index=0, y index=6]
{data/SPECTRAL_ERR_SVD_and_CUR_FAST_DECAY.txt};
  \legend{Error SVD (RSVD),Error CUR (RSVD), Error SVD (R-RSVD),Error CUR (R-RSVD),Error SVD (Rsub-RSVD),Error CUR (Rsub-RSVD)};
  \end{semilogyaxis}
  \end{tikzpicture}

    \begin{tikzpicture}
  \begin{semilogyaxis}[width=.8\linewidth, height=.35\textheight,
  legend pos = north east,legend style={font={\small\arraycolsep=1pt}},
  xlabel = $s$, ylabel = Relative Error]
  \addplot[mark=o,blue,solid,mark options={fill=blue}] table[x index=0, y index=1] {data/SPECTRAL_ERR_SVD_and_CUR_SLOW_DECAY.txt};
  \addplot[mark=o,blue,dashed,mark options=solid] table[x index=0, y index=2]  {data/SPECTRAL_ERR_SVD_and_CUR_SLOW_DECAY.txt};
  \addplot[mark=square,red,solid,mark options=solid] table[x index=0, y index=3]  {data/SPECTRAL_ERR_SVD_and_CUR_SLOW_DECAY.txt};
  \addplot[mark=square,red,dashed,mark options=solid] table[x index=0, y index=4]{data/SPECTRAL_ERR_SVD_and_CUR_SLOW_DECAY.txt};
  \addplot[black, mark=diamond,mark size = 3pt,mark options=solid] table[x index=0, y index=5]
{data/SPECTRAL_ERR_SVD_and_CUR_SLOW_DECAY.txt};
  \addplot[black,mark=diamond, dashed,mark size = 3pt,mark options=solid] table[x index=0, y index=6]
{data/SPECTRAL_ERR_SVD_and_CUR_SLOW_DECAY.txt};
  \legend{Error SVD (RSVD),Error CUR (RSVD), Error SVD (R-RSVD),Error CUR (R-RSVD),Error SVD (Rsub-RSVD),Error CUR (Rsub-RSVD)};
  \end{semilogyaxis}
  \end{tikzpicture}

   \caption{Example~\ref{Ex2}. Relative approximation errors $\|\bA-\mathbf{W\Sigma V}^T\|_2/\|\mathbf{A}\|_2$ (solid lines) and $\|\bA-\mathbf{CUR}\|_2/\|\mathbf{A}\|_2$ (dashed lines). Top: $\mathbf{A}_1$. Bottom: $\mathbf{A}_2$.
The SVD approximation, also used in the CUR computation, is constructed in three different ways: RSVD (circles), R-RSVD (squares), and Rsub-RSVD (diamonds) for $k=30$ and $\ell=5$. In the Rsub-RSVD we vary the subsambling parameter $s$ as $s=\alpha(k+\ell)$, $\alpha\in\{3,\ldots,14\}$.}
   \label{fig:CUR1}
  \end{figure}
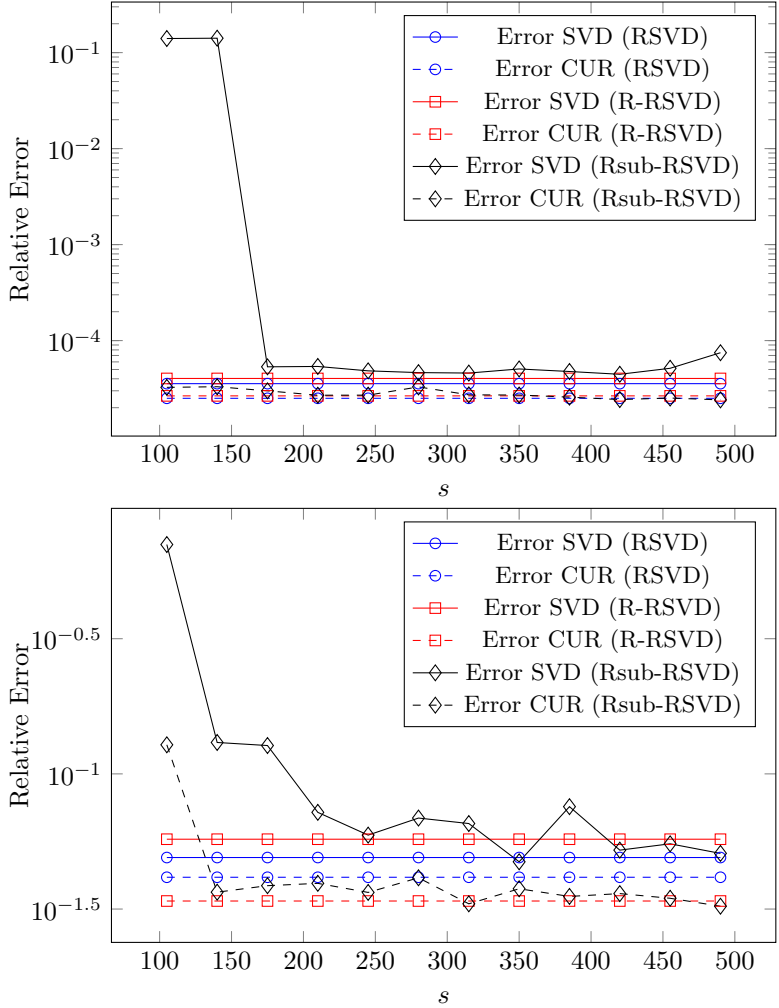

In Figure~\ref{fig:CUR1} we report the obtained relative errors $\|\mathbf{A}-\mathbf{W\Sigma V}^T\|_2/\|\mathbf{A}\|_2$ (solid lines) and $\|\mathbf{A}-\mathbf{CUR}\|_2/\|\mathbf{A}\|_2$ (dashed lines) for $\mathbf{A}=\mathbf{A}_1$ (top) and $\mathbf{A}=\mathbf{A}_2$ (bottom). We use different markers to identify what routine has been used in the SVD computation: circles for RSVD, squares for R-RSVD, and diamonds for Rsub-RSVD. We remind the reader that the results related to RSVD and R-RSVD do not depend on $s$. Therefore, they are displayed as constant, horizontal lines.

We first focus on the results related to $\mathbf{A}_1$ (Figure~\ref{fig:CUR1} -- top) which has a rather fast decay in its singular values. We can see that RSVD and R-RSVD obtain very similar errors both in the SVD and in the CUR. On the other hand, Rsub-RSVD needs $s\geq 5(k+\ell)$ to obtain comparable errors in the SVD. Nevertheless, what we believe is surprising is that, also for rather small values of $s$, Rsub-RSVD achieves a very good relative error in the CUR. This means that, even though the approximate singular vectors computed by Rsub-RSVD might be scarce in attaining satisfactory errors in the SVD, they are still informative in providing sensible column and row index sets for the DEIM-CUR approximation. Similar considerations can be made also for $\bA_2$, the matrix with a slower singular value decay.

We try to explore further this interesting phenomenon with the following experiment. We consider $\bA_1$, compute its first \emph{exact} right singular vectors by the Matlab function {\tt svd}, and collect the first $k=30$ in the matrix $\mathbf{W}_*$. Then, we approximate these first $k$ singular vectors by both the RSVD and Rsub-RSVD. In particular, we set $k=30$, $\ell=5$, and $s=3(k+\ell)$ or $s=5(k+\ell)$ in Algorithm~\ref{range_finder} and \ref{alg2}. The computed left singular vectors are collected in $\mathbf{W}$. In Figure~\ref{fig:angles} we report the entries of the matrix $|\mathbf{W}^T\mathbf{W}_*|$ in logarithmic scale: RSVD on the left, Rsub-RSVD with $s=3(k+\ell)$ in the center, and Rsub-RSVD with $s=5(k+\ell)$ on the right. We remind the reader that, since both $\mathbf{W}$ and $\mathbf{W}_*$ have columns with unit norm, the $(i,j)$-th entry of $|\mathbf{W}^T\mathbf{W}_*|$ amounts to the absolute value of the cosine of the angle between the $i$-th column of $\mathbf{W}$ and the $j$-th column of $\mathbf{W}_*$. Therefore, the ideal situation would be to have a matrix $|\mathbf{W}^T\mathbf{W}_*|$ with ones on the main diagonal and zero otherwise. 

\begin{figure}[t]
        \centering
        \includegraphics[trim={0 7cm 0 0},width=.9\linewidth]{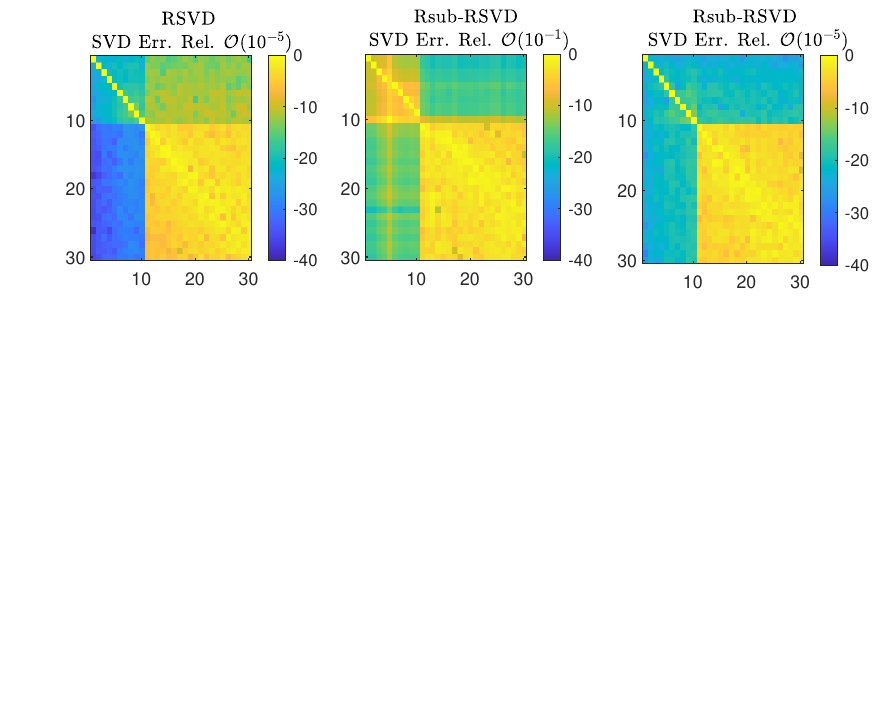}
        
        \caption{Example~\ref{Ex2}. $|\mathbf{W}^T\mathbf{W}_*|$ (logarithmic scale) where $\mathbf{W}_*$ denotes the first $k=30$ \emph{exact} left singular vectors (computed by the Matlab function {\tt svd}) whereas $\mathbf{W}$ denotes the first $k$ left singular vectors computed by either RSVD (left) or Rsub-RSVD (center and right). In the latter algorithms we set $\ell=5$ and either $s=3(k+\ell)$ (center) or $s=5(k+\ell)$ (right). }
        \label{fig:angles}
\end{figure}

We first focus on the left plot of Figure~\ref{fig:angles}, namely the results provided by RSVD. We can notice that the entries of the first ten\footnote{We remind the reader that $\bA_1$ is constructed to have a significant gap between the 10th and 11th singular values.} columns and rows of $|\mathbf{W}^T\mathbf{W}_*|$ have very small magnitude moving away from the main diagonal. This means that the RSVD is able to successfully distinguish the singular vectors related to the first ten, largest singular values  from the other ones.  Moreover, since the principal $10\times 10$ submatrix of $|\mathbf{W}^T\mathbf{W}_*|$ has very small entries, except the diagonal ones, the first ten columns of $\mathbf{W}$ are very much aligned with the corresponding columns of $\mathbf{W}_*$. This does not really happen with the other approximate singular vectors (bottom-right block of $|\mathbf{W}^T\mathbf{W}_*|$). 

If $\mathbf{W}$ is computed by the Rsub-RSVD with $s=3(k+\ell)$ (Figure~\ref{fig:angles} - center), we can see that we are still able to identify two distinct subspaces: the first one is related to the ten dominant singular vectors whereas the second one is spanned by the remaining approximate singular vectors. On the other hand, the first ten columns of $\mathbf{W}$ are no longer aligned with the first ten columns of $\mathbf{W}_*$ as happened with the RSVD. While the latter drawback may be connected to the poor approximation in the SVD - the error is of the order of $10^{-1}$ -- the identification of the right subspaces, also for small $s$, is probably responsible for the competitive CUR accuracy records. Indeed, the matrix $\mathbf{W}$ is still able to provide a meaningful index set when fed to DEIM.
The main impact of increasing $s$  (Figure~\ref{fig:angles} - right) is the recovery of the alignment between the first ten columns of $\mathbf{W}$ and those of $\mathbf{W}_*$, with a significant decrease in the SVD error.

Even though these considerations may provide some hints in explaining the phenomenon observed in Figure~\ref{fig:CUR1}, namely we can get small errors in the CUR also when the Rsub-RSVD attains large errors in the SVD, further study are certainly needed in this direction.

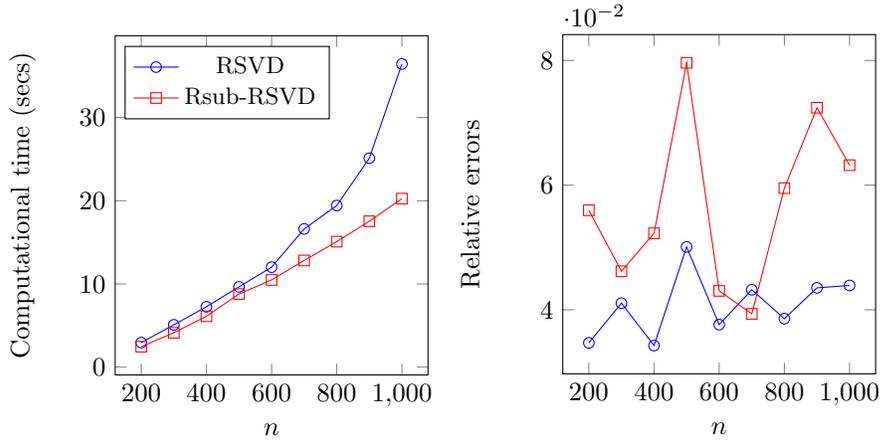
\begin{figure}[t]
  \centering
  \begin{minipage}{0.45\textwidth}
  \begin{tikzpicture}
  \begin{axis}[width=.98\linewidth, height=.29\textheight,
  legend pos = north west,legend style={font={\small\arraycolsep=1pt}},
  xlabel = $n$, ylabel = Computational time (secs)]
  \addplot[mark=o,blue,solid] table[x index=0, y index=1]  {data/CPU_time_and_Error.txt};
  \addplot[mark=square,red,solid] table[x index=0, y index=2]  {data/CPU_time_and_Error.txt};
  \legend{RSVD,Rsub-RSVD};
  \end{axis}
  \end{tikzpicture}
   \end{minipage}~\begin{minipage}{0.45\textwidth}
  \centering
  \begin{tikzpicture}
  \begin{semilogyaxis}[width=0.98\linewidth, height=.29\textheight,
  legend pos = north east,
  xlabel = $n$, ylabel = Relative errors]
  \addplot[mark=o,blue,solid] table[x index=0, y index=3]  {data/CPU_time_and_Error_New.txt};
  \addplot[mark=square,red,solid] table[x index=0, y index=4]  {data/CPU_time_and_Error_New.txt};
  
  \end{semilogyaxis}
  \end{tikzpicture}
  
\end{minipage}
 
   \caption{Example~\ref{Ex2}. RSVD (blue line with circles) and Rsub-SVD (red line with squares) with parameters $k=30$, $\ell=5$, and $s=4(k+\ell)$ applied to the matrix $\bA_2$ by varying its column dimension $n$.    
   Left: Computational timings. Right: Relative error $\|\bA_2-\mathbf{W\Sigma V}^T\|/\|\bA_2\|$ averaged over 10 runs.
   }
   \label{fig:CUR2}
  \end{figure}

We conclude this experiment by reporting on the computational performance of RSVD and Rsub-RSVD in terms of running times.
In particular, we consider the matrix $\bA_2$ in~\eqref{eq:matrix_examples} and vary its column dimension $n$. It is important to mention that, even though the sparse vectors $\mathbf{y}_j\in\mathbb{R}^n$ used in the construction of $\bA_2$ have a constant density parameter (0.025), by increasing $n$ we also increase the percentage of nonzero entries\footnote{The percentage of nonzero entries of $\bA_2$ grows more or less linearly with $n$. We start with approximately $26.5\%$ nonzero entries in $\bA_2$ for $n=200$, up to $78.5\%$ for $n=1\;000$.} of $\bA_2$. Therefore, the larger $n$, the less sparse $\bA_2$. This is the scenario where the computational savings coming from the subsampling in Rsub-RSVD are probably the most significant. Indeed, for very sparse matrices for which the matrix-matrix product results to be extremely cheap, Rsub-RSVD may not be able to remarkably cut down the cost of computing SVD approximations.

In Figure~\ref{fig:CUR2} (left) we report the computational timings of RSVD and Rsub-RSVD applied to $\bA_2$ when varying $n$ from 200 to $1\,000$. The parameters used in the two routines are $k=30$, $\ell=5$, and $s=4(k+\ell)$.
From the results reported in the plot, we can see that Rsub-RSVD is always faster than RSVD, also for small $n$, and the gap between their performance broadens by increasing $n$. In particular, for this example, the computational timing of Rsub-RSVD grows linearly with $n$ whereas RSVD presents a much faster growth. On the other hand, RSVD and Rsub-RSVD attain a very similar relative error for all the values of $n$ we tested as illustrated in Figure~\ref{fig:CUR2} (right), even though we employ a constant $s$ in Rsub-RSVD.

In conclusion, Rsub-RSVD can significantly reduce the cost of computing SVD approximations, especially for denser matrices, while attaining competitive accuracy records.


\end{experiment}

\subsection{The L\"{o}wner framework}\label{The Loewner framework}
The L\"owner framework is one of the most successful data-driven model order reduction techniques~\cite[Chapter 8]{Loewner}. 
It was originally proposed in~\cite{Loewner1} for solving the generalized realization
problem coupled with tangential interpolation, but it quickly showed its potential in constructing reliable reduced models from frequency domain data~\cite{Loewner2} as well. 

Given $N$ points $f_j\in\mathbb{C}$ (which can represent frequencies) and the corresponding transfer function measurements $\mathbf{H}_j\in\mathbb{C}^{p\times q}$, $p,q\ll N,$ the main goal is to construct a rational transfer function $H(f)$ such that 
$$H(f_j)\approx \mathbf{H}_j,\quad\text{for all}\, j=1,\ldots,N.$$

For the sake of simplicity, in the following we will consider only single-input-single-output (SISO) models, namely $p=q=1$ and the $\mathbf{H}_j$'s are scalars. Nevertheless, our results hold for the multi-input-multi-output case ($p,q,>1)$ as well.

Most systems of interest are real, with their transfer function satisfying the complex
conjugate condition $H(\widebar f) = \widebar{H(f)}$, where with $\widebar x$ we denote the complex conjugate of $x$. Hence, the complex conjugate measurements $(\widebar f_j, \widebar{\mathbf{H}}_j)$ are often added to the dataset so that we deal with $2N$
pairs of points.

The first step in the L\"owner framework is partitioning the data $(f_j,\mathbf{H}_j)$ in two disjoint sets, each of them containing $N$ pairs. This partition influences the conditioning of the problem~\cite[Chapter 2.1]{IonitaPhD}
 and finding the optimal partition for each dataset is clearly beyond the scope of this
paper. We thus assume this partition is given and we are going to label the $N$ frequencies belonging to the first set as $\lambda_j$ and the related measurements as $\mathbf{H}_j^{(R)}$. Similarly, the $N$ frequencies in the second set are denoted by $\mu_i$ with $\mathbf{H}_i^{(L)}$ being the related measurements.
We then set up the data
matrices by building the L\"owner and shifted L\"owner matrices entry-wise based on the
chosen partition into right and left data. In particular,
the $(i,j)$-th entry of the L\"owner matrix $\mathbf{L}\in\mathbb{C}^{N\times N}$ is given by
$$\mathbf{L}_{i,j}=\frac{\mathbf{H}_i^{(L)}-\mathbf{H}_j^{(R)}}{\mu_i-\lambda_j}.$$
Similarly, the $(i,j)$-th entry of the shifted L\"owner matrix $\mathbf{S}\in\mathbb{C}^{N\times N}$ is defined as
$$\mathbf{S}_{i,j}=\frac{\mu_i\mathbf{H}_i^{(L)}-\lambda_j\mathbf{H}_j^{(R)}}{\mu_i-\lambda_j};$$
see, e.g.,~\cite[Chapter 8]{Loewner}. Notice that the definitions above are tailored to the SISO case. More general formulations involving tangential directions are usually employed when $p,q>1$; see, e.g.,~\cite{Loewner2}. 

Once the $N\times N$, dense matrices $\mathbf{L}$ and $\mathbf{S}$ are defined, the SVD of $\mathbf{S}-\widehat f \mathbf{L}$, for $\widehat f$ one of the frequencies $f_j$'s in the dataset, is computed to construct the reduced model. 
In particular, a~\emph{minimal} model can be constructed by employing the $k$ dominant singular vectors of $\mathbf{S}-\widehat f \mathbf{L}$ where $k$ is the rank of $\mathbf{S}-\widehat f \mathbf{L}$. If the full SVD of $\mathbf{S}-\widehat f \mathbf{L}$ were available, the exact $k$ would be read from the singular values. However, in actual applications, where the size $N$ of the dataset can be very large, computing the full SVD is prohibitive. Therefore, a-priori estimates on $k$ need to be employed and the $k$ singular vectors are computed by iterative methods; see, e.g.,~\cite{Loewner4}. 

In the following example, for a given $k$, we compare the performance of RSVD, R-RSVD, and Rsub-RSVD when employed in the construction of reliable reduced models for the L\"owner framework.

\begin{experiment}\label{Ex3}
We consider the same data as in~\cite[Example 2]{Loewner4}. This synthetic dataset is constructed by fixing $p=q=1$, the number of poles (denoted by $n$ in~\cite{Loewner4}) equal to 10, and the signal-to-noise ratio of the random noise we added to the data is set to 100. We adopt what in~\cite{Loewner4} is called the \textsc{Odd\&Even (real)} partition of the frequencies as this achieves satisfactory approximation
results while eliminating complex arithmetic.

In exact arithmetic, the rank of $\mathbf{S}-\widehat f \mathbf{L}$ amounts to the number of poles of the system, which is 10 in this example. We thus employ RSVD, R-RSVD, and Rsub-RSVD to approximate the first $k=10$ dominant singular vectors of $\mathbf{S}-\widehat f \mathbf{L}$ where we set $\widehat f=f_1$.

We want to employ very large dataset for which it might be prohibitive to store the dense $N\times N$ L\"owner and shifted L\"owner matrices in full format. We thus adopt the strategy proposed in~\cite{Loewner4} which fully exploits the Cauchy-like structure of $\mathbf{L}$ and $\mathbf{S}$ 
in order to significantly reduce the memory requirements devoted to their allocation while speeding up the matrix-vector products involving these matrices as well.  

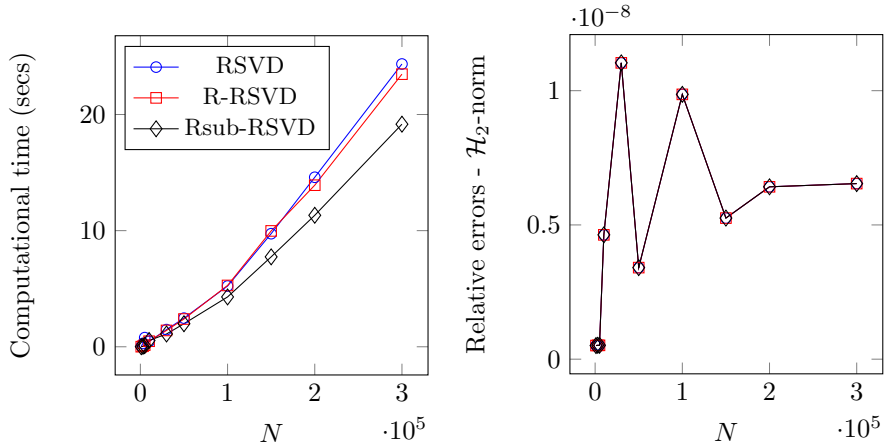
\begin{figure}[t]
  \centering
  \begin{minipage}{0.45\textwidth}
  \bigskip
  \begin{tikzpicture}
  \begin{axis}[width=.98\linewidth, height=.29\textheight,
  legend pos = north west,legend style={font={\small\arraycolsep=1pt}},
  xlabel = $N$, ylabel = Computational time (secs)]
  \addplot[mark=o,blue,solid] table[x index=0, y index=5]  {data/T2.txt};
  \addplot[mark=square,red,solid] table[x index=0, y index=7]  {data/T2.txt};
    \addplot[mark=diamond,mark size = 3pt,black,solid] table[x index=0, y index=9] {data/T2.txt};
  \legend{RSVD,R-RSVD,Rsub-RSVD};
  \end{axis}
  \end{tikzpicture}
   \end{minipage}~\begin{minipage}{0.45\textwidth}
  \centering
  \begin{tikzpicture}
  \begin{axis}[width=0.98\linewidth, height=.29\textheight,
  legend pos = north east,
  xlabel = $N$, ylabel = Relative errors - $\mathcal{H}_2$-norm]
  \addplot[mark=o,blue,solid] table[x index=0, y index=6]  {data/T2.txt};
  \addplot[mark=square,red,solid] table[x index=0, y index=8]  {data/T2.txt};
    \addplot[mark=diamond,mark size = 3pt,black,solid] table[x index=0, y index=10] {data/T2.txt};

  \end{axis}
  \end{tikzpicture}
  
\end{minipage}
 
   \caption{Example~\ref{Ex3}. RSVD (blue line with circles), R-RSVD (red line with squares), and Rsub-SVD (black line with diamonds) with parameters $k=10$, $\ell=5$, and $s=5(k+\ell)$ applied to the matrix $\mathbf{S}-\widehat f\mathbf{L}$ by varying its dimension $N$.    
   Left: Computational timings. Right: Relative error in $\mathcal{H}_2$-norm.
   }
   \label{fig:Loewner}
  \end{figure}

In Figure~\ref{fig:Loewner} we report the computational timings (left) and the relative error (right) achieved by RSVD (blue line with circles), R-RSVD (red line with squares), and Rsub-RSVD (black line with diamonds) applied to $\mathbf{S}-\widehat f\mathbf{L}$ with parameters $k=10$, $\ell=5$, and $s=5(k+\ell)$, while varying the dataset size $N$. In particular, the error reported in  Figure~\ref{fig:Loewner} (right) amounts to the relative error attained by the constructed reduced model in the $\mathcal{H}_2$-norm, namely
$$
\sqrt{\frac{\sum_{j=1}^N |\mathbf{H}_j-H(f_j)|^2}{\sum_{j=1}^N|\mathbf{H}_j|^2}}.
$$

The results in Figure~\ref{fig:Loewner} (right) show that all the routines we tested attain very similar errors in the $\mathcal{H}_2$-norm, to the point that no perceivable difference can be detected in the plot.

Figure~\ref{fig:Loewner} (left) confirms that RSVD and R-RSVD are equivalent in terms of computational cost as predicted in section~\ref{Row-aware randomized SVD}. On the other hand, Rsub-RSVD tuns out to be faster than the other two routines, especially for large $N$. Notice that the full exploitation of the Cauchy-like structure of $\mathbf{S}-\widehat f \mathbf{L}$ makes the matrix-vector products with the latter matrix extremely cheap. Nevertheless, the subsampling step employed in Rsub-RSVD is still able to lead to some computational gains.

\begin{figure}[t]
  \centering
  \begin{minipage}{0.45\textwidth}
  \centering
  \hspace{-1cm}
  \includegraphics[scale=.4]{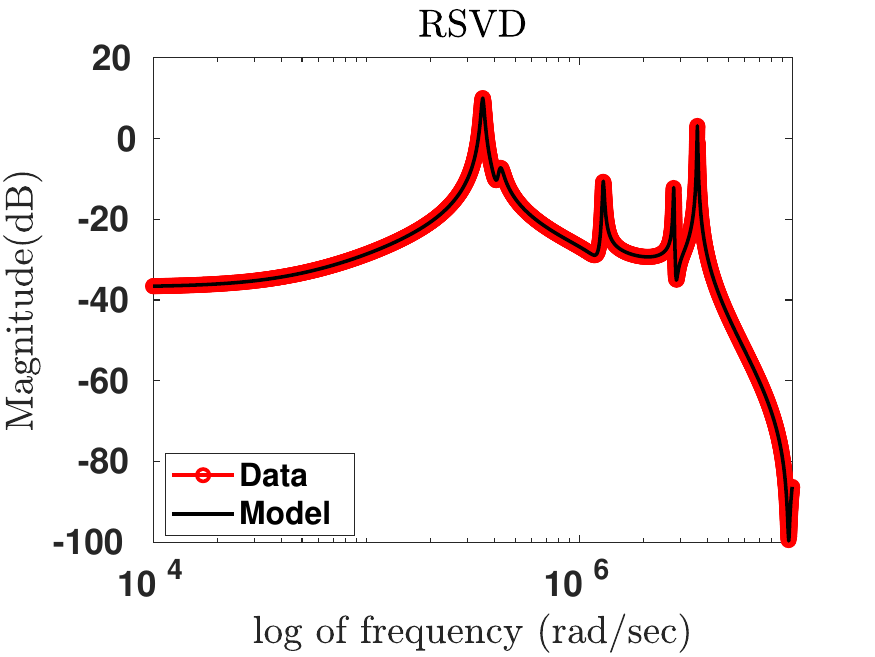} 
  \end{minipage}~\begin{minipage}{0.47\textwidth}
  \centering
  \includegraphics[scale=.4]{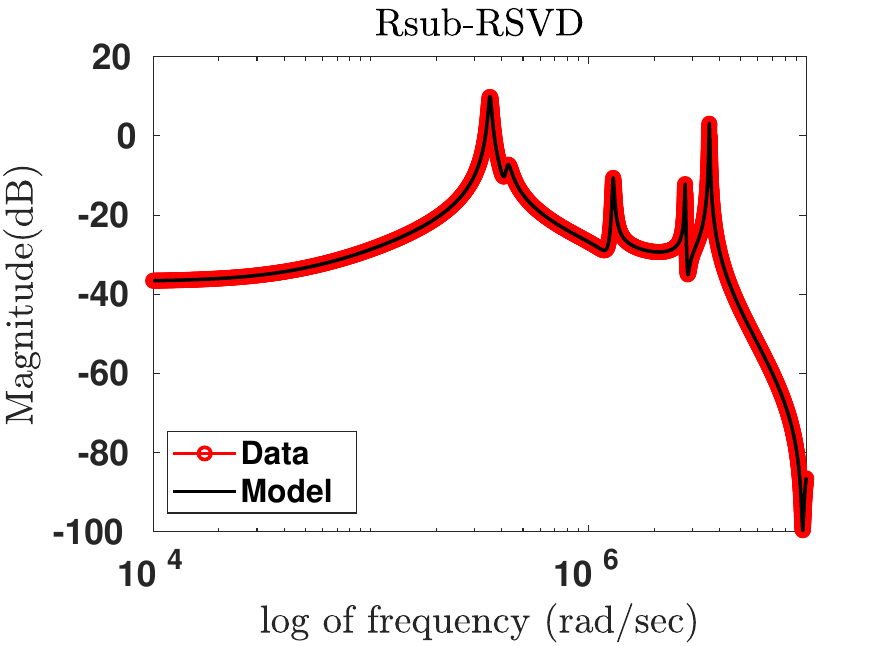} 
\end{minipage}
 
   \caption{Example~\ref{Ex3}. Frequency response from data (red line with circles) and the constructed reduced model (black solid line) for $N=100\,000$. Left:  
   RSVD. Right: Rsub-SVD. }
   \label{fig:Loewner2}
  \end{figure}

In Figure~\ref{fig:Loewner2} we also report the frequency response related to the data points (red line with circles) along with the response provided by the reduced model constructed by either RSVD (left) or Rsub-RSVD (right). We can observe how the response given by both reduced models accurately follows the considered data.

In conclusion, the Rsub-RSVD algorithm is competitive in terms of running time also when dealing with very structured matrices as the ones encountered in the L\"owner framework. Moreover, the subsampling step does not worsen the quality of the approximation whenever the subsampling parameter $s$ is chosen appropriately and the reduced model constructed by Rsub-RSVD delivers reliable frequency responses.

\end{experiment}

\section{Conclusions}\label{Conclusions}

By explicitly building information from the row space of the target matrix $\bA$, the R-RSVD algorithm, a variant of the well-established RSVD, has been studied. We derived novel bounds that show that this scheme builds better approximation to $\text{Range}(\bA)$ in general, while maintaining the same computational cost of the standard RSVD procedure. We also designed a more computationally appealing alternative to the R-RSVD, the Rsub-RSVD. In particular, the latter scheme amounts to the R-RSVD routine equipped with a subsampling step which can remarkably decrease the cost of the overall procedure while essentially preserving the accuracy of the original scheme {in practice. In principle, as in many subsampling procedures, to derive theoretical guarantees also for the Rsub-RSVD scheme we have to rely on the coherence of $\bA$. However, our results show that, thanks to the combination of subsampling and range-finder, the quantities that play a role in our bounds depend on $k$ in place of $n$.} Moreover, we illustrated how Rsub-RSVD can be considered as a single-pass algorithm for certain data streaming models.  

Our schemes have been tested in two diverse settings like the computation of CUR decompositions and the construction of reduced order models in the L\"owner framework.
In both these contexts, our routines have performed comparably well with the RSVD in terms of accuracy records. In addition, the Rsub-RSVD showed its potential in reducing the computational efforts devoted to constructing meaningful randomization-based SVD-like approximations.

\subsection*{Acknowledgments}
The authors are members of the INdAM Research Group GNCS. Their work was partially supported by the funded project GNCS2024 ``Modelli con rango basso e algoritmi di ottimizzazione per l'analisi dati'' (CUP\_E53C23001670001).

The work of the first author
was partially supported by the European Union - NextGenerationEU under the National Recovery and Resilience Plan (PNRR) - Mission 4 Education and research
- Component 2 From research to business - Investment 1.1 Notice Prin 2022 - DD N. 104 of 2/2/2022,
entitled ``Low-rank Structures and Numerical Methods in Matrix and Tensor Computations and their
Application'', code 20227PCCKZ – CUP\_J53D23003620006.

\subsection*{Conflicts of Interest}

The authors declare no conflicts of interest.

\subsection*{Data Availability Statement}

The data that support the findings of this study are available from the
corresponding author upon reasonable request.

\appendix

\renewcommand{\thesection}{Appendix \Alph{section}}
\renewcommand{\theequation}{\Alph{section}.\arabic{equation}}
\renewcommand{\theproposition}{\Alph{section}.\arabic{proposition}}
\renewcommand{\thetheorem}{\Alph{section}.\arabic{theorem}}

\stepcounter{section}

\section*{\thesection}\label{AppendixA}
To make the paper as self-contained as possible, we report here some technical lemmas used in the previous sections to derive our new results. The proofs (or references to them) can be found in \cite{Halko2010}.

\begin{theorem}[Deterministic error bound -{ \cite[Theorem 9.1]{Halko2010}}]\label{teo_det_err}
    Let $\bA$ be an $m \times n$ matrix with SVD $\bA = \mathbf{U} \mathbf{\Sigma} \mathbf{V}^*$, and fix $k \ge 0$. Choose a test matrix $\mathbf{X}$, and construct the skinny QR decomposition $\mathbf{QR} = \bA \mathbf{X}$. Partition $\mathbf{\Sigma}$ as in \eqref{partition}, and define $\mathbf{X}_1 \vcentcolon = \mathbf{V}_1^* \mathbf{X}$ and $\mathbf{X}_2 \vcentcolon = \mathbf{V}_2^* \mathbf{X}$. Assuming that $\mathbf{X}_1$ has full row rank, the approximation error satisfies
    \begin{equation}\label{deterministic_error_bound}
        \|  \bA- \mathbf{QQ}^T\bA \|^2 \leq \| \mathbf{\Sigma}_2 \|^2 + \| \mathbf{\Sigma}_2 \mathbf{X}_2 \mathbf{X}_1^\dagger \|^2,
    \end{equation}
    where $\| \cdot \|$ denotes either the spectral norm or the Frobenius norm.
\end{theorem}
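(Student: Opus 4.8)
The plan is to reconstruct the argument behind \cite[Theorem~9.1]{Halko2010}. The first ingredient is a change of coordinates that removes $\mathbf{U}$ and $\mathbf{V}$ from the picture. Since $\mathbf{QQ}^T$ is the orthogonal projector onto $\range(\bA\mathbf{X})=\mathbf{U}\cdot\range(\mathbf{\Sigma}\mathbf{V}^T\mathbf{X})$, it equals $\mathbf{U}\widetilde{\mathbf{P}}\mathbf{U}^T$, where $\widetilde{\mathbf{P}}$ is the orthogonal projector onto $\mathcal{S}:=\range(\mathbf{\Sigma}\mathbf{V}^T\mathbf{X})$; using the unitary invariance of both norms one then gets $\|\bA-\mathbf{QQ}^T\bA\|=\|\mathbf{U}(\mathbf{I}-\widetilde{\mathbf{P}})\mathbf{\Sigma}\mathbf{V}^T\|=\|(\mathbf{I}-\widetilde{\mathbf{P}})\mathbf{\Sigma}\|$. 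Writing $\mathbf{V}^T\mathbf{X}$ conformally with the partition~\eqref{partition} as a block matrix with blocks $\mathbf{X}_1$ (top) and $\mathbf{X}_2$ (bottom), we have $\mathbf{\Sigma}\mathbf{V}^T\mathbf{X}=\begin{bmatrix}\mathbf{\Sigma}_1\mathbf{X}_1\\\mathbf{\Sigma}_2\mathbf{X}_2\end{bmatrix}$, so the task reduces to bounding $\|(\mathbf{I}-\widetilde{\mathbf{P}})\mathbf{\Sigma}\|$ using this block structure.

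The second step is to ``flatten'' the matrix $\mathbf{\Sigma}\mathbf{V}^T\mathbf{X}$. Assuming first $\sigma_k>0$, the factor $\mathbf{\Sigma}_1$ is invertible, the product $\mathbf{\Sigma}_1\mathbf{X}_1$ inherits the full row rank of $\mathbf{X}_1$, and the reverse-order law gives $(\mathbf{\Sigma}_1\mathbf{X}_1)^\dagger=\mathbf{X}_1^\dagger\mathbf{\Sigma}_1^{-1}$. Right-multiplying $\mathbf{\Sigma}\mathbf{V}^T\mathbf{X}$ by this pseudoinverse produces $\mathbf{N}:=\begin{bmatrix}\mathbf{I}\\\mathbf{F}\end{bmatrix}$ with $\mathbf{F}:=\mathbf{\Sigma}_2\mathbf{X}_2\mathbf{X}_1^\dagger\mathbf{\Sigma}_1^{-1}$, using $(\mathbf{\Sigma}_1\mathbf{X}_1)(\mathbf{\Sigma}_1\mathbf{X}_1)^\dagger=\mathbf{I}$. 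Then $\range(\mathbf{N})\subseteq\mathcal{S}$, and the monotonicity of residual projections (enlarging the approximation subspace cannot increase the residual, in either norm) yields $\|(\mathbf{I}-\widetilde{\mathbf{P}})\mathbf{\Sigma}\|\le\|(\mathbf{I}-\mathbf{P}_{\mathbf{N}})\mathbf{\Sigma}\|$, so it remains to control the right-hand side. The degenerate case $\sigma_k=0$ is handled directly: then $\mathbf{\Sigma}_2$ vanishes, while the full-row-rank hypothesis on $\mathbf{X}_1$ forces $\range(\bA\mathbf{X})=\range(\bA)$, hence $\bA=\mathbf{QQ}^T\bA$ and the bound is trivial.

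For the last step I would exhibit the residual projector explicitly. The subspace $\range(\mathbf{N})^\perp$ is spanned by the columns of $\begin{bmatrix}-\mathbf{F}^T\\\mathbf{I}\end{bmatrix}$, so $\mathbf{I}-\mathbf{P}_{\mathbf{N}}=\mathbf{W}\mathbf{W}^T$ with $\mathbf{W}:=\begin{bmatrix}-\mathbf{F}^T\\\mathbf{I}\end{bmatrix}(\mathbf{I}+\mathbf{F}\mathbf{F}^T)^{-1/2}$; the columns of $\mathbf{W}$ are orthonormal, which is just the identity $(\mathbf{I}+\mathbf{F}\mathbf{F}^T)^{-1/2}(\mathbf{I}+\mathbf{F}\mathbf{F}^T)(\mathbf{I}+\mathbf{F}\mathbf{F}^T)^{-1/2}=\mathbf{I}$. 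Computing $\mathbf{W}^T\mathbf{\Sigma}$ and using that left multiplication by $\mathbf{W}$ preserves both norms gives $\|(\mathbf{I}-\mathbf{P}_{\mathbf{N}})\mathbf{\Sigma}\|=\|(\mathbf{I}+\mathbf{F}\mathbf{F}^T)^{-1/2}\begin{bmatrix}-\mathbf{F}\mathbf{\Sigma}_1 & \mathbf{\Sigma}_2\end{bmatrix}\|$. Squaring, splitting the two block columns---with equality for the Frobenius norm and with $\|[\mathbf{B}_1,\mathbf{B}_2]\|_2^2=\|\mathbf{B}_1\mathbf{B}_1^T+\mathbf{B}_2\mathbf{B}_2^T\|_2\le\|\mathbf{B}_1\|_2^2+\|\mathbf{B}_2\|_2^2$ for the spectral norm---and invoking $\|(\mathbf{I}+\mathbf{F}\mathbf{F}^T)^{-1/2}\|_2\le1$, one obtains $\|(\mathbf{I}-\mathbf{P}_{\mathbf{N}})\mathbf{\Sigma}\|^2\le\|\mathbf{F}\mathbf{\Sigma}_1\|^2+\|\mathbf{\Sigma}_2\|^2$. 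Since $\mathbf{F}\mathbf{\Sigma}_1=\mathbf{\Sigma}_2\mathbf{X}_2\mathbf{X}_1^\dagger$, this is exactly~\eqref{deterministic_error_bound}.

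I expect the main obstacle to be the projector bookkeeping in the third step: correctly identifying $\range(\mathbf{N})^\perp$, verifying the orthonormality of the columns of $\mathbf{W}$, and carrying the block-column norm estimate through simultaneously for the spectral and Frobenius norms. The coordinate reduction and the monotonicity lemma are routine, but care is needed with the reverse-order law $(\mathbf{\Sigma}_1\mathbf{X}_1)^\dagger=\mathbf{X}_1^\dagger\mathbf{\Sigma}_1^{-1}$ (valid because $\mathbf{\Sigma}_1$ has full column rank and $\mathbf{X}_1$ full row rank) and with the degenerate case $\sigma_k=0$, where the claimed bound still holds but the flattening is unavailable.
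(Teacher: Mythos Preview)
Your proposal is correct and reconstructs precisely the argument of \cite[Theorem~9.1]{Halko2010}: the unitary reduction to $\|(\mathbf{I}-\widetilde{\mathbf{P}})\mathbf{\Sigma}\|$, the flattening via $(\mathbf{\Sigma}_1\mathbf{X}_1)^\dagger$ combined with the monotonicity of residual projections, and the explicit block computation of $\mathbf{I}-\mathbf{P}_{\mathbf{N}}$ are exactly the steps used there. The present paper does not supply its own proof of this statement---it only records the result in the Appendix and defers to~\cite{Halko2010}---so there is nothing further to compare against.
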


\begin{proposition}[Expected norm of a scaled Gaussian matrix -{ \cite[Proposition~10.1]{Halko2010}}]\label{Expected_norm_Gaussian_matrix}
    Fix two matrices $\mathbf{S},\mathbf{T}$, and draw a standard Gaussian matrix $\mathbf{G}$. Then
    \begin{equation}\label{Expected_Frobenius_norm_Gaussian_matrix}
        \left ( \E \left [ \| \mathbf{SGT} \|_F^2 \right ] \right )^{\frac{1}{2}} = \| \mathbf{S} \|_F \| \mathbf{T} \|_F,
    \end{equation}
    and
    \begin{equation}\label{Expected_spectral_norm_Gaussian_matrix}
        \E \left [ \| \mathbf{SGT} \|_2 \right ] \leq \| \mathbf{S} \|_2 \| \mathbf{T} \|_F + \| \mathbf{S} \|_F \| \mathbf{T} \|_2.
    \end{equation}
\end{proposition}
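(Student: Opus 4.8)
The plan is to first normalize the problem by exploiting the rotational invariance of the standard Gaussian distribution, reducing to the case of diagonal $\mathbf{S}$ and $\mathbf{T}$, and then to treat the two norms separately: the Frobenius identity becomes a one-line entrywise computation, while the spectral bound follows from a Gaussian comparison argument.

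First I would take SVDs $\mathbf{S} = \mathbf{U}_S\mathbf{\Sigma}_S\mathbf{V}_S^T$ and $\mathbf{T} = \mathbf{U}_T\mathbf{\Sigma}_T\mathbf{V}_T^T$ and write $\mathbf{SGT} = \mathbf{U}_S\mathbf{\Sigma}_S(\mathbf{V}_S^T\mathbf{G}\,\mathbf{U}_T)\mathbf{\Sigma}_T\mathbf{V}_T^T$. Since $\mathbf{G}' := \mathbf{V}_S^T\mathbf{G}\,\mathbf{U}_T$ is again a standard Gaussian matrix and unitary factors change neither the spectral nor the Frobenius norm, it suffices to prove both statements with $\mathbf{S}$ and $\mathbf{T}$ replaced by the diagonal matrices $\mathbf{\Sigma}_S = \mathrm{diag}(\sigma_1,\dots)$ and $\mathbf{\Sigma}_T = \mathrm{diag}(\tau_1,\dots)$ of their singular values, so that $(\mathbf{\Sigma}_S\mathbf{G}'\mathbf{\Sigma}_T)_{ij} = \sigma_i\tau_j G'_{ij}$. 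The Frobenius identity \eqref{Expected_Frobenius_norm_Gaussian_matrix} is then immediate: the $G'_{ij}$ are independent standard normal, so $\E[\|\mathbf{\Sigma}_S\mathbf{G}'\mathbf{\Sigma}_T\|_F^2] = \sum_{i,j}\sigma_i^2\tau_j^2\,\E[(G'_{ij})^2] = \big(\sum_i\sigma_i^2\big)\big(\sum_j\tau_j^2\big) = \|\mathbf{S}\|_F^2\|\mathbf{T}\|_F^2$, and taking square roots gives the claim.

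For the spectral estimate \eqref{Expected_spectral_norm_Gaussian_matrix} I would write the norm as the supremum of a centered Gaussian process. Setting $a := \mathbf{\Sigma}_S^T y$ and $b := \mathbf{\Sigma}_T x$,
\[
\|\mathbf{\Sigma}_S\mathbf{G}'\mathbf{\Sigma}_T\|_2 = \sup_{\|x\|=\|y\|=1}X_{x,y},\qquad X_{x,y} := \langle \mathbf{G}'b,\,a\rangle = \sum_{i,j}G'_{ij}a_i b_j,
\]
and compare $X$ against the auxiliary Gaussian process $Y_{x,y} := \|b\|\,\langle g,a\rangle + \|a\|\,\langle h,b\rangle$, with $g,h$ independent standard Gaussian vectors. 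A direct expansion gives $\E[(X_{x,y}-X_{x',y'})^2] = \|ab^T - a'b'^T\|_F^2$ and $\E[(Y_{x,y}-Y_{x',y'})^2] = \|\,\|b\|\,a - \|b'\|\,a'\,\|^2 + \|\,\|a\|\,b - \|a'\|\,b'\,\|^2$; the increment comparison $\E[(X_{x,y}-X_{x',y'})^2] \le \E[(Y_{x,y}-Y_{x',y'})^2]$ needed for the Sudakov--Fernique inequality follows from the AM--GM bound $\|a\|^2\|b\|^2 + \|a'\|^2\|b'\|^2 \ge 2\|a\|\|a'\|\|b\|\|b'\|$ together with the Cauchy--Schwarz fact that $(\|a\|\|a'\| - \langle a,a'\rangle)(\|b\|\|b'\| - \langle b,b'\rangle) \ge 0$. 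Sudakov--Fernique then yields $\E\sup_{x,y}X_{x,y} \le \E\sup_{x,y}Y_{x,y}$. Finally, for $\|x\|,\|y\|\le 1$ one has $\|a\|\le\|\mathbf{S}\|_2$, $\|b\|\le\|\mathbf{T}\|_2$, $\langle g,a\rangle\le\|\mathbf{\Sigma}_S g\|$ and $\langle h,b\rangle\le\|\mathbf{\Sigma}_T h\|$, hence $\sup_{x,y}Y_{x,y} \le \|\mathbf{T}\|_2\,\|\mathbf{\Sigma}_S g\| + \|\mathbf{S}\|_2\,\|\mathbf{\Sigma}_T h\|$; taking expectations and applying Jensen ($\E\|\mathbf{\Sigma}_S g\|\le(\E\|\mathbf{\Sigma}_S g\|^2)^{1/2} = \|\mathbf{S}\|_F$, likewise for $\mathbf{T}$) gives $\E\|\mathbf{SGT}\|_2 \le \|\mathbf{S}\|_2\|\mathbf{T}\|_F + \|\mathbf{S}\|_F\|\mathbf{T}\|_2$.

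The only genuinely delicate step is the increment comparison feeding Sudakov--Fernique; the remainder is bookkeeping. Alternatively, one could bypass the explicit computation by quoting Chevet's inequality for the operator norm of a Gaussian matrix, applied to the ellipsoids $\{\mathbf{\Sigma}_T x : \|x\|\le 1\}$ and $\{\mathbf{\Sigma}_S^T y : \|y\|\le 1\}$, whose radii are $\|\mathbf{T}\|_2$ and $\|\mathbf{S}\|_2$ and whose Gaussian widths are bounded (again via Jensen) by $\|\mathbf{T}\|_F$ and $\|\mathbf{S}\|_F$; this packages exactly the same comparison argument into a citable statement.
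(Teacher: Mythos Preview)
The paper does not actually prove this proposition: it is quoted verbatim in the Appendix from \cite{Halko2010} with the remark that ``the proofs (or references to them) can be found in~\cite{Halko2010}.'' So there is no in-paper argument to compare against. Your proof is correct and is precisely the standard one: in \cite{Halko2010} the Frobenius identity is obtained by the same entrywise computation after the SVD reduction, and the spectral bound is deduced from Gordon's/Chevet's Gaussian comparison inequality, which is exactly the Sudakov--Fernique argument you spell out (your increment check and the AM--GM/Cauchy--Schwarz verification are the right way to make it self-contained).

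One small remark: the statement as printed in the paper has an equality in \eqref{Expected_spectral_norm_Gaussian_matrix}, but the correct relation---and the one you prove---is the inequality $\E\|\mathbf{SGT}\|_2 \le \|\mathbf{S}\|_2\|\mathbf{T}\|_F + \|\mathbf{S}\|_F\|\mathbf{T}\|_2$; the ``$=$'' is a typo carried over from the paper's transcription.
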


\begin{proposition}[Expected norm of a pseudoinverted Gaussian matrix -{ \cite[Proposition 10.2]{Halko2010}}]\label{Expected_norm_pseudoinverse_Gaussian_matrix}
    Draw a $k \times (k+\ell)$ standard Gaussian matrix $\mathbf{G}$ with $k \ge 2$ and $\ell \ge 2$. Then
    \begin{equation}\label{Expected_Frobenius_norm_pseudoinverse_Gaussian_matrix}
        \left ( \E \left [ \| \mathbf{G}^\dagger \|_F^2 \right ] \right )^{\frac{1}{2}} = \sqrt{\frac{k}{\ell-1}}
    \end{equation}
    and
    \begin{equation}\label{Expected_spectral_norm_pseudoinverse_Gaussian_matrix}
        \E \left [ \| \mathbf{G}^\dagger \|_2^2 \right ] \leq \frac{\mathtt e \sqrt{k+\ell}}{\ell}.
    \end{equation}
\end{proposition}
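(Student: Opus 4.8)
The plan is to reduce both quantities to the spectrum of the Wishart matrix $\mathbf{W} := \mathbf{G}\mathbf{G}^T$, which is $k \times k$ and invertible with probability one since the wide Gaussian $\mathbf{G}$ has full row rank almost surely. Writing $\mathbf{G}^\dagger = \mathbf{G}^T \mathbf{W}^{-1}$, one has $\| \mathbf{G}^\dagger \|_F^2 = \trace(\mathbf{W}^{-1})$ and $\| \mathbf{G}^\dagger \|_2^2 = \lambda_{\max}(\mathbf{W}^{-1}) = \lambda_{\min}(\mathbf{W})^{-1}$. Thus~\eqref{Expected_Frobenius_norm_pseudoinverse_Gaussian_matrix} becomes an \emph{exact} evaluation of the expected trace of an inverse Wishart, whereas~\eqref{Expected_spectral_norm_pseudoinverse_Gaussian_matrix} becomes a bound on the expected reciprocal of the smallest eigenvalue of $\mathbf{W}$, equivalently on the negative second moment of the smallest singular value of $\mathbf{G}$. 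Keeping these two objects separate from the outset is the organizing idea, since the entrywise argument that settles the Frobenius case has no analogue for the spectral one.

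For the Frobenius identity I would compute the expected diagonal entries of $\mathbf{W}^{-1}$ one at a time and sum. By the Gram-matrix inversion formula, $[\mathbf{W}^{-1}]_{ii} = d_i^{-2}$, where $d_i$ is the distance from the $i$-th row $\mathbf{g}_i$ of $\mathbf{G}$ to the span of the remaining $k-1$ rows. By rotational invariance of the Gaussian law, conditioning on those rows fixes a subspace of dimension at most $k-1$, and $d_i^2$ is the squared norm of the orthogonal projection of the independent Gaussian vector $\mathbf{g}_i \in \mathbb{R}^{k+\ell}$ onto the orthogonal complement, a subspace of dimension $(k+\ell)-(k-1) = \ell+1$. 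Hence $d_i^2 \sim \chi^2_{\ell+1}$ and $\E[d_i^{-2}] = \E[1/\chi^2_{\ell+1}] = \frac{1}{(\ell+1)-2} = \frac{1}{\ell-1}$, which is finite precisely because $\ell \ge 2$. Summing the $k$ diagonal entries gives $\E[\| \mathbf{G}^\dagger \|_F^2] = \frac{k}{\ell-1}$, and taking square roots yields~\eqref{Expected_Frobenius_norm_pseudoinverse_Gaussian_matrix} as an equality.

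The spectral bound is the main obstacle, because $\lambda_{\min}(\mathbf{W})^{-1}$ is governed by the lower tail of the smallest singular value and cannot be read off entrywise; indeed, the stated bound $\frac{\mathtt e \sqrt{k+\ell}}{\ell}$ is far smaller than the crude estimate $\| \mathbf{G}^\dagger \|_2^2 \le \| \mathbf{G}^\dagger \|_F^2 = \frac{k}{\ell-1}$ would give, so a genuine extreme-value analysis is unavoidable. The plan is to control the upper tail of $\| \mathbf{G}^\dagger \|_2$ by an anti-concentration (small-ball) estimate for the smallest singular value of a wide Gaussian matrix, of the form $\P\{ \| \mathbf{G}^\dagger \|_2 \ge \tfrac{\mathtt e \sqrt{k+\ell}}{\ell+1}\,\tau \} \le \tau^{-(\ell+1)}$ for $\tau \ge 1$, and then to recover the second moment by tail integration, $\E[\| \mathbf{G}^\dagger \|_2^2] = \int_0^\infty \P\{ \| \mathbf{G}^\dagger \|_2^2 > s \}\,\d s$. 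Splitting the integral at the threshold $s_0 = (\tfrac{\mathtt e \sqrt{k+\ell}}{\ell+1})^2$, bounding the part below $s_0$ by $s_0$ itself and the part above by the power-law tail, the surviving integral $\int s^{-(\ell+1)/2}\,\d s$ converges exactly when $\ell \ge 2$, matching the hypothesis. I expect the delicate point to be the constant $\mathtt e$: the $(\ell+1)$-st power decay reflects the density of the smallest singular value behaving like $\sigma^\ell$ near the origin, and the Napier constant enters when optimizing a free scale parameter (via an inequality such as $(1+1/\ell)^\ell \le \mathtt e$). Verifying that the split assembles to exactly $\frac{\mathtt e \sqrt{k+\ell}}{\ell}$ rather than a slightly larger multiple, and hence that the precise small-ball estimate (not a generic polynomial tail) is used, will be the most technical step.
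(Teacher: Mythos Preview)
The paper does not prove this proposition; it is quoted verbatim from \cite{Halko2010} with the remark in the Appendix that ``proofs (or references to them) can be found'' there. So there is no in-paper proof to compare against.

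Your Frobenius argument is correct and is essentially one standard derivation of the inverse-Wishart trace formula; HMT cite the identity $\E[\trace(\mathbf{W}^{-1})]=k/(\ell-1)$ directly from the Wishart literature rather than rederiving it, but the content is the same.

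For the spectral bound there is a gap, but it originates in a typo in the statement rather than in your strategy. The displayed inequality~\eqref{Expected_spectral_norm_pseudoinverse_Gaussian_matrix} should read $\E[\|\mathbf{G}^\dagger\|_2]\le \mathtt e\sqrt{k+\ell}/\ell$, i.e.\ a first-moment bound; this is what appears in \cite[Prop.~10.2]{Halko2010} and is exactly how the proposition is invoked in the proof of Theorem~\ref{teo1} of the present paper. Your tail-integration plan recovers precisely that first-moment bound: with $c=\mathtt e\sqrt{k+\ell}/(\ell+1)$ and the tail $\P\{\|\mathbf{G}^\dagger\|_2\ge c\tau\}\le\tau^{-(\ell+1)}$ one gets
\[
\E[\|\mathbf{G}^\dagger\|_2]\le c\Bigl(1+\int_1^\infty \tau^{-(\ell+1)}\d\tau\Bigr)=c\cdot\frac{\ell+1}{\ell}=\frac{\mathtt e\sqrt{k+\ell}}{\ell},
\]
so the constant falls out exactly and your ``most technical step'' is in fact a one-line computation. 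For the \emph{second} moment, however, the same integration yields a bound of order $c^2(\ell+1)/(\ell-1)\asymp (k+\ell)/\ell^2$, which is the wrong scale entirely; so the inequality as literally printed cannot be obtained this way (and indeed is not what HMT prove). Once you correct the exponent, your approach is sound; HMT themselves derive the bound by integrating a density estimate for the smallest singular value rather than by first establishing the tail of Proposition~\ref{bound_pseudoinverse_Gaussian_matrix}, but the two routes are equivalent and neither is circular.
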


\begin{proposition}[Concentration for functions of a Gaussian matrix -{ \cite[Proposition 10.3]{Halko2010}}]\label{concentration_functions}
    Suppose that $h$ is a Lipschitz function such that
    \begin{equation}
        | h(\mathbf{X}) - h(\mathbf{Y}) | \leq L \| \mathbf{X} - \mathbf{Y} \|_F \quad \text{for all }  \mathbf{X}, \mathbf{Y}.
    \end{equation}
    Draw a standard Gaussian matrix $\mathbf{G}$. Then
    \begin{equation}
        \P \left (  h(\mathbf{G}) \ge \E \left [  h(\mathbf{G}) \right ] + L t \right ) \leq \mathtt e^{-\frac{t^2}{2}}.
    \end{equation}
\end{proposition}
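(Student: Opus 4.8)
The plan is to recognize the statement as the classical Gaussian concentration inequality for Lipschitz functions and to prove it by reducing to Euclidean space and then running the Herbst (log-Sobolev) argument. First I would identify the space of $p\times q$ matrices with $\R^{N}$, $N=pq$, through vectorization; this map is an isometry between the Frobenius norm and the Euclidean norm, so a standard Gaussian matrix $\mathbf{G}$ becomes a standard Gaussian vector and $h$ becomes an $L$-Lipschitz function on $\R^{N}$ for the Euclidean norm. After rescaling we may assume $L=1$, and a mollification step (convolving $h$ with an arbitrarily narrow Gaussian kernel) reduces to the case where $h$ is smooth with $\|\nabla h\|_{2}\le 1$ pointwise, the final bound for the smoothed functions passing to the limit. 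Since a $1$-Lipschitz function grows at most linearly, $H(\lambda):=\E\!\left[e^{\lambda(h(\mathbf{G})-\E[h(\mathbf{G})])}\right]$ is finite and smooth in $\lambda$, with $H(0)=1$ and $H'(0)=0$.

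The analytic heart of the argument is the Gaussian logarithmic Sobolev inequality: for every sufficiently regular $f$ on $\R^{N}$,
\begin{equation*}
\E\!\left[f^{2}\log f^{2}\right]-\E\!\left[f^{2}\right]\log\E\!\left[f^{2}\right]\ \le\ 2\,\E\!\left[\|\nabla f\|_{2}^{2}\right],
\end{equation*}
the expectations being taken with respect to the standard Gaussian measure. I would either take this as known or sketch a self-contained proof of it: establish the scalar case first (for instance from the two-point inequality on $\{-1,1\}$ combined with the central limit theorem, or via the Ornstein--Uhlenbeck semigroup and the Bakry--\'Emery curvature criterion) and then upgrade to $\R^{N}$ by the subadditivity (tensorization) of entropy, which leaves the constant unchanged.

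Next I would run the Herbst argument. Applying the log-Sobolev inequality to $f=e^{\lambda g/2}$ with $g:=h-\E[h]$ and $\lambda>0$, and using $\|\nabla f\|_{2}^{2}=\tfrac{\lambda^{2}}{4}\|\nabla g\|_{2}^{2}e^{\lambda g}\le\tfrac{\lambda^{2}}{4}e^{\lambda g}$, one obtains $\mathrm{Ent}(e^{\lambda g})\le\tfrac{\lambda^{2}}{2}H(\lambda)$. Writing the entropy as $\lambda H'(\lambda)-H(\lambda)\log H(\lambda)$ and rearranging shows that $\varphi(\lambda):=\lambda^{-1}\log H(\lambda)$ satisfies $\varphi'(\lambda)\le\tfrac12$; since $\varphi(0^{+})=H'(0)/H(0)=\E[g]=0$, integrating from $0$ gives $H(\lambda)\le e^{\lambda^{2}/2}$ for every $\lambda>0$. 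A Chernoff bound then yields $\P(g(\mathbf{G})\ge t)\le\inf_{\lambda>0}e^{\lambda^{2}/2-\lambda t}=e^{-t^{2}/2}$ for $t\ge0$, and undoing the normalization $L=1$ and the mollification recovers the claimed inequality $\P(h(\mathbf{G})\ge\E[h(\mathbf{G})]+Lt)\le e^{-t^{2}/2}$.

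The only genuinely nontrivial ingredient is the Gaussian log-Sobolev inequality; everything else is bookkeeping, and the main obstacle is therefore producing (or being willing to cite) that inequality together with the mild regularization needed to make the gradient manipulations legitimate for merely Lipschitz $h$. If one prefers to avoid log-Sobolev, an alternative route is to invoke the Gaussian isoperimetric inequality, which directly gives the same tail around a \emph{median} of $h$, and then to bound the gap between the median and the mean by $O(L)$ using that very tail estimate; this is slightly less clean since it does not recover the mean with the sharp constant, so I would favour the log-Sobolev approach for the stated form.
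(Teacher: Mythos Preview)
Your argument is correct: the reduction to $\R^N$ via vectorization, the mollification step, the application of the Gaussian log-Sobolev inequality to $f=e^{\lambda g/2}$, the Herbst differential inequality $\varphi'(\lambda)\le\tfrac12$, and the final Chernoff bound are all standard and carried out without error.

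As for comparison with the paper: there is nothing to compare. The paper does not prove this proposition at all; it is listed in the Appendix as a technical lemma quoted verbatim from \cite[Proposition~10.3]{Halko2010}, with the remark that ``the proofs (or references to them) can be found in \cite{Halko2010}.'' In the Halko--Martinsson--Tropp paper itself the result is likewise not proved but attributed to standard references on Gaussian measures (Bogachev, Ledoux). Your log-Sobolev/Herbst route is exactly one of the canonical proofs that those references contain, so you have supplied what the paper deliberately omits. The alternative isoperimetric route you mention would indeed give concentration around the median rather than the mean and would need an extra step to recover the stated form, so your choice of the log-Sobolev approach is the appropriate one here.
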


\begin{proposition}[Norm bounds for a pseudoinverted Gaussian matrix -{ \cite[Proposition 10.4]{Halko2010}}]\label{bound_pseudoinverse_Gaussian_matrix}
    Let $\mathbf{G}$ be a $k \times (k+\ell)$ Gaussian matrix where $\ell \ge 4$. For all $t \ge 1$,
    \begin{equation}\label{Frobenius_bound_pseudoinverse_Gaussian_matrix}
        \P \left ( \| \mathbf{G}^\dagger \|_F \ge \sqrt{\frac{3k}{\ell+1}} \cdot t \right ) \leq t^{-\ell},
    \end{equation}
    and
    \begin{equation}\label{spectral_bound_pseudoinverse_Gaussian_matrix}
        \P \left ( \| \mathbf{G}^\dagger \|_2 \ge \frac{\mathtt e \sqrt{k+\ell}}{\ell+1} \cdot t \right ) \leq t^{-(\ell+1)}.
    \end{equation}
\end{proposition}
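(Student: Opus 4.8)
This is Proposition~10.4 of \cite{Halko2010}, recorded here for completeness; below I only outline how one would argue, and then defer to \cite{Halko2010} for the optimized constants. The idea is to treat the two tails separately, in each case reducing the claim to a distributional fact about the Wishart matrix $\mathbf{G}\mathbf{G}^*$.

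\textbf{Frobenius tail.} Almost surely $\mathbf{G}$ has full row rank, so $\mathbf{G}^\dagger=\mathbf{G}^*(\mathbf{G}\mathbf{G}^*)^{-1}$ and $\|\mathbf{G}^\dagger\|_F^2=\trace\!\big((\mathbf{G}\mathbf{G}^*)^{-1}\big)$, with $\mathbf{G}\mathbf{G}^*$ a $k\times k$ Wishart matrix with $k+\ell$ degrees of freedom. Through the Bartlett (Cholesky) factorization $\mathbf{G}\mathbf{G}^*=\mathbf{L}\mathbf{L}^*$, the squared diagonal entries $\mathbf{L}_{jj}^2$ are independent chi-square variables with $k+\ell+1-j$ degrees of freedom (so the most delicate one, $\mathbf{L}_{kk}^2$, has exactly $\ell+1$), the strict lower part of $\mathbf{L}$ is i.i.d.\ standard normal, and $\|\mathbf{G}^\dagger\|_F^2=\|\mathbf{L}^{-1}\|_F^2$. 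Expanding the inverse triangular factor and using that the negative moments of these chi-square variables are finite up to order close to $\ell/2$, one applies Markov's inequality to the $\ell$-th moment of $\|\mathbf{G}^\dagger\|_F$; the condition $\ell\ge4$ is precisely what makes this moment finite with a numerical value compatible with the target constant $\sqrt{3k/(\ell+1)}$ — itself consistent with the first-moment identity $\E\|\mathbf{G}^\dagger\|_F^2=k/(\ell-1)$ of Proposition~\ref{Expected_norm_pseudoinverse_Gaussian_matrix} — and the $\ell$-th-moment Markov bound then collapses to $t^{-\ell}$.

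\textbf{Spectral tail.} Again by full row rank $\|\mathbf{G}^\dagger\|_2=\sigma_{\min}(\mathbf{G})^{-1}$, so \eqref{spectral_bound_pseudoinverse_Gaussian_matrix} is a left-tail estimate for the smallest singular value of the wide Gaussian $\mathbf{G}$. A crude route — bounding $\|\mathbf{G}^\dagger\|_2^2\le\trace((\mathbf{G}\mathbf{G}^*)^{-1})=\sum_j d_j^{-2}$, where $d_j=\dist(\mathbf{g}_j,\Span\{\mathbf{g}_i:i\neq j\})$ is the distance from the $j$-th row of $\mathbf{G}$ to the span of the others and is the norm of a standard Gaussian vector projected onto a random $(\ell+1)$-dimensional subspace, i.e.\ a $\chi_{\ell+1}$ variable, and then using $\P(\chi_{\ell+1}\le\varepsilon)=O(\varepsilon^{\ell+1})$ together with a union bound over $j$ — already delivers a tail of the correct order $t^{-(\ell+1)}$, but with a $k$-dependent prefactor. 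Recovering the sharp constant $\mathtt e\sqrt{k+\ell}/(\ell+1)$ requires instead the exact density of the smallest singular value of a rectangular Gaussian matrix, which is the path taken in \cite{Halko2010}.

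\textbf{Main obstacle.} The qualitative polynomial decay, with exponents dictated by the $\ell+1$ excess degrees of freedom, follows readily from the chi-square/Wishart structure; the genuinely delicate part is the sharp numerical constants $\sqrt{3k/(\ell+1)}$ and $\mathtt e\sqrt{k+\ell}/(\ell+1)$ — in particular, eliminating the spurious $k$-dependence that a naive union bound introduces in the spectral case, and pinning down the $\ell$-th moment of $\|\mathbf{G}^\dagger\|_F$ in the Frobenius case — which needs the exact chi-square densities and the exact law of $\sigma_{\min}$ of a Gaussian matrix rather than crude moment or net arguments. As this is a standard result we simply invoke \cite[Proposition~10.4]{Halko2010}.
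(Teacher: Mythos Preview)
The paper does not prove this proposition at all: it is listed in the Appendix as one of several technical lemmas imported verbatim from \cite{Halko2010}, with the blanket remark ``The proofs (or references to them) can be found in~\cite{Halko2010}.'' Your proposal therefore already goes further than the paper, and the sketch you give --- Bartlett/Wishart moments plus Markov for the Frobenius tail, the exact smallest-singular-value density for the spectral tail --- is indeed the route taken in \cite{Halko2010}; since both you and the paper ultimately defer to that reference, there is nothing to compare.
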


\stepcounter{section}

\section*{\thesection}\label{AppendixB}
By adapting and modifying the proofs of the propositions in Appendix A, we get the following result on non-standard Gaussian matrices.

{\begin{proposition}[Norm bounds for a pseudoinverted non-standard Gaussian matrix]\label{bound_pseudoinverse_non_standard_Gaussian_matrix}
    Let $\mathbf{G}$ be a $k \times (k+\ell)$ matrix with columns $\mathbf{G}_j \sim \mathcal{N}_{0,\mathbf{VV}^T}$, for $j = 1,\dots,k+\ell$, where $\ell \ge 4$ and $\mathbf{VV}^T$ are positive definite. For all $t \ge 1$,\begin{equation}\label{Frobenius_bound_pseudoinverse_non_standard_Gaussian_matrix}
        \P \left ( \| \mathbf{G}^\dagger \|_F \ge \sqrt{\frac{3 \cdot \| (\mathbf{V})^{\dagger} \|_F^2}{\ell+1}} \cdot t \right ) \leq t^{-\ell}.
    \end{equation}
\end{proposition}
\begin{proof}
    The matrix $\mathbf{GG}^T \in \R^{k \times k} \sim \mathcal{W}_k(\mathbf{VV}^T,k+\ell)$. Thus, if we define
    \begin{equation*}
        Z := \| \mathbf{G}^\dagger \|_F^2 = \text{trace}((\mathbf{GG}^T)^{-1}),
    \end{equation*}
    we know that
    \begin{equation*}
        Z = \sum_{i=1}^k [(\mathbf{VV}^T)^{-1}]_{ii} \cdot X_i,
    \end{equation*}
    where $X_i \sim \text{Inv-}\chi^2_{\ell+1}$, for all $i=1,\dots,k$.

    Now, we have a bound on the $p$-th moment of a $\text{Inv-}\chi^2_{\ell+1}$ random variable (see for example { \cite[Lemma A.9]{Halko2010}}, for $p := \frac{\ell}{2} \ge 2$:
    \begin{equation*}
        \E [|X_i|^p]^{\frac{1}{p}} \leq \frac{3}{\ell+1}.
    \end{equation*}
    Therefore, by the Minkowski inequality we have
    \begin{align*}
        \E [|Z|^p]^{\frac{1}{p}} &= \E \left [ \left ( \sum_{i=1}^k [(\mathbf{VV}^T)^{-1}]_{ii} \cdot X_i \right )^p\right ]^{\frac{1}{p}} \\ 
        &\leq \sum_{i=1}^k \E \left [ \left ([(\mathbf{VV}^T)^{-1}]_{ii} \cdot X_i \right )^p\right ]^{\frac{1}{p}} \\
        &\leq \frac{3}{\ell+1} \sum_{i=1}^k [(\mathbf{VV}^T)^{-1}]_{ii} = \frac{3 \cdot \| (\mathbf{V})^{\dagger} \|_F^2}{\ell+1}.
    \end{align*}
    Finally, by applying Markov's inequality we get
    \begin{align*}
    \P \left ( \| \mathbf{G}^\dagger \|_F \ge \sqrt{\frac{3 \cdot \| (\mathbf{V})^{\dagger} \|_F^2}{\ell+1}} \cdot t \right ) &=
    \P \left ( Z \ge \frac{3 \cdot \| (\mathbf{V})^{\dagger} \|_F^2}{\ell+1} \cdot t^2 \right ) \\
    &\leq \left ( \frac{3 \cdot \| (\mathbf{V})^{\dagger} \|_F^2}{\ell+1} \cdot t^2 \right )^{-p} \E [|Z|^p] \\
    &\leq t^{-2p} = t^{-\ell}.
    \end{align*}
\end{proof}}

\bibliographystyle{acm}
\bibliography{bib1}

\begin{thebibliography}{10}

\bibitem{Ailon2006}
{\sc Ailon, N., and Chazelle, B.}
\newblock {Approximate nearest neighbors and the fast Johnson-Lindenstrauss transform}.
\newblock In {\em Proceedings of the Thirty-Eighth Annual ACM Symposium on Theory of Computing\/} (2006), p.~557–563.

\bibitem{Ailon2009}
{\sc Ailon, N., and Chazelle, B.}
\newblock {The Fast Johnson–Lindenstrauss Transform and Approximate Nearest Neighbors}.
\newblock {\em SIAM Journal on Computing 39}, 1 (2009), 302--322.

\bibitem{SketchyCoreSVD}
{\sc Bajaj, C., Wang, Y., and Wang, T.}
\newblock {SketchyCoreSVD: SketchySVD from Random Subsampling of the Data Matrix}.
\newblock In {\em 2019 IEEE International Conference on Big Data (Big Data)\/} (2019), pp.~26--35.

\bibitem{Loewner}
{\sc Benner, P., Ohlberger, M., Cohen, A., and Willcox, K.}
\newblock {\em Model Reduction and Approximation}.
\newblock Society for Industrial and Applied Mathematics, Philadelphia, PA, 2017.

\bibitem{Bjarkason2019}
{\sc Bjarkason, E.~K.}
\newblock {Pass-Efficient Randomized Algorithms for Low-Rank Matrix Approximation Using Any Number of Views}.
\newblock {\em SIAM Journal on Scientific Computing 41}, 4 (2019), A2355--A2383.

\bibitem{Boutsidis2013}
{\sc Boutsidis, C., and Gittens, A.}
\newblock Improved matrix algorithms via the subsampled randomized hadamard transform.
\newblock {\em SIAM Journal on Matrix Analysis and Applications 34}, 3 (2013), 1301--1340.

\bibitem{Bou2017}
{\sc Boutsidis, C., and Woodruff, D.~P.}
\newblock {Optimal CUR Matrix Decompositions}.
\newblock {\em SIAM Journal on Computing 46}, 2 (2017), 543--589.

\bibitem{DEIM}
{\sc Chaturantabut, S., and Sorensen, D.~C.}
\newblock Discrete empirical interpolation for nonlinear model reduction.
\newblock In {\em Proceedings of the 48h IEEE Conference on Decision and Control (CDC) held jointly with 2009 28th Chinese Control Conference\/} (2009), pp.~4316--4321.

\bibitem{Coherence2}
{\sc Chen, Y., and Chi, Y.}
\newblock Spectral compressed sensing via structured matrix completion.
\newblock In {\em Proceedings of the 30th International Conference on International Conference on Machine Learning - Volume 28\/} (2013), JMLR.org.

\bibitem{cohen_et_al:LIPIcs.ICALP.2016.11}
{\sc Cohen, M.~B., Nelson, J., and Woodruff, D.~P.}
\newblock {Optimal Approximate Matrix Product in Terms of Stable Rank}.
\newblock In {\em 43rd International Colloquium on Automata, Languages, and Programming (ICALP 2016)\/} (Dagstuhl, Germany, 2016), I.~Chatzigiannakis, M.~Mitzenmacher, Y.~Rabani, and D.~Sangiorgi, Eds., vol.~55 of {\em Leibniz International Proceedings in Informatics (LIPIcs)}, Schloss Dagstuhl -- Leibniz-Zentrum f{\"u}r Informatik, pp.~11:1--11:14.

\bibitem{Cortinovisetal2020}
{\sc Cortinovis, A., Kressner, D., and Massei, S.}
\newblock On maximum volume submatrices and cross approximation for symmetric semidefinite and diagonally dominant matrices.
\newblock {\em Linear Algebra and its Applications 593\/} (2020), 251--268.

\bibitem{CortinovisColumnSelect}
{\sc Cortinovis, A., and Ying, L.}
\newblock {A sublinear-time randomized algorithm for column and row subset selection based on strong rank-revealing QR factorizations}.
\newblock Preprint, ArXiv: 2402.13975.

\bibitem{Drineas2008}
{\sc Drineas, P., Mahoney, M.~W., and Muthukrishnan, S.}
\newblock {Relative-Error \$CUR\$ Matrix Decompositions}.
\newblock {\em SIAM Journal on Matrix Analysis and Applications 30}, 2 (2008), 844--881.

\bibitem{Golub2013}
{\sc Golub, G.~H., and Van~Loan, C.~F.}
\newblock {\em Matrix computations}, fourth~ed.
\newblock Johns Hopkins Studies in the Mathematical Sciences. Johns Hopkins University Press, Baltimore, MD, 2013.

\bibitem{Halko2010}
{\sc Halko, N., Martinsson, P.~G., and Tropp, J.~A.}
\newblock {Finding Structure with Randomness: Probabilistic Algorithms for Constructing Approximate Matrix Decompositions}.
\newblock {\em SIAM Review 53}, 2 (2011), 217--288.

\bibitem{HARBRECHT2012428}
{\sc Harbrecht, H., Peters, M., and Schneider, R.}
\newblock {On the low-rank approximation by the pivoted Cholesky decomposition}.
\newblock {\em Applied Numerical Mathematics 62}, 4 (2012), 428--440.

\bibitem{SzyldMultisketch}
{\sc Higgins, A.~J., Szyld, D.~B., Boman, E.~G., and Yamazaki, I.}
\newblock {Analysis of Randomized Householder-Cholesky QR Factorization with Multisketching}.
\newblock Preprint, ArXiv: 2309.05868.

\bibitem{IonitaPhD}
{\sc Ioni{\c t}{\u a}, A.~C.}
\newblock {\em Lagrange rational interpolation and its applications to approximation of large-scale dynamical systems}.
\newblock PhD thesis, Rice University, Aug. 2013.

\bibitem{Loewner2}
{\sc Lefteriu, S., and Antoulas, A.~C.}
\newblock {A New Approach to Modeling Multiport Systems From Frequency-Domain Data}.
\newblock {\em IEEE Transactions on Computer-Aided Design of Integrated Circuits and Systems 29}, 1 (2010), 14--27.

\bibitem{MARTINSSON201147}
{\sc Martinsson, P.-G., Rokhlin, V., and Tygert, M.}
\newblock A randomized algorithm for the decomposition of matrices.
\newblock {\em Applied and Computational Harmonic Analysis 30}, 1 (2011), 47--68.

\bibitem{Martinsson_Tropp_2020}
{\sc Martinsson, P.-G., and Tropp, J.~A.}
\newblock Randomized numerical linear algebra: Foundations and algorithms.
\newblock {\em Acta Numerica 29\/} (2020), 403–572.

\bibitem{Loewner1}
{\sc Mayo, A., and Antoulas, A.}
\newblock A framework for the solution of the generalized realization problem.
\newblock {\em Linear Algebra and its Applications 425}, 2 (2007), 634--662.

\bibitem{DataStreams}
{\sc Muthukrishnan, S.}
\newblock {\em {Data Streams: Algorithms and Applications}}.
\newblock {Now Foundations and Trends}, 2005.

\bibitem{genNystrom}
{\sc Nakatsukasa, Y.}
\newblock {Fast and stable randomized low-rank matrix approximation}.
\newblock Preprint, ArXiv: 2009.11392.

\bibitem{nystrom}
{\sc Nystr{\"o}m, E.~J.}
\newblock {\"{U}ber Die Praktische Aufl\"{o}sung von Integralgleichungen mit Anwendungen auf Randwertaufgaben}.
\newblock {\em Acta Mathematica 54}, none (1930), 185 -- 204.

\bibitem{Loewner4}
{\sc Palitta, D., and Lefteriu, S.}
\newblock {An Efficient, Memory-Saving Approach for the Loewner Framework}.
\newblock {\em J Sci Comput 91}, 31 (2022).

\bibitem{Roketal2010}
{\sc Rokhlin, V., Szlam, A., and Tygert, M.}
\newblock {A Randomized Algorithm for Principal Component Analysis}.
\newblock {\em SIAM Journal on Matrix Analysis and Applications 31}, 3 (2010), 1100--1124.

\bibitem{Sarlos2006}
{\sc Sarlos, T.}
\newblock Improved approximation algorithms for large matrices via random projections.
\newblock In {\em 2006 47th Annual IEEE Symposium on Foundations of Computer Science (FOCS'06)\/} (2006), pp.~143--152.

\bibitem{Sobczyk2021}
{\sc Sobczyk, A., and Gallopoulos, E.}
\newblock Estimating leverage scores via rank revealing methods and randomization.
\newblock {\em SIAM Journal on Matrix Analysis and Applications 42}, 3 (2021), 1199--1228.

\bibitem{DEIM_CUR}
{\sc Sorensen, D.~C., and Embree, M.}
\newblock {A DEIM Induced CUR Factorization}.
\newblock {\em SIAM Journal on Scientific Computing 38}, 3 (2016), A1454--A1482.

\bibitem{Stewart99}
{\sc Stewart, G.}
\newblock {Four algorithms for the the efficient computation of truncated pivoted QR approximations to a sparse matrix}.
\newblock {\em Numer. Math. 83\/} (1999), 313--323.

\bibitem{Thurauetal}
{\sc Thurau, C., Kersting, K., and Bauckhage, C.}
\newblock {\em Deterministic CUR for Improved Large-Scale Data Analysis: An Empirical Study}.
\newblock pp.~684--695.

\bibitem{Tropp2011}
{\sc Tropp, J.~A.}
\newblock Improved analysis of the subsampled randomized hadamard transform.
\newblock {\em Advances in Adaptive Data Analysis 03}, 01n02 (2011), 115--126.

\bibitem{TroppSurvey}
{\sc Tropp, J.~A., and Webber, R.~J.}
\newblock {Randomized algorithms for low-rank matrix approximation: Design, analysis, and applications}.
\newblock Preprint, ArXiv: 2306.12418.

\bibitem{Troppetal2017}
{\sc Tropp, J.~A., Yurtsever, A., Udell, M., and Cevher, V.}
\newblock {Practical Sketching Algorithms for Low-Rank Matrix Approximation}.
\newblock {\em SIAM Journal on Matrix Analysis and Applications 38}, 4 (2017), 1454--1485.

\bibitem{Troppetal2019}
{\sc Tropp, J.~A., Yurtsever, A., Udell, M., and Cevher, V.}
\newblock Streaming low-rank matrix approximation with an application to scientific simulation.
\newblock {\em SIAM Journal on Scientific Computing 41}, 4 (2019), A2430--A2463.

\bibitem{Wang2013}
{\sc Wang, S., and Zhang, Z.}
\newblock {Improving CUR matrix decomposition and the Nystr\"{o}m approximation via adaptive sampling}.
\newblock {\em J. Mach. Learn. Res. 14}, 1 (2013), 2729–2769.

\bibitem{Woodruff2014}
{\sc Woodruff, D.~P.}
\newblock Sketching as a tool for numerical linear algebra.
\newblock {\em Foundations and Trends® in Theoretical Computer Science 10}, 1–2 (2014), 1--157.

\bibitem{Coherence1}
{\sc Xu, J., Wu, R., Zhu, K., Hajek, B., Srikant, R., and Ying, L.}
\newblock Jointly clustering rows and columns of binary matrices: algorithms and trade-offs.
\newblock In {\em The 2014 ACM International Conference on Measurement and Modeling of Computer Systems\/} (New York, NY, USA, 2014), Association for Computing Machinery, p.~29–41.

\end{thebibliography}

\end{document}